\newtheorem{theorem}{Theorem}[section]
\newtheorem{lemma}[theorem]{Lemma}
\newtheorem{prop}[theorem]{Proposition}
\newtheorem{assumption}[theorem]{Assumption}
\newtheorem{corro}[theorem]{Corollary}
\theoremstyle{definition}
\newtheorem{definition}[theorem]{Definition}
\theoremstyle{remark}
\newtheorem{remark}[theorem]{Remark}
\numberwithin{equation}{section}
\DeclareMathAlphabet{\mathsl}{OT1}{cmss}{m}{sl}
\SetMathAlphabet{\mathsl}{bold}{OT1}{cmss}{bx}{sl}
\newcommand{\al}{\ensuremath{\alpha}}
\newcommand{\be}{\ensuremath{\beta}}
\newcommand{\ga}{\ensuremath{\gamma}}
\newcommand{\de}{\ensuremath{\delta}}
\newcommand{\ze}{\ensuremath{\zeta}}
\renewcommand{\th}{\ensuremath{\theta}}
\newcommand{\ka}{\ensuremath{\kappa}}
\newcommand{\si}{\ensuremath{\sigma}}
\newcommand{\om}{\ensuremath{\omega}}
\newcommand{\ve}{\ensuremath{\varepsilon}}
\newcommand{\vt}{\ensuremath{\vartheta}}
\newcommand{\vr}{\ensuremath{\varrho}}
\newcommand{\vp}{\ensuremath{\varphi}}
\newcommand{\De}{\ensuremath{\Delta}}
\newcommand{\La}{\ensuremath{\Lambda}}
\newcommand{\Si}{\ensuremath{\Sigma}}
\newcommand{\Om}{\ensuremath{\Omega}}
\newcommand{\cB}{\ensuremath{\mathcal B}}
\newcommand{\cC}{\ensuremath{\mathcal C}}
\newcommand{\cE}{\ensuremath{\mathcal E}}
\newcommand{\cF}{\ensuremath{\mathcal F}}
\newcommand{\cG}{\ensuremath{\mathcal G}}
\newcommand{\cL}{\ensuremath{\mathcal L}}
\newcommand{\cN}{\ensuremath{\mathcal N}}
\newcommand{\cO}{\ensuremath{\mathcal O}}
\newcommand{\cP}{\ensuremath{\mathcal P}}
\newcommand{\cQ}{\ensuremath{\mathcal Q}}
\newcommand{\cS}{\ensuremath{\mathcal S}}
\newcommand{\cX}{\ensuremath{\mathcal X}}
\newcommand{\bbN}{\ensuremath{\mathbb N}}
\newcommand{\bbP}{\ensuremath{\mathbb P}}
\newcommand{\bbQ}{\ensuremath{\mathbb Q}}
\newcommand{\bbR}{\ensuremath{\mathbb R}}
\newcommand{\bbZ}{\ensuremath{\mathbb Z}}
\newcommand{\me}{\ensuremath{\mathrm{e}}}
\newcommand{\md}{\ensuremath{\mathrm{d}}}
\newcommand{\scpr}[3]{%
  \ensuremath{%
    \big\langle
      #1, #2
    \big\rangle_{\raisebox{-0ex}{$\scriptstyle\ell^{\raisebox{.1ex}{$\scriptscriptstyle 2$}} (#3)$}}
  }
}
\newcommand{\norm}[3]{%
   \ensuremath{%
     \mathchoice{\big\lVert #1 \big\rVert}
     {\lVert #1 \rVert}
     {\lVert #1 \rVert}
     {\lVert #1 \rVert}_{\raisebox{-.0ex}{$\scriptstyle\ell^{\raisebox{.2ex}{$\scriptscriptstyle #2$}} (#3)$}}
   }
}
\newcommand{\Norm}[2]{%
  \ensuremath{%
    \mathchoice{\big\lVert #1 \big\rVert}
     {\lVert #1 \rVert}
     {\lVert #1 \rVert}
     {\lVert #1 \rVert}_{\raisebox{-.0ex}{$\scriptstyle#2$}}
  }
}
\DeclareMathOperator{\mean}{\mathbb{E}}
\DeclareMathOperator{\Mean}{\mathrm{E}}
\DeclareMathOperator{\prob}{\mathbb{P}} 
\DeclareMathOperator{\Prob}{\mathrm{P}} 
\DeclareMathOperator{\supp}{\mathrm{supp}}
\DeclareMathOperator{\osr}{\mathrm{osr}}
\DeclareMathOperator{\osc}{\mathrm{osc}}
\newcommand{\av}[1]{\mathop{\mathrm{av}}(#1)}
\newcommand{\ldef}{\ensuremath{\mathrel{\mathop:}=}}
\newcommand{\indicator}{\mathbbm{1}}
\begin{document}

\title[Quenched local CLT for the dynamic RCM]{Quenched local limit theorem for random walks among time-dependent ergodic degenerate weights}


\author{Sebastian Andres}
\address{The University of Manchester}
\curraddr{Department of Mathematics,
Oxford Road, Manchester M13 9PL}
\email{sebastian.andres@manchester.ac.uk}
\thanks{}

\author{Alberto Chiarini}
\address{Eindhoven University of Technology}
\curraddr{Department of Mathematics and Computer Science, 5600 MB Eindhoven}
\email{a.chiarini@tue.nl}
\thanks{}

\author{Martin Slowik}
\address{University of Mannheim}
\curraddr{Mathematical Institute, B6, 26, 68159 Mannheim}
\email{slowik@math.uni-mannheim.de 
}
\thanks{}

\subjclass[2000]{60K37, 60F17, 82C41, 82B43}

\keywords{Random conductance model, invariance principle, percolation, isoperimetric inequality, De Giorgi's iteration. \\ No datasets were generated or analyzed during the current study.}

\date{\today}

\dedicatory{}

\begin{abstract} 
  We establish a quenched local central limit theorem for the dynamic random conductance model on $\bbZ^d$ only assuming ergodicity with respect to space-time shifts and a moment condition.  As a key analytic ingredient we show H\"older continuity estimates for solutions to the heat equation for discrete finite difference operators in divergence form with time-dependent degenerate weights.  The proof is based on De~Giorgi's iteration technique.  In addition, we also derive a quenched local central limit theorem for the static random conductance model on a class of random graphs with degenerate ergodic weights. 
\end{abstract}

\maketitle


\section{Introduction}\label{sec:INTRO}
One of the most studied models for random walks in random environments is the random conductance model (RCM).  Objectives of particular interest are homogenisation results such as invariance principles or stronger local limit theorems for the associated heat kernel.  For instance, in~\cite{ADS16} a local  limit theorem has been proven for random walks under general ergodic conductances satisfying a certain moment condition.

For the dynamic RCM evolving in a time-varying random environment a local limit theorem has been stated in \cite{An14} which required uniform ellipticity, meaning that the conductances are almost surely uniformly bounded and bounded away from zero, as well as polynomial mixing, i.e.\ the polynomial decay of the correlations of the conductances in space and time.  In this paper we significantly relax these assumptions and show a quenched local limit theorem for the dynamic RCM with degenerate space-time ergodic conductances that only need to satisfy a moment condition.  In contrast to many results on various models for random walks in dynamic random environments, in the present paper the environment is \emph{not} assumed to be uniformly elliptic or mixing or Markovian in time and we also do not require any regularity with respect to the time parameter. 

The proof exploits a quenched invariance principle established under the same assumptions in \cite{ACDS18}.  In addition and original to this paper, some H\"older continuity in the macroscopic scale for the heat kernel is required.  For the proof we extend the De~Giorgi iteration technique to discrete finite-difference divergence-form operators with time-dependent degenerate coefficients.  De~Giorgi iteration is an alternative to the well-known Moser iteration.  The latter has been implemented for the discrete graph setting in \cite{De99, ADS16}.  It turns out that the De~Giorgi's iteration method performs far more efficiently for proving H\"older regularity of time-space harmonic functions.  On one hand, it avoids the need for a parabolic Harnack inequality in contrast to the arguments in \cite{De99, ADS16}, and it also makes the proof significantly simpler and shorter. 

\subsection{Setting and main result}
\label{sec:setting_intro}
Consider the  Euclidean lattice, $(\bbZ^d, E_d)$, for $d \geq 2$, whose edge set, $E_d$, is given by the set of all non-oriented nearest neighbour bonds, that is $E_d = \{ \{x,y\} :  x,y \in \bbZ^d,\ |x-y| = 1 \}$.    The graph $(\bbZ^d, E_d)$ is endowed  with a family of time-dependent positive weights $\om \equiv \{\om_t(e) : e \in E_d,\, t \in \bbR \}$.  We refer to $\om_t(e)$ as the \emph{conductance} of an edge $e$ at time $t$.  Let $\Om$ be the set of measurable functions from $\bbR$ to $(0,\infty)^{E_d}$ equipped with a $\si$-algebra $\cF$ and let $\prob$ be a probability measure on $(\Om, \cF)$.  We  write $\mean$ for the expectation with respect to $\prob$.  Upon $\Om$ we consider the $d+1$-parameter group of translations $(\tau_{t,x} : (t,x)\in \bbR \times \bbZ^d)$ given by
\begin{align} \label{eq:shift:space-time}
  \tau_{t,x}\!:\Om \rightarrow \Om,
  \qquad
  \big\{\om_s(e) : (s,e) \in \bbR \times E_d\big\}
  \;\longmapsto\;
  \big\{\om_{t+s}(e+x) : (s,e) \in \bbR \times E_d\big\}.
\end{align}
\begin{assumption}\label{ass:P}
  \begin{enumerate}[(i)]
  \item $\prob$ is ergodic and stationary with respect to space-time shifts, that is, for all $x \in \bbZ^d$, $t\in \bbR$,  $\prob \circ\, \tau_{t,x}^{-1} \!= \prob\,$ , and $\prob[A] \in \{0,1\}\,$ for any $A \in \cF$ such that $\prob[A \triangle \tau_{t,x}(A)] = 0$ for all $x \in \bbZ^d$, $t\in \bbR$.
  
  \item For every $A \in \cF$ the mapping $(\om,t,x)\mapsto \indicator_A(\tau_{t,x}\om)$ is jointly measurable with respect to the $\si$-algebra $\cF \otimes \cB(\bbR)\otimes \cP(\bbZ^d)$.

  \item $\mean\big[\om_t(e)\big]< \infty$ and $\mean\big[\om_t(e)^{-1}\big] < \infty$ for any $e \in E_d$ and $t \in \bbR$.
  \end{enumerate}
\end{assumption}
For a given $\om \in \Om$ and for $s \in \bbR$ and $x \in \bbZ^d$, let $\Prob_{s,x}^{\om}$ be the probability measure on the space of $\bbZ^d$-valued c\`{a}dl\`{a}g functions on $\bbR$, under which the coordinate process $X \equiv (X_t : t \in \bbR)$ is the time-inhomogeneous Markov process on $\bbZ^d$ starting in $x$ at time $s$ with time-dependent generator (in the $L^2$-sense) acting on bounded functions $f\!: \bbZ^d \to \bbR$ as
\begin{align*}
  \big(\cL_t^{\om} f\big)(x)
  \;=\;
  \sum_{y: |x-y|=1}\mspace{-6mu} \om_t(\{x, y\}) \, \big(f(y) \,-\, f(x)\big).
\end{align*}
In other words, $X$ is the continuous-time random walk with time-dependent jump rates given by the conductances, i.e.\ the random walk $X$ chooses its next position at random proportionally to the conductances.  Note that the total jump rate out of any lattice site is not normalised, and the law of the sojourn time of $X$ depends on its time-space position.  Therefore, $X$ is often called the \emph{variable speed random walk (VSRW)}.  It is known that under Assumption~\ref{ass:P}-(iii) the process $X$ does not explode, i.e.\ there are only finitely many jumps in finite time, see \cite[Lemma~4.1]{ACDS18}.  Note that the counting measure is a time-independent invariant measure for $X$.  For $x,y \in \bbZ^d$ and $t\geq s$, we denote $p^{\om}(s,x;t,y)$ the heat kernel of $(X_t : t \geq s )$, that is
\begin{align*}
  p^{\om}(s,x;t,y)
  \;\ldef\;
  \Prob_{s,x}^{\om}\big[X_t = y\big].
\end{align*}
During the last decade, considerable effort has been invested in the derivation of a quenched functional central limit theorem (QFCLT) or quenched invariance principle, see the surveys \cite{Bi11, Ku14} (and references therein), and \cite{ADS15, BS19, DNS18} for more recent results on the static RCM.  For RCMs including long-range jumps a QFCLT has been recently established in  \cite{BCKW20} .  For the time-dynamic RCM with ergodic degenerate conductances the following QFCLT has been shown in \cite{ACDS18}.  We refer to \cite{BR18} for a closely related result including random walks on dynamical bond percolation.
\begin{assumption}\label{ass:moment}
  There exist $p, q \in (1, \infty]$ satisfying
  \begin{align*}
    \frac{1}{p-1} \,+\, \frac{1}{(p-1) q} \,+\, \frac{1}{q}
    \;<\;
    \frac{2}{d}
  \end{align*}
  such that for any $e \in E_d$ and $t \in \bbR$,
  \begin{align*}
    \mean\!\big[\om_t(e)^p\big] \;<\; \infty
    \quad \text{and} \quad
    \mean\!\big[\om_t(e)^{-q}\big] \;<\; \infty.
  \end{align*}  
\end{assumption}
\begin{theorem}[QFCLT \cite{ACDS18}]\label{thm:dyn_ip}
  Suppose that Assumptions~\ref{ass:P} and \ref{ass:moment} hold.  Then, for $\prob$-a.e.\ $\om$, the process $X^{(n)} \equiv \big(X_t^{(n)} \ldef n^{-1} X_{n^2 t}: t \geq 0 \big)$ converges (under $\Prob_{\!0}^\om$) in law towards a Brownian motion on $\bbR^d$ with a deterministic non-degenerate covariance matrix $\Si^2$.
\end{theorem}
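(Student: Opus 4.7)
The plan is to follow the classical corrector strategy, adapted to the time-inhomogeneous setting. Since $\tau_{t,x}$ is a $(d+1)$-parameter group of measure-preserving transformations, the \emph{environment seen from the particle} $\bar\om_t \ldef \tau_{t,X_t}\om$ is a Markov process on $\Om$, and one first shows it is ergodic with respect to a $\prob$-equivalent invariant measure $\bbQ$. Under Assumption~\ref{ass:P}-(iii) (so under the weaker moment assumption alone) the counting measure is invariant for $X$, and $\bbQ=\prob$ works for the VSRW. This provides the ergodic-theorem backbone required to pass from averaged to quenched statements.

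Next I would build a space-time corrector $\chi\!: \Om \times \bbR \times \bbZ^d \to \bbR^d$ such that the process $\Ph(\om,t,x) \ldef x + \chi(\om,t,x)$ is caloric in the sense that $(\partial_t + \cL_t^\om)\Ph(\om,t,\,\cdot\,)(x) = 0$, and such that $\chi$ satisfies the cocycle property $\chi(\om,t,x+y)-\chi(\om,t,y) = \chi(\tau_{0,y}\om,t,x)$ and is stationary under space-time shifts. Concretely, one defines $\chi$ as a limit of finite-volume correctors obtained by minimising a Dirichlet-type energy; the existence of the limit in a suitable $L^2(\bbQ)$ Hilbert space of shift-covariant, mean-zero cocycles uses only Assumption~\ref{ass:P} together with $\mean[\om_t(e)]<\infty$. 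Once $\chi$ exists, the identity
\begin{align*}
  M_t \;\ldef\; X_t \,-\, X_0 \,+\, \chi(\om,t,X_t) \,-\, \chi(\om,0,X_0)
\end{align*}
defines a $\Prob_{\!0,0}^\om$-martingale whose rescaling $n^{-1} M_{n^2 t}$ can be shown to converge to a Brownian motion with covariance $\Si^2 = \mean_{\bbQ}[(\mi + \nabla\chi)^T \om_0 (\mi + \nabla\chi)]$ via the martingale functional CLT (verifying the Lindeberg condition and convergence of the quadratic variation using ergodicity of $\bar\om$).

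The main obstacle, as in the static case treated in \cite{ADS15, ADS16}, is the \textbf{quenched sublinearity} of the corrector, i.e.\ that
\begin{align*}
  \frac{1}{n}\,\sup_{|t|\le T,\,|x|\le n} \bigl| \chi(\om,n^2 t, x) \bigr|
  \;\xrightarrow{n\to\infty}\;
  0 \qquad \prob\text{-a.s.}
\end{align*}
Here the time-dependence and the absence of uniform ellipticity make the argument delicate. My plan is first to establish $\ell^1$-sublinearity on the average by an ergodic-theoretic argument exploiting the cocycle property and $\mean_{\bbQ}[\nabla\chi]=0$, and then to upgrade this to pointwise sublinearity through a maximal-inequality / Moser-type iteration for the caloric function $\chi$. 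The moment condition in Assumption~\ref{ass:moment} is precisely what is needed so that the weighted Sobolev inequality (with random conductance weights on the edges and random reversing measure on the vertices) has an exponent that beats $d/2$, making the iteration close. This is the step where the balance $\tfrac{1}{p-1}+\tfrac{1}{(p-1)q}+\tfrac1q<\tfrac{2}{d}$ enters, reflecting a Hölder-in-time / Sobolev-in-space trade-off specific to the parabolic, time-dependent setting.

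Finally, non-degeneracy of $\Si^2$ follows from the variational characterisation of $\chi$: if $\xi^T\Si^2\xi=0$ for some unit $\xi$, then $\xi + \nabla\chi\cdot\xi \equiv 0$ on edges where $\om>0$, which, combined with stationarity and $\mean[\om_t(e)^{-q}]<\infty$ (so all edges are open almost surely), forces $\xi=0$, a contradiction. Combining the martingale CLT with sublinearity of $\chi$ and Assumption~\ref{ass:P}-(iii) to absorb the heat-kernel tails then yields the quenched invariance principle stated in Theorem~\ref{thm:dyn_ip}.
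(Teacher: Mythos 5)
Note first that Theorem~\ref{thm:dyn_ip} is \emph{cited} from \cite{ACDS18} and is not reproved in the present paper, so there is no in-paper argument to compare against; the paper merely imports the QFCLT as an input to the local CLT. That said, your outline does faithfully recapitulate the strategy used in \cite{ACDS18}: the environment seen from the particle $\bar\om_t = \tau_{t,X_t}\om$ is a time-homogeneous Markov process with $\prob$ itself as stationary ergodic measure (for the VSRW), one constructs a stationary space-time cocycle corrector $\chi$ for which $\Ph(\om,t,x)=x+\chi(\om,t,x)$ solves the backward equation $\partial_t\Ph + \cL_t^\om\Ph=0$, applies the martingale FCLT to $M_t=\Ph(\om,t,X_t)$, and then the heart of the proof is quenched sublinearity of $\chi$, obtained by first proving sublinearity on average via the ergodic theorem and the cocycle/mean-zero structure, and then upgrading to a pointwise estimate via a parabolic Moser iteration — which is exactly where the exponent balance in Assumption~\ref{ass:moment} is used. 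Two small points where you should be more careful if you were to flesh this out: the cocycle property must be a genuine \emph{space-time} cocycle, $\chi(\om,t+s,x+y)-\chi(\om,s,y)=\chi(\tau_{s,y}\om,t,x)$, not merely the spatial identity at fixed $t$ that you wrote; and your formula for $\Si^2$ uses $\mi$, which in this paper's macros denotes $\mathrm{i}$ — you want the identity cocycle (the coordinate map) there, e.g.\ $\Si^2 = \mean\big[\sum_{e}\om_0(e)\,(\nabla(\Pi+\chi)(e))(\nabla(\Pi+\chi)(e))^T\big]$ up to normalisation.
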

As our main result we establish a quenched local limit theorem (or quenched local CLT) for $X$, which states that, $\prob$-a.s., under diffusive scaling the rescaled transition densities converge uniformly over compact sets towards the Gaussian transition density of the Brownian motion with covariance matrix $\Sigma^2$ appearing as the limit process in Theorem~\ref{thm:dyn_ip}. That Gaussian density will be denoted
\begin{align} \label{eq:def_kt}
  k_t(x)
  \;\equiv\;
  k_t^{\Si}(x)
  \;\ldef\;
  \frac{1}{\sqrt{(2\pi t)^d \det \Si^2}}
  \exp\!\left(-x \cdot(\Si^2)^{-1}x/2t\right).
\end{align}
\begin{theorem}[Quenched local CLT] \label{thm:dyn_lclt} 
  Suppose that Assumptions~\ref{ass:P} and \ref{ass:moment} hold.  For any $T_2> T_1 > 0$ and $K > 0$,
  \begin{align*}
    \lim_{n \to \infty} \sup_{|x|\leq K} \sup_{ t\in [T_1, T_2]}
    \big| n^d \, p^{\om}(0, 0; n^2 t, \lfloor nx \rfloor) - k_t(x) \big|
    \;=\;
    0,
    \qquad \text{for }\prob\text{-a.e. }\om.
  \end{align*}
\end{theorem}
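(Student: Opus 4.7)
The plan is to combine the QFCLT of Theorem~\ref{thm:dyn_ip} with an a priori equicontinuity statement for the rescaled heat kernels: weak convergence of integrated quantities, identified via the QFCLT, is then promoted to uniform pointwise convergence by Arzel\`a--Ascoli.  For fixed $\om$ I would set
\begin{equation*}
  u_n(t, x) \;\ldef\; n^d\, p^{\om}\big(0, 0;\, n^2 t,\, \lfloor n x \rfloor\big),
  \qquad (t, x) \;\in\; [T_1, T_2]\times \bbR^d.
\end{equation*}
Since $(t, y) \mapsto p^\om(0, 0; t, y)$ solves the discrete heat equation $\partial_t p = \cL_t^\om p$ on $(0, \infty) \times \bbZ^d$, each $u_n$ satisfies an analogous parabolic equation driven by the diffusively rescaled time-dependent conductances $(t, e) \mapsto \om_{n^2 t}(n e)$ on the lattice $n^{-1} \bbZ^d$, which is precisely the setting in which the paper's H\"older regularity theorem will apply.

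First I would use the QFCLT to identify the limit in a soft sense.  For any $f \in C_{c}(\bbR^d)$ and $t \in [T_1, T_2]$, marginal convergence at time $n^2 t$ of Theorem~\ref{thm:dyn_ip} gives
\begin{equation*}
  \sum_{y \in \bbZ^d} p^{\om}(0, 0;\, n^2 t,\, y)\, f(y/n)
  \;=\;
  \int_{\bbR^d} u_n(t, x)\, f(x)\, \md x \;+\; o(1)
  \;\xrightarrow[n \to \infty]{}\;
  \int_{\bbR^d} k_t(x)\, f(x)\, \md x
\end{equation*}
for $\prob$-a.e.\ $\om$, the $o(1)$ being a standard Riemann-sum correction coming from the cell approximation built into the definition of $u_n$.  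Running this over a countable dense family of test functions and of times in $[T_1, T_2]$, and using continuity of $t \mapsto k_t$, pins down every subsequential limit of $(u_n)$ on a single set of full $\prob$-measure.

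The main work is then to establish uniform boundedness and equicontinuity of $(u_n)$.  The $L^\infty$-bound $\sup_{n} \|u_n\|_{L^\infty([T_1, T_2] \times \bbR^d)} \le C(\om)$ comes from an $\om$-dependent on-diagonal heat-kernel upper estimate of Nash type, $p^\om(0, 0; t, y) \le C(\om)\, t^{-d/2}$ for $t$ large, derivable under Assumptions~\ref{ass:P} and \ref{ass:moment} via Moser / Nash arguments combined with the moment condition and the spatio-temporal ergodic theorem.  The equicontinuity is the heart of the matter: the paper's H\"older regularity theorem, proved by De~Giorgi iteration, will yield for non-negative weak solutions $u$ of the rescaled heat equation on a parabolic cylinder $Q$ an estimate
\begin{equation*}
  |u(s, x) \,-\, u(t, y)|
  \;\le\;
  C(\om)\, \big(|s - t|^{1/2} + |x - y|\big)^{\ga}\, \|u\|_{L^\infty(Q)}
\end{equation*}
on a smaller concentric cylinder, with random $\ga = \ga(\om) \in (0, 1]$ and $C(\om) < \infty$ for $\prob$-a.e.\ $\om$, \emph{uniformly} in the scale $n$ thanks to the ergodic theorem applied to the spatial averages of $\om_t^p$ and $\om_t^{-q}$ that enter the iteration constants.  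Applied to $u_n$, this yields uniform H\"older continuity of $(u_n)_{n \ge 1}$ on the compact set $[T_1, T_2] \times \{|x| \le K\}$.

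The argument then closes by Arzel\`a--Ascoli: $(u_n)$ is relatively compact in $C([T_1, T_2] \times \{|x| \le K\})$; every subsequential limit has the same integrals against arbitrary $f \in C_{c}(\bbR^d)$ as $k_t$ by the soft step, hence equals $k_t$; so the whole sequence converges, and the convergence is uniform on $[T_1, T_2] \times \{|x| \le K\}$, which is the content of Theorem~\ref{thm:dyn_lclt}.  The hard part is clearly the H\"older estimate: implementing De~Giorgi iteration in the discrete, time-inhomogeneous, degenerate setting demands weighted Sobolev and Poincar\'e inequalities whose constants must be controlled in terms of spatial $L^p$ / $L^q$ averages of the conductances, and the threshold $\tfrac{1}{p-1} + \tfrac{1}{(p-1) q} + \tfrac{1}{q} < \tfrac{2}{d}$ in Assumption~\ref{ass:moment} is precisely what is needed for the iteration to close and for the random constants $C(\om)$ and exponents $\ga(\om)$ to be $\prob$-a.s.\ finite and positive.
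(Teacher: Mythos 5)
Your proposal correctly identifies all three essential ingredients of the paper's argument: the QFCLT of Theorem~\ref{thm:dyn_ip} to determine the limiting object in a weak sense, a large-scale parabolic H\"older estimate proved by De~Giorgi iteration (the paper's Theorem~\ref{thm:osc} via Proposition~\ref{prop:fromosc}) to provide equicontinuity, and the spatio-temporal ergodic theorem (Proposition~\ref{prop:krengel_pyke}, Lemma~\ref{lem:erg_deterministic}) to make the random H\"older constants uniform in the scale $n$. You also correctly note that the on-diagonal bound $p^\om(0,0;t,y)\leq C t^{-d/2}$ is what normalises the oscillation estimate, although the paper obtains it from the De~Giorgi maximal inequality (Corollary~\ref{cor:hk_upper}) rather than a Nash/Moser route.

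Where you diverge from the paper is only in how these ingredients are assembled. The paper first formulates an abstract local-CLT criterion on subgraphs (Theorem~\ref{thm:lclt_det}) with hypotheses $(\cG.1)$--$(\cG.4)$, following Barlow--Hambly~\cite{BH09}, and proves it by a direct quantitative decomposition $J = J_1 + J_2 + J_3 + J_4$ in which $J_1$ is controlled by the oscillation bound, $J$ and $J_3$ by the CLT plus volume regularity, and $J_4$ by Lebesgue differentiation; uniformity in $(t,x)$ is then a covering argument, again using $(\cG.4)$. You instead appeal to Arzel\`a--Ascoli: equicontinuity from the H\"older estimate plus identification of subsequential limits from the soft (weak-convergence) step. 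The two routes are morally equivalent compactness arguments, and yours has the appeal of being conceptually shorter. One caveat, however: the functions $u_n(t,x)=n^d\,p^\om(0,0;n^2t,\lfloor nx\rfloor)$ are piecewise constant in $x$ on cells of size $1/n$, so the sequence $(u_n)$ is not equicontinuous in the classical sense required for Arzel\`a--Ascoli in $C([T_1,T_2]\times \overline{B}(0,K))$; you would need either to replace $u_n$ by a piecewise-linear interpolant, or to use the precompactness modulus only at scales larger than $1/n$ together with a diagonal/density argument. The paper's explicit decomposition avoids this issue by working directly with the discrete oscillation bound and never invoking a function-space compactness theorem. Finally, the paper gains modularity by separating out Theorem~\ref{thm:lclt_det}: the same criterion is then re-used in Section~\ref{sec:rg} for the static RCM on random graphs, whereas your argument, as written, is tied to the lattice.
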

In general, a local limit theorem is a stronger statement than a FCLT.  In fact, even in the case of time independent i.i.d.\ conductances, where the QFCLT is known to hold \cite{ABDH13}, the heat kernel may behave subdiffusively due to a trapping phenomenon (see \cite{BBHK08}), so that a local limit theorem may fail in general.  Nevertheless it does hold, for instance, in the case of uniformly elliptic conductances or for random walks on supercritical i.i.d.\ percolation clusters, see \cite{BH09}.  We refer to \cite{BKM15} for sharp conditions on the tails of i.i.d.\ conductances at zero for Harnack inequalities and a local limit theorem to hold. 
Stronger quantitative homogenization results for heat kernels and Green functions can be established by using techniques from quantitative stochastic homogenization, see \cite[Chapters 8--9]{AKM19} for details in the uniformly elliptic case. This technique has been adapted to the VSRW on static percolation clusters in \cite{DG19}, and it is expected that it also applies to other degenerate models.
 In the general ergodic setting it is known that moment conditions are necessary even for the QFCLT to hold (cf.\ \cite{BBT16}).  In fact, in \cite{ADS16,AT19} quenched local limit theorems have been derived under moment conditions that turned out to be optimal in certain cases.  A corresponding result for a class of symmetric diffusions has been obtained in \cite{CD15}.

Since the static RCM is naturally included in the time-dynamic model, the moment condition in Assumption~\ref{ass:moment} is not optimal for both, the QFCLT and local limit theorem. For the static VSRW, a QFCLT holds in $d=2$ already under the moment condition with $p=q=1$ (see \cite{Bi11}), a local limit theorem has recently been shown in \cite{BS20} under  the moment condition with $1/p+1/q<2/(d-1)$, which is a weaker condition on $p$ and $q$ as the one in Assumption~\ref{ass:moment}.

Relevant examples for dynamic RCMs include random walks in an environment generated by some interacting particle systems like zero-range or exclusion processes, cf.\ \cite{MO16}.  Some on-diagonal heat kernel upper bounds for a degenerate time-dependent conductances model are obtained in \cite{MO16}.  Full two-sided Gaussian estimates are known in the uniformly elliptic case for the VSRW \cite{DD05} or for constant speed walks under effectively non-decreasing conductances \cite{DHZ19}.  However, unlike for static environments, two-sided Gaussian heat kernel bounds are much less regular and some pathologies may arise as they are not stable under perturbations, see \cite{HK16}. Moreover, such bounds are expected to be governed by a time-dynamic version of the intrinsic distance whose exact form in a degenerate setting is unknown (cf.\ e.g.\ \cite{ADS19} for some results on the static RCM).  These facts make the derivation of Gaussian bounds for the dynamic RCM with unbounded conductances a subtle open challenge.

Finally, let us remark that there is a link between the time dynamic RCM and Ginzburg-Landau $\nabla\vp$ interface models as such random walks appear in the so-called Helffer-Sj\"ostrand representation of the space-time covariance in these models (cf.\ \cite{DD05, AT19}).  In this context, the annealed heat kernel of such a dynamic RCM is relevant.  Although the quenched version in Theorem~\ref{thm:dyn_lclt} does not directly imply an annealed local limit theorem, such a result has recently been shown in \cite{AT19} under a stronger moment condition (the proof relies on the quenched version in Theorem~\ref{thm:dyn_lclt}), which is then applied in \cite[Section~5]{AT19} to obtain a scaling limit for the space-time covariances  in the Ginzburg-Landau $\nabla\varphi$ model.  This result also applies to interface models with certain convex but not strictly convex potentials.

\subsection{The method}
The proof of Theorem~\ref{thm:dyn_lclt} has two non-trivial main ingredients, the invariance principle in Theorem~\ref{thm:dyn_ip} and a H\"older regularity estimate for the heat kernel.  For the latter it is common to use a purely analytic approach and to interpret the heat kernel as a fundamental solution of the heat equation
\begin{align}\label{eq:heat}
  (\partial_t - \cL^\om_t) u \;=\; 0.
\end{align}
Then the aim becomes a regularity estimate at large scales for solutions to the parabolic equation \eqref{eq:heat} with weights $\om$ which are not uniformly bounded away from zero and infinity.  As observed in \eqref{eq:def:generator} below, $\cL^\om_t f(x) = -\nabla^*(\om_t\nabla f)$ is in divergence form and thus it may be regarded as the discrete analogue to the operator $(L^a_t f)(x) = \sum_{i,j = 1}^d \partial_{x_i} \big(a_{ij}(t,x) \partial_{x_j} f(x)\big)$, acting on functions on $\bbR^d$, where $a = (a_{ij}(t,x))$ is a time-dependent symmetric positive definite matrix.  The question about regularity of solutions to the continuous heat equation $(\partial_t - L^a_t) u = 0$ is very classical.  The first results appeared independently in the influential works by De~Giorgi~\cite{dG57} and Nash~\cite{Na57}.  They showed that solutions to elliptic or parabolic problems are H\"older continuous if the coefficient matrix $a$ is uniformly elliptic.  Later, a new and farther reaching proof was provided by Moser~\cite{Mo60}.  In fact, nowadays the by far most common approach is to deduce H\"older regularity from a parabolic Harnack inequality (PHI) derived by Moser's iteration technique. In the continuous setting this has been implemented in \cite{KK77} for parabolic equations with time-dependent degenerate coefficients.  In the case of static and normalised weights on graphs, the approach has been used in \cite{De99} for uniformly elliptic weights and in \cite{ADS16} for degenerate weights satisfying an integrability condition.  However, in the present setting the approach fails. Indeed, the most difficult step in the proof of the PHI is to link a certain $\ell^{\al}$-norm of $u$ with its $\ell^{-\al}$-norm (cf.\ \cite[Section~4.2]{ADS16} or \cite[Section~2.4]{De99}). Those arguments require maximal inequalities on a whole range of space-time cylinders.  Unless the weights are normalised, due to certain effects on discrete spaces such maximal inequalities can only be derived between time-space cylinders on certain scales (manifested in the lower bound on $\si - \si'$ in the maximal inequality in Theorem~\ref{thm:max:ineq} below), which is not sufficient to derive a PHI.

To circumvent those obstructions we take a different route and revisit the original method of De~Giorgi \cite{dG57} and transfer it to the discrete equation~\eqref{eq:heat} on a certain class of graphs while we allow the weights $\om$ to be unbounded.  However, in a central step in \cite[Lemma~II]{dG57}, see also \cite[Equation~(5.5)]{LSU68}, the level sets of a solution are controlled by an application of an isoperimetric inequality, which fails in our setting of a discrete gradient associated with the non-local operator $\cL^\om_t$.  Instead, following an idea in \cite{WYW06}, we control the level sets of a solution $u$ to~\eqref{eq:heat} by bounding their sizes in terms of $(-\ln u)_+$ (see Lemmas~\ref{lemma:levelset:apriori} and \ref{lemma:levelset:small} below).  Then, the key result is an oscillation inequality stated in Theorem~\ref{thm:osc} below, which directly implies H\"older regularity.  Since we do not assume any uniform upper or lower bound on the conductances $\om_t(x,y)$, the global upper and lower bounds on $\om_t(x,y)$ need to be replaced by certain integrability conditions on $\om_t(x,y)$ and $1/\om_t(x,y)$.  Although this procedure does not require a full PHI, it still provides a weak PHI, see Theorem~\ref{thm:weak:harnack} below.

\subsection{Random walks on random graphs}
As an additional result we derive in Section~\ref{sec:rg} a local limit theorem for random walks evolving on a random graph under static ergodic random conductances satisfying a similar moment condition, see Theorem~\ref{thm:lclt_rg} below. Our assumptions cover a certain class of random graphs including supercritical i.i.d.\ percolation clusters and clusters in percolation models with long range correlations, see e.g.\ \cite{DRS14, Sa17}.  The corresponding QFCLT has been shown in \cite{DNS18}.  In fact, the oscillation inequality in Theorem~\ref{thm:osc} is sufficiently robust so that Theorem~\ref{thm:lclt_rg} can be derived from it by similar arguments as Theorem~\ref{thm:dyn_lclt}.

\subsection{Structure of the paper}
In Section~\ref{sec:Sobolev} we implement the De~Giorgi iteration and show the oscillation inequality.  In Section~\ref{sec:localclt} we establish in Theorem~\ref{thm:lclt_det} a local limit theorem for random walks on a class of subgraphs of $\bbZ^d$, provided a H\"older continuity estimate at large scales holds.  Then this is used to show Theorem~\ref{thm:dyn_lclt} in Section~\ref{sec:dyn_lattice}.  The result for random walks on random graphs is discussed in Section~\ref{sec:rg}.   Appendix~\ref{sec:tech} contains a technical lemma needed in the proofs, and in Appendix~\ref{sec:be_fe} we verify the forward and backward equations for the transition semigroup of $X$.

\section{De Giorgi iteration on graphs}
\label{sec:Sobolev}

\subsection{Setting and notation}
In this section we will work in a more general deterministic framework. We consider an infinite, connected, locally finite graph $G = (V, E)$   with vertex set $V$ and non-oriented edge set $E$.  We write $x \sim y$ if $\{x,y\} \in E$.  We endow the graph $(V,E)$ with time-dependent, positive weights $\om = \{\om_t(e) \in (0, \infty) : e \in E, t \in \bbR\}$, where for each $e\in E$ the map $t\mapsto \om_t(e)$ is assumed to be measurable.  Next we introduce the time-dependent finite-difference operator
\begin{align}\label{eq:operator}
  \cL_t^\om f(x)
  \;=\;
  \sum_{y \sim x} \om_t(\{x, y\}) \, \big( f(y) \,-\, f(x)\big),
  \qquad t \in \bbR,\, x \in V,
\end{align}
acting on bounded functions $f\!:V \to \bbR$.  Further, we define the measures $\mu_t^{\om}$ and $\nu_t^{\om}$ on $V$ by
\begin{align*}
  \mu_t^{\om}(x) \;\ldef\; \sum_{y \sim x}\, \om_t(\{x,y\})
  \qquad \text{and} \qquad
  \nu_t^{\om}(x) \;\ldef\; \sum_{y \sim x}\, \frac{1}{\om_t(\{x,y\})}.
\end{align*}
We endow $(V,E)$ with the counting measure that assigns to any $A \subset V$ the number $|A|$ of elements in $A$.  Moreover, we denote by $B(x,r) \ldef \{y \in V: d(x,y) \leq \lfloor r \rfloor\}$ the closed ball with center $x$ and radius $r$ with respect to the natural graph distance $d$, and for a set $A \subset V$ we define its boundary by $\partial A \ldef \{ x\in A \,:\, \exists\, y \in V \setminus A \text{ such that } \{x,y\} \in E \}$.  For functions $f\!:A \to \bbR$, where either $A \subset V$ or $A \subset E$, the $\ell^p$-norm $\norm{f}{p}{A}$ will be taken with respect to the counting measure.  The corresponding scalar products in $\ell^2(V)$ and $\ell^2(E)$ are denoted by $\langle \cdot, \cdot \rangle_{\ell^2(V)}$ and $\langle \cdot, \cdot \rangle_{\ell^2(E)}$, respectively.  For any non-empty, finite $B\subset V$ and $p\in (0,\infty)$, we introduce space-averaged norms on functions $f\!: B \to \bbR$ by 
\begin{align*}
  \Norm{f}{p,B}
  \;\ldef\;
  \bigg(\frac{1}{|B|}\sum_{x\in B} |f(x)|^p\bigg)^{\!\!1/p}.
\end{align*}
Moreover, for any non-empty compact interval $I \subset \bbR$ and any finite $B \subset V$ and $p, p' \in (0, \infty)$, we define space-time-averaged norms on functions $u\!: I \times B \to \bbR$ by
\begin{align*}
  \Norm{u}{p, p', I \times B}
  \;\ldef\;
  \bigg(
    \frac{1}{|I|}\; \int_I\, \Norm{u_t}{p, B}^{p'}\; \md t
  \bigg)^{\!\!1/p'}\!\!
  \ldef\;
  \bigg(
    \frac{1}{|I|}\;
    \int_I\,
      \bigg(
        \frac{1}{|B|}\, \sum_{x \in B}\, |u_t(x)|^p
      \bigg)^{\!p'/p}
    \md t
  \bigg)^{\!\!1/p'}
\end{align*}
and $\Norm{u}{p, \infty, I \times B} \ldef \sup_{t \in I} \Norm{u_t}{p, B}$,
where $u_t( \cdot ) \ldef u(t,  \cdot )$ for any $t \in I$.
\medskip

Next we need to introduce some discrete calculus. For $f\!: V \to \bbR$ and $F\!: E \to \bbR$ we define the operators $\nabla f\!: E \to \bbR$ and $\nabla^*F\!: V \to \bbR$ by
\begin{align*}
  \nabla f(e) \;\ldef\; f(e^+) - f(e^-),
  \qquad \text{and} \qquad
  \nabla^*F (x)
  \;\ldef\;
  \sum_{e: e^+ =\,x}\! F(e) \,-\! \sum_{e:e^-=\, x}\! F(e),
\end{align*}
where for each non-oriented edge $e \in E$ we specify one of its two endpoints as its initial vertex $e^+$ and the other one as its terminal vertex $e^-$.  Nothing of what will follow depends on the particular choice.  Since $\scpr{\nabla f}{F}{E} = \scpr{f}{\nabla^* F}{V}$ for all $f \in \ell^2(V)$ and $F \in \ell^2(E)$, $\nabla^*$ can be seen as the adjoint of $\nabla$.  Notice that in the discrete setting the product rule reads
\begin{align}\label{eq:rule:prod}
  \nabla(f g)
  \;=\;
  \av{f} \nabla g \,+\, \av{g} \nabla f,
\end{align}
where $\av{f}(e) \ldef \frac{1}{2}(f(e^+) + f(e^-))$.  We observe that the operator $\cL_t^{\om}$ defined in~\eqref{eq:operator} has the form
\begin{align}\label{eq:def:generator}
  \cL^{\om}_t f (x)
  \; = \;
  - \nabla^*(\om_t \nabla f) (x).
\end{align}
For any $t \in \bbR$, the \emph{time-dependent Dirichlet form} associated with $\cL_t^{\om}$ is given by
\begin{align} \label{eq:def:dform}
  \cE_t^{\om}(f,g)
  \;\ldef\;
  \scpr{f}{-\cL_t^{\om} g}{V}
  \;=\;
  \scpr{\nabla f}{\om_t \nabla g}{E},
\end{align}
and we set $\cE_t^{\om}(f) \ldef \cE_t^{\om}(f,f)$.

Finally, throughout the paper, we write $c$ to denote a positive, finite constant which may change on each appearance.  Constants denoted by $C_i$ will remain the same.  In this section we make the following assumptions on the graph $(V,E)$.
\begin{assumption}\label{ass:graph}
  Let $d \geq 2$.  There exist constants $c_{\mathrm{reg}}, C_{\mathrm{reg}}, C_{\mathrm{S}_1}, C_{\mathrm{P}} \in (0, \infty)$ and $ C_{\mathrm{W}} \in [1,\infty)$ such that for all $x \in V$ the following hold.
  \begin{itemize}
  \item[(i)] Volume regularity of order $d$ for large balls. There exists $N_1(x) < \infty$ such that for all $n \geq N_1(x)$,
    \begin{align}\label{eq:reg:large_balls}
      c_{\mathrm{reg}}\, n^d \;\leq\; |B(x, n)| \;\leq\; C_{\mathrm{reg}}\, n^d.
    \end{align}

  \item[(ii)] Sobolev inequality.   There exist $d'\geq d$ and  $N_2(x) < \infty$ such that for all $n \geq N_2(x)$,
    \begin{align} \label{eq:sobolev}
      \Norm{u}{d'/(d'-1),B(x,n)}
      \;\leq\;
      C_{\mathrm{S_1}}\, \frac{ n}{|B(x, n)|}\, \Norm{\nabla u}{\ell^1(E)},
    \end{align}
    for every function $u\!: V \to \bbR$ with $\supp u \subset B(x, n)$.  
    
  \item[(iii)] Weak Poincar\'{e} inequality.  There exists $N_3(x) < \infty$
    such that for all \mbox{$n \geq N_3(x)$,}
    \begin{align}\label{eq:poincare:ineq}
      \sum_{y \in B(x,n)}\mspace{-6mu} \big| u(x) - (u)_{\cN} \big|
      \;\leq\;
      C_{\mathrm{P}}\, n\, \bigg( 1 + \frac{|B(x,n)|}{|\cN|} \bigg)\,
      \sum_{\substack{y, y' \in B(x, C_{\mathrm{W}} n)\\ y \sim y'}}
      \mspace{-24mu}|\nabla u_t(\{y, y'\}|
    \end{align}
    for every $u\!: V \to \bbR$ and $\cN \subset B(x, n)$ where $(u)_{\cN} \ldef \frac{1}{|\cN|} \sum_{y \in \cN} u(y)$.
  \end{itemize}
\end{assumption}
\begin{remark} \label{rem:ass_graph}
  \begin{enumerate}[(i)]
  \item The Euclidean lattice, $(\bbZ^d, E_d)$, satisfies Assumption~\ref{ass:graph} with $d'=d$ and $N_1(x)=N_2(x)=N_3(x)=1$. 

  \item Suppose that Assumption~\ref{ass:graph}-(i) holds. Then the Sobolev inequality in \ref{eq:sobolev} follows from an isoperimetric inequality for large sets, see \cite[Proposition~3.5]{DNS18}. The weak Poincar\'{e} inequality in \eqref{eq:poincare:ineq} follows from a classical local $\ell^1$-Poincar\'{e} inequality, which in turn can be obtained from a (weak) relative isoperimetric inequality by applying a discrete version of the co-area formula, see \cite[Lemma~3.3.3]{Sa96}.
  \end{enumerate}
\end{remark}
Next we recall that Assumption~\ref{ass:graph} implies a weighted space-time Sobolev inequality for functions with compact support. Noting that $d' \geq d\geq 2$ we set
\begin{align}\label{eq:def:rho}
  \rho
  \;\equiv\;
  \rho(d',  q)
  \;=\;
  \frac{d'}{d' - 2 + d'/q}.
\end{align}
\begin{prop}[Space-time Sobolev inequality]\label{prop:sobolev}
  Suppose that Assumption~\ref{ass:graph}-(i) and (ii) hold for some $d' \geq d$. 
  Let $I \subset \bbR$ be a compact interval.  Then, for any $q \in [1, \infty)$, $q' \in [1, \infty]$ there exists $C_{\mathrm{S}} \equiv C_{\mathrm{S}}(d, \th, q) < \infty$ such that for any $x \in V$ and $n \geq N_1(x) \vee N_2(x)$,
  \begin{align}\label{eq:sob:ineq:weight}
    \Norm{u^2}{\rho, q'/(q'+ 1), I \times B(x, n)}
    \;\leq\;
    C_{\mathrm{\,S}}\, n^2\, \Norm{\nu^{\om}}{q, q'\!, I \times B(x, n)}\;
    \bigg(
      \frac{1}{|I|}\,
      \int_I\, \frac{\cE_{t}^{\om}(u_t)}{|B(x, n)|}\; \md t
    \bigg)
  \end{align}
  for every $u\!: \bbR \times V \to \bbR$ with $\supp u \subset I \times B(x, n)$.  If $\th > 0$, then \eqref{eq:sob:ineq:weight} holds for $q = \infty$.
\end{prop}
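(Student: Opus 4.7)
The plan is to prove the inequality in two stages: first derive a pointwise-in-time weighted Sobolev inequality, then apply Hölder's inequality in the time variable. Set $\si \ldef d'/(d'-1)$ (the Sobolev exponent from \eqref{eq:sobolev}) and $p^\ast \ldef q/(q-1)$ (the Hölder conjugate of $q$). The pointwise goal is
\begin{align*}
  \Norm{u_t^2}{\rho, B(x,n)}
  \;\leq\;
  C\, \frac{n^2}{|B(x,n)|}\; \Norm{\nu_t^\om}{q, B(x,n)}\; \cE_t^\om(u_t),
  \qquad t \in \bbR,
\end{align*}
valid for any $u_t$ with $\supp u_t \subset B(x,n)$.

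The key trick is to apply the Sobolev inequality \eqref{eq:sobolev} not to $u_t^2$ but to $v \ldef |u_t|^k$, with the exponent $k \ldef 2\rho/\si$. A direct algebraic check from the definition $\rho = d'/(d'-2+d'/q)$ yields the two identities
\begin{align*}
  k\,\si \;=\; 2\rho
  \qquad\text{and}\qquad
  (k-1)\,p^\ast \;=\; \rho,
\end{align*}
which are exactly what is needed to close the argument, and also $k \ge 1$ for every $q \ge 1$. The discrete chain rule $\big||a|^k - |b|^k\big| \leq k\,(|a|^{k-1}+|b|^{k-1})\,|a-b|$ then gives $|\nabla v(e)| \leq k\,(|u_t(e^+)|^{k-1}+|u_t(e^-)|^{k-1})\,|\nabla u_t(e)|$. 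Feeding this into \eqref{eq:sobolev}, splitting $|\nabla u_t(e)| = \om_t(e)^{-1/2}\cdot \om_t(e)^{1/2}|\nabla u_t(e)|$ and applying Cauchy--Schwarz extracts $\sqrt{\cE_t^\om(u_t)}$ and a companion factor bounded, by the elementary edge-to-vertex accounting, as $\sum_e \av{|u_t|^{k-1}}(e)^2 \om_t(e)^{-1} \leq \tfrac12 \sum_y |u_t(y)|^{2(k-1)} \nu_t^\om(y)$. Hölder's inequality in space with conjugate exponents $p^\ast$ and $q$ converts the vertex sum into $|B(x,n)|\, \bigl\||u_t|^{2(k-1)}\bigr\|_{p^\ast,B(x,n)}\, \Norm{\nu_t^\om}{q, B(x,n)}$. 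The two algebraic identities now collapse the $u$-terms: $\|v\|_{\si,B(x,n)}^2 = \Norm{u_t^2}{\rho, B(x,n)}^{k}$ and $\bigl\||u_t|^{2(k-1)}\bigr\|_{p^\ast, B(x,n)} = \Norm{u_t^2}{\rho, B(x,n)}^{k-1}$, so squaring the Sobolev--Cauchy--Schwarz estimate and dividing by $\Norm{u_t^2}{\rho, B(x,n)}^{k-1}$ delivers the pointwise bound. The case $q = \infty$ (where $p^\ast = 1$) is handled analogously, with the spatial Hölder step replaced by the trivial bound $\nu_t^\om(y) \le \Norm{\nu_t^\om}{\infty, B(x,n)}$.

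For the time-integration step, raise the pointwise inequality to the power $\al \ldef q'/(q'+1)$, average over $t \in I$, and apply Hölder's inequality in $t$ to the product $\Norm{\nu_t^\om}{q, B(x,n)}^\al\, \bigl(\cE_t^\om(u_t)/|B(x,n)|\bigr)^\al$ with conjugate exponents $q'+1$ and $(q'+1)/q'$. This cleanly separates the $\nu^\om$ factor, which becomes $\Norm{\nu^\om}{q,q', I\times B(x,n)}^\al$, from the time average of $\cE_t^\om(u_t)/|B(x,n)|$, which appears to the first power. Raising back to the power $1/\al$ yields exactly \eqref{eq:sob:ineq:weight}. The main conceptual content of the argument is the exponent-matching game that pins down $k = 2\rho/\si$; once the two identities above are verified, everything else is a disciplined combination of Sobolev, Cauchy--Schwarz, and two Hölder inequalities.
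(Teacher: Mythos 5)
Your proof is correct and follows essentially the same route as \cite[Proposition~5.4]{ACDS18}, to which the paper defers: apply the unweighted Sobolev inequality \eqref{eq:sobolev} to $|u_t|^k$ with $k = 2\rho/\si$, extract $\sqrt{\cE^\om_t(u_t)}$ via the chain rule and Cauchy--Schwarz, invoke Hölder in space with exponents $(q/(q-1), q)$ to produce $\Norm{\nu^\om_t}{q,B}$ (the identities $k\si=2\rho$ and $(k-1)q/(q-1)=\rho$ collapse the remaining $u$-terms), and finish with Hölder in time with exponents $(q'+1, (q'+1)/q')$. Your handling of $q=\infty$ is also fine, with the implicit restriction $d'>2$ (which is exactly what $\th>0$ guarantees, keeping $\rho<\infty$) matching the caveat in the statement.
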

\begin{proof}
  See \cite[Proposition~5.4]{ACDS18}.
\end{proof}

\subsection{H\"older regularity estimates}
Our main objective in this section is to implement De Giorgi's iteration scheme in the graph setting with time-dependent degenerate weights in order to derive a H\"older regularity estimate for solutions of parabolic equations.  Write 
\begin{align*}
  \osc_Q  u
  \;\ldef\;
  \sup_{(t, x) \in Q} u(t,x) - \inf_{(t,x) \in Q} u(t,x)
\end{align*}
for the oscillation a the function $u$ over a set $Q \subset \bbR \times V$.
\begin{theorem}[Oscillation inequality]\label{thm:osc}
  Suppose that Assumption~\ref{ass:graph} holds.  For $t_0 \in \bbR$, $x_0 \in V$ and $n \geq N_4(x_0)\ldef 2^{8d} C_{\mathrm{W}} \max\{ N_1(x_0), N_2(x_0), N_3(x_0)\}$, let $u > 0$ be such that $\partial_t u - \cL_t^{\om} u = 0$ on $Q(n) =[t_0-n^2,t_0]\times B(x_0,n)$.  Then, for any $p, p', q, q' \in [1, \infty]$ satisfying
  \begin{align}\label{eq:cond:pq2}
    \frac{1}{p} \cdot \frac{p'}{p'-1} \cdot \frac{q'+1}{q'}
    \,+\,
    \frac{1}{q}
    \;<\;
    \frac{2}{d'},
  \end{align}
  and such that $\Norm{1 \vee \mu^{\om}}{p,p',Q(n)}, \Norm{1 \vee \nu^{\om}}{q,q',Q(n)} < \infty$, there exist  $\vartheta \in (0, 1/2)$ and
  \begin{align*}
    \ga^{\om}
    \;=\;
    \ga^{\om}(x_0,n)
    \;=\;
    \ga\big(\Norm{1 \vee \mu^{\om}}{p,p',Q(n)},
    \Norm{1 \vee \nu^{\om}}{q,q',Q(n)}\big)
    \in
    (0, 1), 
  \end{align*}
  where $\ga\!: [0, \infty)^2 \to (0,1)$ is continuous and increasing in both components, such that
  \begin{align}\label{eq:oscillation_ineq}
    \osc_{Q(\vt n)} u
    \;\leq\;
    \ga^{\om}\, \osc_{Q(n)} u.
  \end{align}
\end{theorem}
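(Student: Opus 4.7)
The plan is to implement De~Giorgi's iteration scheme for nonnegative time-space harmonic functions on discrete parabolic cylinders, and to convert the resulting $L^\infty$ improvement into oscillation decay. The overall strategy is: if $u$ spends most of its mass away from its supremum on $Q(n)$, then on a smaller cylinder $u$ must be strictly bounded away from the supremum. Combined with a dichotomy argument, this yields \eqref{eq:oscillation_ineq}.

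\medskip

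\textbf{Step 1 (normalisation and dichotomy).} Set $M \ldef \sup_{Q(n)} u$ and $m \ldef \inf_{Q(n)} u$. After replacing $u$ by $u/M$ (or, to access the other side, by $(M-u)/(M-m)$), we may assume $0 < u \leq 1$ on $Q(n)$ and $u$ still satisfies the heat equation. On an intermediate cylinder $Q(n/2)$ one of the two level sets $\{u \geq 1/2\}$, $\{u \leq 1/2\}$ has measure at least half of $|Q(n/2)|$; by reflecting $u \mapsto 1-u$ if needed, suppose the former. The aim is to show that $\inf_{Q(\vt n)} u \geq \eta$ for some $\eta = \eta^{\om} \in (0,1)$, which immediately yields $\osc_{Q(\vt n)} u \leq 1 - \eta \cdot (M-m)^{-1} \cdot (\text{something positive})$ and in particular \eqref{eq:oscillation_ineq} with $\gamma^\om = 1 - \eta^\om$.

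\medskip

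\textbf{Step 2 (logarithmic control of small-level sets).} This is the key analytic input that replaces the classical isoperimetric step of De~Giorgi, which fails in our discrete, non-local setting. Following the idea the authors attribute to \cite{WYW06}, one tests the equation $\partial_t u = \cL_t^\om u$ against a suitable truncation of $-\ln(u+\varepsilon)$ multiplied by a space-time cutoff $\eta^2$. Using \eqref{eq:rule:prod} and the algebraic inequality $(b-a)(\ln b - \ln a) \geq c (\sqrt{b}-\sqrt{a})^2$ on the discrete gradient of $u$, one obtains an energy bound for $(-\ln u)_+$ on $Q(n/2)$. The weighted space-time Sobolev inequality of Proposition~\ref{prop:sobolev} combined with the weak Poincar\'e inequality \eqref{eq:poincare:ineq} then converts this bound into a quantitative estimate on $|\{u \leq \lambda\} \cap Q'|$ for $\lambda$ small and $Q'$ a suitable subcylinder — this is the content of the two level-set lemmas (\texttt{lemma:levelset:apriori} and \texttt{lemma:levelset:small}) referenced in the introduction. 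Concretely, if $|\{u \geq 1/2\}| \gtrsim |Q(n/2)|$, then $|\{u \leq \lambda\} \cap Q(n/4)| \to 0$ as $\lambda \to 0$, at a rate controlled by $\Norm{1\vee\mu^\om}{p,p',Q(n)}$ and $\Norm{1\vee\nu^\om}{q,q',Q(n)}$.

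\medskip

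\textbf{Step 3 (De~Giorgi iteration).} Fix a geometric sequence of levels $k_j = \lambda 2^{-j}$ and a sequence of nested cylinders $Q_j \downarrow Q(\vt n)$ with $Q_0 = Q(n/4)$. Testing the equation with the cutoff function $\eta_j^2 (k_j - u)_+$ gives a Caccioppoli inequality of the form
\begin{align*}
  \sup_{t \in I_j} \Norm{(k_j - u_t)_+}{2, B_j}^2
  + \frac{1}{|I_j|}\int_{I_j} \frac{\cE_t^\om((k_j - u_t)_+)}{|B_j|}\, \md t
  \;\leq\;
  C 4^j n^{-2} \Norm{1\vee\mu^\om}{p,p',Q_{j-1}} \Norm{(k_{j-1}-u)_+^2}{p/(p-1), p'/(p'-1), Q_{j-1}}.
\end{align*}
Feeding this into Proposition~\ref{prop:sobolev} on $Q_j$, using H\"older on the resulting averaged norms, and bounding the level set via $\indicator_{\{u<k_{j+1}\}} \leq (k_j - u)_+ / (k_j - k_{j+1})$ yields a nonlinear recursion $a_{j+1} \leq C b^j a_j^{1+\alpha}$ for $a_j \ldef \Norm{(k_j-u)_+^2}{\rho, q'/(q'+1), Q_j}$ with $\alpha > 0$ precisely when \eqref{eq:cond:pq2} holds. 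A standard De~Giorgi iteration lemma then gives $a_j \to 0$ provided $a_0$ is smaller than a threshold depending only on $\Norm{1\vee\mu^\om}{p,p',Q(n)}$ and $\Norm{1\vee\nu^\om}{q,q',Q(n)}$. By Step~2 this threshold is met for $\lambda = \lambda^\om$ chosen small enough, yielding $u \geq \lambda^\om/2$ on $Q(\vt n)$.

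\medskip

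\textbf{Step 4 (conclusion and dependence of $\gamma^\om$).} Undoing the normalisation of Step~1 gives $\inf_{Q(\vt n)} u \geq m + \lambda^\om (M-m)/2$, hence \eqref{eq:oscillation_ineq} with $\gamma^\om = 1 - \lambda^\om/2$. Tracking the constants through Steps~2--3 shows that $\lambda^\om$ depends monotonically and continuously on the two averaged norms, giving the stated structure of $\gamma(\cdot,\cdot)$. The main obstacle is Step~3: unlike in the uniformly elliptic continuous setting, the Caccioppoli--Sobolev recursion here carries weight factors $\mu^\om$, $\nu^\om$ at every scale, and the reason \eqref{eq:cond:pq2} takes its specific form is that one must extract a genuine gain $\alpha > 0$ in the exponent after integrating these weights through space-time averages. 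A secondary subtle point is that the discrete non-local gradient does not satisfy a chain rule, so the logarithmic computation in Step~2 must be handled via the convexity inequality above rather than a literal $\nabla \ln u$ identity.
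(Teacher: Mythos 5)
Your proposal follows the same architecture as the paper's proof: dichotomy/reflection, control of small-level sets via a logarithmic test function and the weak Poincar\'e inequality in place of the classical isoperimetric step, then a De~Giorgi iteration combining a Caccioppoli (energy) estimate with the weighted space-time Sobolev inequality of Proposition~\ref{prop:sobolev}, and finally conclusion of the oscillation decay. The only genuine difference is one of packaging: the paper first proves a standalone maximal inequality (Theorem~\ref{thm:max:ineq}) for $\sup(u-h)_+$, proves the two level-set lemmas (Lemmas~\ref{lemma:levelset:apriori}, \ref{lemma:levelset:small}) with levels $k_j \uparrow M_n$, and then combines them; you instead run the iteration inline on $(k_j-u)_+$ and conclude a lower bound on $\inf u$, which is the dual formulation obtained by replacing $u$ by $M_n+m_n-u$. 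Both are correct, but the paper's factorization gives the near-diagonal heat-kernel upper bound (Corollary~\ref{cor:hk_upper}) and the weak parabolic Harnack inequality (Theorem~\ref{thm:weak:harnack}) essentially for free as corollaries of the same maximal inequality, whereas the inline version would need to re-derive them.

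One slip to fix: in Step~3 you set $k_j = \lambda 2^{-j}$, which converges to $0$, so the iteration $a_j\to0$ would only recover the trivial bound $u \geq 0$. To obtain $u \geq \lambda^\om/2$ on $Q(\vt n)$ the levels must converge to a strictly positive limit, e.g.\ $k_j = \tfrac{\lambda}{2}(1 + 2^{-j}) \downarrow \lambda/2$ (equivalently, applying the paper's Theorem~\ref{thm:max:ineq} to $-u$ with $h = -\lambda$ and the levels increasing from $-\lambda$ to $-\lambda/2$). Similarly, the factor $k_j - k_{j+1}$ appearing when you insert $\indicator_{\{u<k_{j+1}\}} \leq (k_j-u)_+/(k_j-k_{j+1})$ must remain comparable to $\lambda 2^{-j}$ for the nonlinear recursion $a_{j+1} \leq C b^j a_j^{1+\alpha}$ to close. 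With that correction, the argument is sound and, modulo the discrete-scale restriction $\si-\si' > 4n^{-\De}$ which forces the finite truncation $J \sim \log n$ in the paper's iteration rather than passing directly to $j\to\infty$, it matches the paper.
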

Theorem~\ref{thm:osc} will be proven in Section~\ref{sec:proof_osc} below. This oscillation inequality becomes particularly interesting when $\om$ is a random weight configuration on $\bbZ^d$ satisfying Assumptions~\ref{ass:P} and \ref{ass:moment}. Then, by the ergodic theorem,  \eqref{eq:oscillation_ineq} holds with the same constant $\bar{\ga}$ for all $n \in \bbN$ large enough. 
\begin{assumption} \label{ass:lim_const}
  For any $\de > 0$, $\sqrt{t_0}/2 > \de$ and $x_0 \in V$ there exists $N_5(x_0, t_0) < \infty$ such that  
  \begin{align*}
    \bar{\mu}
    &\;\ldef\;
    \sup_{n \geq N_5(x_0, t_0)}
    \Norm{\mu^{\om}}{p, p', n^2 [t_0 - \de^2, t_0] \times B(x_0, \de n)}
    \;<\;
    \infty,
    \\[.5ex]
    \bar{\nu}
    &\;\ldef\;
    \sup_{n \geq N_5(x_0, t_0)}
    \Norm{\nu^{\om}}{p, p', n^2 [t_0 - \de^2, t_0] \times B(x_0, \de n)}
    \;<\;
    \infty
  \end{align*}
  are independent of $\de$, $x_0$ and $t_0$.  Write $\bar{\ga} = \ga(\bar \mu, \bar \nu) \in (0,1)$.
\end{assumption}
Assumption~\ref{ass:lim_const} is satisfied, for instance, on the lattice $\bbZ^d$ under Assumptions~\ref{ass:P} and \ref{ass:moment}, cf.\ Proposition~\ref{prop:krengel_pyke} below.
\begin{corro} \label{cor:cont_caloric}
  Suppose that Assumptions~\ref{ass:graph} and \ref{ass:lim_const} hold.  For any $\de > 0$, $x_0 \in V$ and $\sqrt{t_0}/2 > \de$ fixed, let  $\bar{\ga}$ be as in Assumption \ref{ass:lim_const}.  Further, let $n \in \bbN$ be such that $\de n \geq N_4(x_0) \vee N_5(x_0,t_0)$.  Suppose that $u > 0$ is such that $\partial_t u - \cL_t^{\om} u = 0$  on $[0, t_0] \times B(x_0, n)$.  Then, for any $t\in n^2 [t_0 - \de^2, t_0]$ and $x_1, x_2 \in B(x_0, \de n)$,
  \begin{align*}
    \big|u(t,x_1) - u(t,x_2)\big|
    \;\leq\;
    C_1\, \bigg( \frac{\de}{\sqrt{t_0}} \bigg)^{\!\!\vr}\,
    \max_{[3 t_0 / 4, t_0] \times B(x_0, \sqrt{t_0} / 2)} u,
  \end{align*}
  where $\vr \ldef \ln \bar{\ga} / \ln \vt$ and $C_1$ only depends on $\bar{\ga}$.
\end{corro}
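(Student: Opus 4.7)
The plan is to iterate the oscillation inequality of Theorem~\ref{thm:osc} on a family of concentric space-time cylinders shrinking down to the scale $\de n$, and then to convert the resulting geometric decay of the oscillation into a H\"older modulus of continuity. This is the standard De~Giorgi--Nash passage from oscillation to regularity; the only point requiring care is the bookkeeping of the scale restrictions $N_4, N_5$.

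Set $R \ldef \sqrt{t_0}\, n/2$ and, for $j \geq 0$, let
\begin{align*}
  Q_j \;\ldef\; \bigl[n^2 t_0 - (\vt^j R)^2,\, n^2 t_0\bigr] \,\times\, B(x_0, \vt^j R),
\end{align*}
so that $Q_0$ is (the diffusive rescaling of) the cylinder appearing on the right-hand side of the claim. Writing $\de_j \ldef \vt^j R/n = \vt^j\sqrt{t_0}/2$, each $Q_j$ takes the form $n^2[t_0-\de_j^2, t_0]\times B(x_0,\de_j n)$ featured in Assumption~\ref{ass:lim_const}. Choose $k$ as the largest integer with $\vt^k R \geq \de n$; by the hypothesis $\sqrt{t_0}/2 > \de$ one has $k \geq 1$, and by maximality $\vt^k R < \de n/\vt$. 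For every $t \in n^2[t_0-\de^2,t_0]$ and $x_1,x_2 \in B(x_0,\de n)$ the two points $(t,x_i)$ lie in $Q_k$, so that $|u(t,x_1)-u(t,x_2)| \leq \osc_{Q_k} u$.

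Next I would check the hypotheses needed to apply Theorem~\ref{thm:osc} at each of the scales $\vt^j R$ for $j = 0, \dots, k-1$. Since $\vt^j R \geq \vt^k R \geq \de n \geq N_4(x_0) \vee N_5(x_0,t_0)$ for every such $j$, the scale condition of Theorem~\ref{thm:osc} holds, and the moment bounds of Assumption~\ref{ass:lim_const} give $\Norm{\mu^\om}{p,p',Q_j} \leq \bar\mu$, $\Norm{\nu^\om}{p,p',Q_j} \leq \bar\nu$. By the monotonicity of $\ga$, this yields $\ga^\om(x_0, \vt^j R) \leq \ga(\bar\mu,\bar\nu) = \bar\ga$ uniformly in $j$, and iterating
\begin{align*}
  \osc_{Q_{j+1}} u \;\leq\; \bar\ga\, \osc_{Q_j} u
  \qquad (0 \leq j < k)
\end{align*}
together with $u > 0$ (so $\osc_{Q_0} u \leq \max_{Q_0} u$) gives
\begin{align*}
  |u(t,x_1)-u(t,x_2)| \;\leq\; \bar\ga^{\,k} \max_{Q_0} u.
\end{align*}

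Finally I would convert $\bar\ga^k$ into the stated H\"older factor. The choice of $k$ forces $\vt^k < (2/\vt)(\de/\sqrt{t_0})$, and using $\vr = \ln\bar\ga/\ln\vt$ (equivalently $\bar\ga = \vt^{\vr}$) one obtains
\begin{align*}
  \bar\ga^{\,k} \;=\; \vt^{\,k\vr}
  \;\leq\; (2/\vt)^{\vr}\,(\de/\sqrt{t_0})^{\vr},
\end{align*}
so the conclusion holds with $C_1 \ldef (2/\vt)^{\vr}$, which depends only on $\bar\ga$ (as $\vt$ is a universal constant from Theorem~\ref{thm:osc}). The only substantive obstacle in the argument is the bookkeeping of the various lower bounds on the intermediate scales $\vt^j R$, but as shown above every one of them is implied by the single hypothesis $\de n \geq N_4 \vee N_5$ imposed in the statement.
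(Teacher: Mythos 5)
Your proof is correct and follows essentially the same route as the paper: the same chain of shrinking cylinders $Q_j$ with radius $\vt^j\sqrt{t_0}\,n/2$, the same choice of the stopping index, the same iteration of Theorem~\ref{thm:osc} under the uniform bound $\bar\ga$ supplied by Assumption~\ref{ass:lim_const}, and the same conversion of $\bar\ga^{k}$ into $(\de/\sqrt{t_0})^{\vr}$. One negligible slip: the hypothesis $\sqrt{t_0}/2>\de$ only guarantees $k\geq 0$, not $k\geq 1$ (since $\vt<1/2$ one may have $\vt\sqrt{t_0}/2<\de$), but in the degenerate case $k=0$ the iteration is vacuous and the inequality $1\leq (2/\vt)^{\vr}(\de/\sqrt{t_0})^{\vr}$ still holds, so nothing breaks.
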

\begin{proof}
  Set $\de_k \ldef \vt^k \sqrt{t_0} /2$, $k \geq 0$ and, with a slight abuse of notation, let
  \begin{align*}
    Q_k
    \;\ldef\;
    n^2 \big[t_0- \de_k^2, t_0\big] \times B\big(x_0, \de_k n\big),
    \qquad k \geq 0.
  \end{align*}
  Choose $k_0 \in \bbN$ such that $\de_{k_0} \geq \de > \de_{k_0+1}$.  In particular, for every $k \leq k_0$ we have $\de_{k} \in [\de, \sqrt{t_0}]$.  Now we apply Theorem~\ref{thm:osc} and Assumption~\ref{ass:lim_const}, which gives
  \begin{align*}
    \osc_{Q_{k}} u
    \;\leq\;
    \bar{\ga}\, \osc_{Q_{k-1}} u ,
    \qquad \forall\, k = 1, \ldots, k_0.
  \end{align*}
  We iterate the above inequality on the chain $Q_0 \supset Q_1 \supset \cdots \supset Q_{k_0}$ to obtain
  \begin{align} \label{eq:pre_hoelder}
    \osc_{Q_{k_0}} u
    \;\leq\;
    \bar{\ga}^{k_0}\, \max_{Q_{0}} u.
  \end{align}
  Note that
  \begin{align*}
    Q_{k_0}
    \;=\;
    n^2 \big[t_0 - \de_{k_0}^2, t_0\big] \times B\big(x_0, \de_{k_0} n\big)
    \;\supset\;
    n^2 \big[t_0 - \de^2, t_0\big] \times B\big(x_0, \de n\big).
  \end{align*}
  Hence, since $\bar{\ga}^{k_0} \leq c \big(\de / \sqrt{t_0}\big)^\vr$, the claim follows from \eqref{eq:pre_hoelder}.
\end{proof}

\subsection{Maximal inequality}
For any $x_0 \in V$, $t_0 \geq 0$ and $n \geq 1$, denote by $Q(n) \equiv Q(t_0, x_0, n) \ldef [t_0-n^2, t_0] \times B(x_0, n)$ the corresponding time-space cylinder, and set
\begin{align*}
  Q_{\tau, \si}(n)
  \;\equiv\;
  Q_{\tau, \si}(t_0, x_0, n)
  \;\ldef\;
  [t_0 -\tau n^2, t_0] \times B(x_0, \si n),
  \qquad \si, \tau \in [0,1],
\end{align*}
and  $Q_{\si}(n) \equiv Q_{\si,\si}(n)$. In this subsection we will show the following maximal inequality as the main result.
\begin{theorem}\label{thm:max:ineq}
  Let Assumption~\ref{ass:graph}-(i) and (ii) be satisfied.  For $t_0 \in \bbR$, $x_0 \in V$ and $n \geq 2(N_1(x_0) \vee N_2(x_0))$, suppose that $u$ is such that $\partial_t u - \cL_t^{\om} u = 0$ on $Q(n)$.  Then, for any $0 \leq \De < 2/(d+2)$ and $p, p', q, q' \in [1, \infty]$ satisfying
  \begin{align}\label{eq:cond:pq}
    \frac{1}{p} \cdot \frac{p'}{p'-1} \cdot \frac{q'+1}{q'}
    \,+\,
    \frac{1}{q}
    \;<\;
    \frac{2}{d'},
  \end{align}
  there exist $\ka \equiv \ka(p, p', q, q', d') \in (0, \infty)$ and $C_2 \equiv C_2(p, p', q, q', d') < \infty$ such that for all $h \geq 0$ and $1/2 \leq \si' < \si \leq 1$ with $\si - \si' > 4 n^{-\De}$,
  \begin{align}\label{eq:max:ineq}
    &\max_{(t,x) \in Q_{\si'}(n)}\! u(t,x)
    \\[.5ex]
    &\mspace{36mu}\leq\;
    h \,+\,
    C_2\,
    \bigg(
      \frac{\Norm{1 \vee \mu^{\om}}{p, p'\!, Q(n)}\,
        \Norm{1 \vee \nu^{\om}}{q, q'\!,Q(n)}
        }{(\si-\si')^2}
    \bigg)^{\!\!\ka}\,
    \Norm{(u - h)_+}{2p_*, 2p_*', Q_{\si}(n)}. \nonumber
  \end{align}
\end{theorem}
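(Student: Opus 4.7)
The plan is to implement De~Giorgi's truncation-and-iteration scheme, adapted to the discrete weighted setting. Fix $h \geq 0$ and an amplitude $H > 0$ to be determined at the end, and introduce nested cylinders $Q^{(k)} \ldef Q_{\sigma_k}(n)$ with $\sigma_k \ldef \sigma' + (\sigma - \sigma') 2^{-k}$ together with truncation levels $h_k \ldef h + H(1 - 2^{-k})$. Set $u_k \ldef (u - h_k)_+$ and $J_k \ldef \Norm{u_k}{2p_*, 2p_*', Q^{(k)}}$. The goal is a recursion
\begin{equation*}
  J_{k+1} \;\leq\; C^k\, M\, J_k^{1+\epsilon}, \qquad \epsilon > 0,
\end{equation*}
where $M$ is controlled by a power of $(\sigma-\sigma')^{-2}\,\Norm{1\vee\mu^\om}{p,p',Q(n)}\,\Norm{1\vee\nu^\om}{q,q',Q(n)}$. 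A standard geometric iteration lemma (see Appendix~\ref{sec:tech}) then forces $J_k \to 0$ provided $J_0 \leq c\, M^{-1/\epsilon}$, yielding $u \leq h + H$ on $Q_{\sigma'}(n)$; the required smallness of $J_0$ is finally imposed by choosing $H$ of exactly the form appearing on the right-hand side of~\eqref{eq:max:ineq}.

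The first step is a Caccioppoli-type energy estimate. Choose a space-time cutoff $\eta_k \in [0,1]$, equal to $1$ on $Q^{(k+1)}$ and supported in $Q^{(k)}$, with $|\nabla \eta_k| \lesssim 2^k/((\sigma-\sigma')n)$ and $|\partial_t \eta_k| \lesssim 4^k/((\sigma-\sigma')n)^2$. Testing $\partial_t u - \cL_t^\om u = 0$ against $\eta_k^2 u_{k+1}$, using $\scpr{f}{-\cL_t^\om g}{V} = \cE_t^\om(f,g)$ together with the discrete product rule~\eqref{eq:rule:prod}, and absorbing the cross-term $\scpr{\av{u_{k+1}}\nabla\eta_k^2}{\om_t \nabla u_{k+1}}{E}$ into $\cE_t^\om(\eta_k u_{k+1})$ by Young's inequality, yields after integration in time
\begin{equation*}
  \sup_{t \in I^{(k+1)}}\Norm{(\eta_k u_{k+1})(t,\cdot)}{\ell^2(V)}^{\!2}
  \,+\, \int_{I^{(k)}} \cE_t^\om\big(\eta_{k,t}\, u_{k+1,t}\big)\, \md t
  \;\leq\; \frac{C\, 4^k}{((\sigma-\sigma')n)^2}\int_{Q^{(k)}}(1\vee\mu_t^\om)\,u_{k+1}^2\, \md t.
\end{equation*}

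The second step combines Proposition~\ref{prop:sobolev} applied to $\eta_k u_{k+1}$ with the $L^{2,\infty}$ bound above; this is permissible because $(\sigma - \sigma')n > 4 n^{1-\Delta}$ dominates $N_1 \vee N_2$ for $n$ large enough. Interpolating the Sobolev gain (to $L^{2\rho, 2q'/(q'+1)}$ space-time) with the $L^{2,\infty}$ bound upgrades the integrability of $u_{k+1}$ on $Q^{(k+1)}$, introducing a factor $\Norm{\nu^\om}{q,q',Q^{(k)}}$. On the right-hand side one then writes $(1\vee\mu_t^\om)u_{k+1}^2 = (1\vee\mu_t^\om)u_{k+1}^2\,\indicator_{\{u>h_{k+1}\}}$ and applies Hölder in space-time with exponents $(p,p')$ and their conjugates: this extracts $\Norm{1\vee\mu^\om}{p,p',Q^{(k)}}$ and leaves a residual $L^r$-norm of $u_{k+1}\indicator_{\{u_k > H 2^{-(k+1)}\}}$, which is tamed by Chebyshev against $J_k$. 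Condition~\eqref{eq:cond:pq} is precisely what leaves strictly positive slack between the interpolated Sobolev exponent and the Hölder exponents of $\mu^\om, \nu^\om$; the conjugate pair produced by this bookkeeping is $(2p_*, 2p_*')$, and the slack produces the iteration gain $\epsilon > 0$. Concatenating yields the desired recursion, and choosing
\begin{equation*}
  H \;\ldef\; C_2\,\bigg(\frac{\Norm{1\vee\mu^\om}{p,p',Q(n)}\,\Norm{1\vee\nu^\om}{q,q',Q(n)}}{(\sigma-\sigma')^2}\bigg)^{\!\kappa}\,\Norm{(u-h)_+}{2p_*, 2p_*', Q_\sigma(n)}
\end{equation*}
with $\kappa = 1/\epsilon$ up to the Sobolev exponents completes the proof.

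The main obstacle is the interaction of the discrete product rule with cutoff localization. Because $\nabla(\eta^2 u)(e) = \av{\eta^2}(e)\nabla u(e) + \av{u}(e)\nabla\eta^2(e)$ and $\av{\cdot}$ samples both endpoints of an edge, $\eta_k^2 u_{k+1}$ does not vanish on edges straddling $\partial \supp \eta_k$; the energy estimate therefore involves values of $u_{k+1}$ on a unit thickening of $\supp \eta_k$, and likewise norms of $\mu^\om, \nu^\om$ on a slightly enlarged cylinder. The scale condition $\sigma - \sigma' > 4 n^{-\Delta}$ with $\Delta < 2/(d+2)$ is tuned precisely so that the spatial width $(\sigma-\sigma')n$ of the cutoff region dominates this unit-scale fattening after the regularity bound~\eqref{eq:reg:large_balls}, making all $\ell^p$ norms on $Q^{(k)}$ and its thickening comparable up to a factor $\to 1$. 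The other delicate point is the purely algebraic matching of the Sobolev exponent $\rho = d'/(d'-2+d'/q)$ with the Hölder exponents of $\mu^\om, \nu^\om$ so that~\eqref{eq:cond:pq} indeed produces a strictly positive iteration gain $\epsilon$, which simultaneously dictates both the pair $(p_*, p_*')$ and the exponent $\kappa$ appearing in~\eqref{eq:max:ineq}.
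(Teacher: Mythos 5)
Your outline hits the main structural components of the paper's argument: the nested De~Giorgi truncations $h_k \uparrow h+H$, the shrinking cylinders $Q^{(k)}$, the Caccioppoli-type estimate, the combination of the space-time Sobolev inequality with the $L^{2,\infty}$ bound, the Chebyshev step that produces the iteration gain, and the exponent bookkeeping that makes \eqref{eq:cond:pq} yield a strictly positive $\epsilon$. All of this matches the paper's Step~1 up to reparametrisation. The closing of the argument, however, has a genuine gap.

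You iterate the recursion to infinity, invoking a ``geometric iteration lemma'' to conclude $J_k\to 0$ and hence $u\le h+H$ on $Q_{\sigma'}(n)$. On the lattice this is not legitimate: the spatial cutoff between $B_{\sigma_{k+1}}$ and $B_{\sigma_k}$ degenerates as soon as $(\sigma_k-\sigma_{k+1})n = 2^{-k-1}(\sigma-\sigma')n < 1$, at which point the discrete balls coincide and the Caccioppoli estimate has no room to localize, so the recursion cannot be run past that scale. The paper therefore stops the iteration at a \emph{finite} index $J=\lceil (d\ln n)/(2\alpha_*\ln 2)\rceil$, chosen so that $(\sigma_{j-1}-\sigma_j)n\ge 2$ still holds for all $j\le J$ --- this is where $\sigma-\sigma'>4n^{-\Delta}$ and $\Delta<2/(d+2)$ enter, since together with $\alpha_*\ge (d+2)/2$ they guarantee $2^{-J}(\sigma-\sigma')n \ge 2n^{1-\Delta-d/(2\alpha_*)}\ge 2$. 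The supremum is then \emph{not} extracted by sending $k\to\infty$; rather, the paper passes from $\varphi(k_{J-1},\sigma_{J-1})$ to a pointwise bound using the crude estimate $\max_{Q_{\sigma_J}}(u-k_J)_+^2 \le c\,n^d\,\Norm{(u-k_J)_+^2}{1,\infty,Q_{\sigma_J}}$ combined with the $\ell^{1,\infty}$ energy bound and \eqref{eq:reg:large_balls}. That step costs a factor $n^d$, and the choice of $J$ is tuned precisely so that the geometric decay $r^{-J}$ accumulated over $J$ iterations absorbs this $n^d$ (and the $2^{2J}$ from the final cutoff). Your explanation of the role of $\Delta$ --- that $(\sigma-\sigma')n$ dominates a unit-scale fattening so norms on $Q^{(k)}$ and its thickening are comparable --- is therefore not the right mechanism; $\Delta$ is really the parameter that lets $J$ grow like $\ln n$ while keeping the cutoffs non-degenerate. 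Finally, a small citation slip: Appendix~\ref{sec:tech} contains Lemma~\ref{lem:A1}, a pointwise estimate for a convex decreasing $g$ used in the oscillation bound, not a geometric iteration lemma.
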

The proof of Theorem~\ref{thm:max:ineq} relies on the following two lemmas, an interpolation inequality for time-space averaged norms and an energy estimate for solutions of parabolic equation with time-dependent weights.  Let $Q = I \times B$ be a time-space cylinder, where $I = [s_1, s_2]$ is an interval and $B$ is a finite, connected subset of $V$. 
\begin{lemma}\label{lemma:interp}
 For any  $\rho > 1$ and $q' \in [1, \infty]$ let $\ga_1 \in (1,\rho]$ and $\ga_2 \in [q' / (q'+1), \infty)$ be such that
  \begin{align}\label{eq:interp:cond}
    \frac{1}{\ga_1} \,+\,
    \frac{1}{\ga_2}\, \bigg(1 - \frac{1}{\rho}\bigg) \frac{q'}{q'+1}
    \;=\;
    1.
  \end{align}
  Then, for any $u\!: I \times B \to \bbR$, 
  \begin{align}\label{eq:interp}
    \Norm{u}{\ga_1, \ga_2, Q}
    \;\leq\;
    \Norm{u}{1, \infty, Q} \,+\, \Norm{u}{\rho, q'/(q'+1), Q}.
  \end{align}
\end{lemma}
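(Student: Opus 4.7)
The plan is to prove the inequality by a standard two-step Hölder/Riesz--Thorin-type interpolation: first in the space variable, then in the time variable, followed by Young's inequality to convert a product bound into a sum bound.

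First I would fix $t$ and interpolate the spatial norm. Given the relation $1/\gamma_1 = (1-\theta) + \theta/\rho$, which is consistent with the hypothesis~\eqref{eq:interp:cond} once we set
\[
  \theta \;=\; \frac{q'}{(q'+1)\,\gamma_2},
\]
a direct application of Hölder's inequality with exponents $1/((1-\theta)\gamma_1)$ and $\rho/(\theta\gamma_1)$ to the decomposition $|u_t(x)|^{\gamma_1} = |u_t(x)|^{(1-\theta)\gamma_1}\cdot|u_t(x)|^{\theta\gamma_1}$ yields the pointwise (in $t$) bound
\[
  \Norm{u_t}{\gamma_1, B} \;\leq\; \Norm{u_t}{1, B}^{1-\theta}\, \Norm{u_t}{\rho, B}^{\theta}.
\]
Since the measure on $B$ has been normalized to a probability measure, no volume factors appear.

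Second, I would raise to the $\gamma_2$-th power, average in $t$ over $I$, and use $\Norm{u_t}{1, B} \leq \Norm{u}{1,\infty, Q}$ uniformly in $t$ to pull the $L^\infty$-in-time norm outside. Our choice of $\theta$ ensures $\theta\gamma_2 = q'/(q'+1)$, so the remaining time integral is exactly $\Norm{u}{\rho, q'/(q'+1), Q}^{q'/(q'+1)}$. Taking $\gamma_2$-th roots gives
\[
  \Norm{u}{\gamma_1, \gamma_2, Q} \;\leq\; \Norm{u}{1,\infty,Q}^{1-\theta}\, \Norm{u}{\rho, q'/(q'+1), Q}^{\theta}.
\]

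Finally, since $(1-\theta) + \theta = 1$, Young's inequality $a^{1-\theta} b^\theta \leq (1-\theta)a + \theta b \leq a+b$ upgrades this multiplicative bound to the additive bound \eqref{eq:interp}. The only mild subtleties are (i) checking that the endpoint cases $\gamma_1 = \rho$ (where $\theta = 1$ and $\gamma_2 = q'/(q'+1)$) and $q' = \infty$ (where $q'/(q'+1) = 1$) are covered by the same formula, and (ii) verifying that $\gamma_1 > 1$ together with $\gamma_2 \geq q'/(q'+1)$ ensure $\theta \in [0,1]$ so that Hölder's inequality applies; both are immediate from the constraint~\eqref{eq:interp:cond}. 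There is no substantive obstacle; the lemma is essentially an instance of the Riesz--Thorin interpolation principle adapted to mixed space-time norms.
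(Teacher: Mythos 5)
Your proof is correct and uses exactly the approach the paper indicates (Hölder's inequality in the space variable with the interpolation exponent $\theta = q'/((q'+1)\gamma_2)$, followed by Young's/weighted AM--GM to convert the product bound to a sum). The verification that $1/\gamma_1 = (1-\theta) + \theta/\rho$ follows from \eqref{eq:interp:cond} and that $\theta\gamma_2 = q'/(q'+1)$ are the key identities, and you have checked both.
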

\begin{proof}
  This follows by an application of H\"older's and Young's inequality, as in \cite[Lemma~1.1]{KK77}
\end{proof}
\begin{lemma}\label{lemma:energy:est}
  Consider a smooth function $\ze\!: \bbR \to [0, 1]$ with $\ze = 0$ on $(-\infty,s_1]$ and a function $\eta\!: V \to [0, 1]$ such that
  \begin{align*}
    \supp \eta \;\subset\; B
    \qquad\text{and} \qquad
    \eta \;\equiv\; 0 \quad \text{on} \quad \partial B.
  \end{align*}
  Suppose that $\partial_t u - \cL_t^{\om}\, u \leq 0$ on $Q$.  Then, for any $k \geq 0$ and $p, p'\in (1, \infty)$, 
  \begin{align}\label{eq:energy:est}
    &\frac{1}{|I|}\, \Norm{\ze \eta^2 (u-k)_+^2}{1, \infty, Q}
    \,+\,
    \frac{1}{|I|}
    \int_{I} \ze(t)\; \frac{\cE_{t}^{\om}(\eta (u_t-k)_+)}{|B|} \, \md t
    \nonumber\\[1ex]
    &\mspace{72mu}\leq\;
    4\, \Norm{1 \vee \mu^{\om}}{p, p', Q}\,
    \Big(
      \norm{\nabla \eta}{\infty}{E}^2
      +
      \|\ze'\|_{\raisebox{-.0ex}{$\scriptstyle L^{\raisebox{.2ex}{$\scriptscriptstyle {\!\infty\!}$}} (I)$}}
    \Big)\, \Norm{(u-k)_+^2}{p_*, p_*', Q},
  \end{align}
 with $p_* \ldef p/(p-1)$ and $p_*' \ldef p'/(p'-1)$. 
\end{lemma}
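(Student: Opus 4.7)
The plan is to prove this Caccioppoli-type energy estimate by testing the subsolution inequality $\partial_t u - \cL_t^\om u \leq 0$ against the nonnegative, compactly supported function $\phi \ldef \zeta \eta^2 (u-k)_+$. Writing $v_t \ldef (u_t-k)_+$, multiplying by $\phi$, summing over $x \in B$, and integrating from $s_1$ to an arbitrary $\tau \in I$, the boundary condition $\eta \equiv 0$ on $\partial B$ ensures that discrete summation by parts in space produces the clean identity $\sum_{x \in B} \phi \cL_t^\om u = -\scpr{\nabla(\eta^2 v_t)}{\om_t \nabla u_t}{E}$ without any boundary contribution, while the chain rule $v_t \partial_t u_t = \tfrac{1}{2}\partial_t v_t^2$ combined with integration by parts in $t$ (using $\zeta(s_1) = 0$) generates the boundary term $\tfrac{1}{2}\zeta(\tau) \sum_x \eta^2 v_\tau^2$ and the time-derivative error $-\tfrac{1}{2}\int_{s_1}^\tau \zeta' \sum_x \eta^2 v_t^2 \, dt$.

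The key observation for the spatial term is the decomposition $\nabla u_t = \nabla v_t + \nabla \min(u_t, k)$ together with the sign property $\scpr{\nabla(\eta^2 v_t)}{\om_t \nabla \min(u_t,k)}{E} \geq 0$, which I check pointwise on each edge: on edges with $u_t > k$ at both endpoints $\nabla \min(u_t, k) \equiv 0$; on edges with $u_t \leq k$ at both endpoints $v_t \equiv 0$, hence $\nabla(\eta^2 v_t) = 0$; and on the ``mixed'' edges, say with $u_t(e^-) \leq k < u_t(e^+)$, both $\nabla(\eta^2 v_t)(e) = \eta^2(e^+)(u_t(e^+)-k)$ and $\nabla \min(u_t, k)(e) = k - u_t(e^-)$ are nonnegative. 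Discarding this nonnegative contribution reduces the task to bounding $\scpr{\nabla(\eta^2 v_t)}{\om_t \nabla v_t}{E}$ from below. Applying the discrete product rule \eqref{eq:rule:prod} in the form $\nabla(\eta^2 v) = \av{\eta^2}\nabla v + 2\av{v}\av{\eta}\nabla \eta$, using Young's inequality on the cross term together with the convexity bounds $\av{f}^2 \leq \av{f^2}$ applied to both $\eta$ and $v$, and combining with the pointwise inequality $\av{\eta^2}(\nabla v)^2 \geq \tfrac{1}{2}(\nabla(\eta v))^2 - \av{v^2}(\nabla \eta)^2$ (itself a consequence of the product rule for $\nabla(\eta v)$), I obtain the lower bound
\[
\scpr{\nabla(\eta^2 v_t)}{\om_t \nabla v_t}{E} \;\geq\; \tfrac{1}{4}\, \cE_t^\om(\eta v_t) \,-\, C\, \|\nabla \eta\|_\infty^2 \sum_{e \in E} \om_t(e) \av{v_t^2}(e).
\]

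To finish, I combine the time and spatial contributions, take the supremum over $\tau \in I$, and divide by $|I||B|$; then I use the identity $\sum_e \om_t \av{v_t^2} = \tfrac{1}{2}\sum_{x \in B}\mu_t^\om\, v_t^2$ to convert the edge sum into a vertex sum weighted by $\mu^\om$, the trivial bound $\eta^2 \leq 1 \leq 1 \vee \mu_t^\om$ to unify both error terms under a single factor, and H\"older's inequality with conjugate exponents $(p, p_*)$ in space and $(p', p_*')$ in time. This produces the factor $\Norm{1 \vee \mu^\om}{p,p',Q} \, \Norm{(u-k)_+^2}{p_*, p_*', Q}$ on the right-hand side and, after rebalancing constants, yields the claimed inequality. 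The main potential obstacle is precisely the mixed-edge cross term handled above: without the sign argument one would need to control $(u_t(e^+) - k)(k - u_t(e^-))$ in terms of $(u_t-k)_+^2$ alone, which is problematic since $(u_t-k)_+$ vanishes at the endpoint with $u_t \leq k$, so the sign observation is essential for the estimate to close.
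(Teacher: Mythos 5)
Your proof is correct and follows essentially the same route as the paper's: test against $\zeta\eta^2(u-k)_+$, use an edge-wise sign argument to replace $\nabla(\eta^2 v_t)\cdot\nabla u_t$ by $\nabla(\eta^2 v_t)\cdot\nabla v_t$ (your decomposition via $\nabla\min(u_t,k)$ is exactly the paper's ``four-case'' pointwise check phrased differently), apply the discrete product rule to compare with $\cE_t^{\om}(\eta v_t)$, integrate in time exploiting $\zeta(s_1)=0$, and finish with H\"older in space and time. The only cosmetic difference is in the spatial algebra: the paper uses the single Jensen step $\av{\eta}^2\le\av{\eta^2}$ to produce $\cE_t^{\om}(\eta v_t)$ with coefficient~$1$, whereas your Young-inequality splitting yields only coefficient~$\tfrac14$ and thus, after rescaling, a final constant somewhat larger than the stated~$4$ --- immaterial for the downstream applications of the lemma.
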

\begin{proof}
  Let $k \geq 0$ and consider a function $u$ such that $\partial_t u \leq \cL_t^{\om}\, u$ on $Q = I \times B$.  To lighten notation, we set $v = (u-k)_+$.  By using the discrete version of the product rule \eqref{eq:rule:prod}, we obtain for any fixed $t \in (s_1, s_2)$ that
  \begin{align}\label{eq:energy:est:1}
    \scpr{\nabla (\eta v_t)}{\om_t \nabla (\eta v_t)}{E}
    &\;\leq\;
    \scpr{\nabla (\eta^2 v_t)}{\om_t \nabla v_t}{E}
    \,+\, \scpr{\av{v_t}^2}{\om_t (\nabla \eta)^2}{E},
  \end{align}
  where we used that $\av{\eta}^2(e) \leq \av{\eta^2}(e)$ by Jensen's inequality.  Further, by distinguishing four cases it follows that $\nabla(\eta^2 v_t)(e) (\nabla v_t)(e) \leq \nabla (\eta^2 v_t)(e) (\nabla u_t)(e)$ for any $e \in E$.  Hence,
  \begin{align*}
    \scpr{\nabla (\eta^2 v_t)}{\om_t \nabla v_t}{E}
    \;\leq\;
    \scpr{\nabla (\eta^2 v_t)}{\om_t \nabla u_t}{E}
    \;\leq\;
    \scpr{\eta^2 v_t}{-\partial_t u_t}{V}.
  \end{align*}
  Since the map $s \mapsto (s-k)_+$ is continuous on $\bbR$ with piecewise continuous and bounded derivative, we get that $\partial_t (u_t-k)_+^2 = 2 (u_t-k)_+\, \partial_t u_t$.  In particular,
  \begin{align}\label{eq:energy:est:2}
    \scpr{\nabla (\eta^2 v_t)}{\om_t \nabla v_t}{E}
    \;\leq\;
    -\frac{1}{2}\partial_t \scpr{\eta^2\!}{v_t^2}{V}.
  \end{align}
  By combining \eqref{eq:energy:est:1} and \eqref{eq:energy:est:2}, we deduced that
  \begin{align}\label{eq:energy:est:3}
    \partial_t \Norm{\eta^2 v_t^2}{1, B}
    \,+\,
    \frac{\cE_{t}^{\om}\big(\eta v_t\big)}{|B|}
    &\;\leq\;
    2\norm{\nabla \eta}{\infty}{E}^2\,
    \Norm{v_t^2 \mu_t^{\om}}{1, B}.
  \end{align}
  Moreover, since $\ze(s_1) = 0$,
  \begin{align*}
    \int_{s_1}^{s}\!\!
      \ze(t)\, \partial_t \Norm{\eta^2 v_t^2}{1, B}\;
    \md t
    &\;=\;
    \int_{s_1}^{s}
      \Big(
        \partial_t\big( \ze(t) \Norm{\eta^2 v_t^2}{1, B} \big)
        -
        \ze'(t) \Norm{\eta^2 v_t^2}{1,B}
      \Big)\;
    \md t
    \\[.5ex]
    &\;\geq\;
    \ze(s) \Norm{\eta^2 v_s^2}{1, B}
    \,-\,
    \|\ze'\|_{\raisebox{-.0ex}{$\scriptstyle L^{\raisebox{.2ex}{$\scriptscriptstyle {\!\infty\!}$}} (I)$}}
    \, \int_{s_1}^{s_2} \Norm{v_t^2}{1, B}\; \md t
  \end{align*}
  for any $s \in (s_1, s_2]$.  Thus, by multiplying both sides of \eqref{eq:energy:est:3} with $\ze$ and integrating the resulting inequality over $[s_1, s]$ for any $s \in I$, the assertion \eqref{eq:energy:est} follows by an application of H\"older's and Jensen's inequality.
\end{proof}
\begin{proof}[Proof of Theorem~\ref{thm:max:ineq}]
  For any $p, p' \in (1, \infty)$, let $p_* \ldef p/(p-1)$ and $p_*' \ldef p'/(p'-1)$ be the H\"older conjugate of $p$ and $p'$, respectively, and set
  \begin{align}\label{eq:def:alpha}
    \al
    \;\ldef\;
    \frac{1}{p_*}
    \,+\,
    \frac{1}{p_*'}\bigg(1 - \frac{1}{\rho} \bigg)\, \frac{q'}{q'+1},
  \end{align}
  where $\rho$ is defined in \eqref{eq:def:rho}.  Notice that for any $p, p', q, q' \in (1,\infty]$ for which \eqref{eq:cond:pq} is satisfied, $\al > 1$ and therefore $1/\al_* = 1 - 1/\al > 0$.  In particular, $\al > 1$ implies that $\al p_*' > q'/(q'+1)$ and $\al p_* \leq \rho$ so that Lemma~\ref{lemma:interp} is applicable.  Suppose that $n \geq 2(N_1(x_0) \vee N_2(x_0))$.  The remaining part of the proof comprises two steps, a ``one-step estimate'' and the iteration scheme.

  \textsc{Step~1.} Let $1/2 \leq \si' < \si \leq 1$ and $0 \leq k < l$ be fixed constants.  We write $I_{\si} \ldef [t_0 - \si n^2, t_0]$, $B_{\si} \ldef B(x_0, \si n)$ and $Q_{\si} \ldef I_{\si} \times B_{\si}$ to simplify notation.  Note that $|I_{\si}|/|I_{\si'}| \leq 2$ and $|B_{\si}| / |B_{\si'}| \leq 2^d C_{\mathrm{reg}}/c_{\mathrm{reg}}$.  Let us stress the fact, that, due to the discrete structure of the underlying space, the discrete balls $B_{\si'}$ and $B_{\si}$ may coincide even if $\si' < \si$.  In order to ensure that $B_{\si'} \subsetneq B_{\si}$, we assume in the sequel that $(\si - \si') n \geq 1$.  In this case, we can define a cut-off function $\eta\!:V \to [0,1]$ in space having the properties that $\supp \eta \subset B_{\si}$, $\eta \equiv 1$ on $B_{\si'}$, $\eta \equiv 0$ on $\partial B_{\si}$ and $\norm{\nabla \eta}{\infty}{E} \leq 1/((\si - \si') n)$.  Moreover, let $\ze\!:\bbR \to [0,1]$ be a smooth cut-off function in time such that $\ze \equiv 1$ on $I_{\si'}$, $\ze \equiv 0$ on $(-\infty, t_0 - \si n^2]$ and $\| \ze' \|_{\raisebox{-0ex}{$\scriptstyle L^{\raisebox{.1ex}{$\scriptscriptstyle \!\infty$}} (\bbR)$}} \leq  1 / ((\si - \si') n^2)$. 

  The constant $c \in (0, \infty)$ appearing in the computations below is independent of $n$ but may change from line to line.  First, by H\"older's inequality, we get
  \begin{align}
    &\Norm{(u-l)_+^2}{p_*, p_*', Q_{\si'}}
    \nonumber\\[1ex]
    &\mspace{36mu}\leq\;
    \Norm{(u-k)_+^2}{\al p_*, \al p_*', Q_{\si'}}\,
    \Norm{\indicator_{\{u \,\geq\, l\}}}{\al_* p_*, \al_* p_*', Q_{\si'}}
    \nonumber\\[1ex]
    &\mspace{31mu}\overset{\!\eqref{eq:interp}\!}{\;\leq\;}
    \Big(
    \Norm{(u-k)_+^2}{1, \infty, Q_{\si'}} \,+\,
    \Norm{(u-k)_+^2}{\rho,q'/(q'+1),Q_{\si'}}
    \Big)\,
    \Norm{\indicator_{\{u \,\geq\, l\}}}{p_*, p_*', Q_{\si'}}^{1/\al_*}.
  \end{align}
  Then, the local space-time Sobolev inequality and the energy estimate yields
  \begin{align*}
    \Norm{(u-k)_+^2}{\rho,q'/(q'+1),Q_{\si'}}
    &\;\leq\;
    c\, \Norm{\ze \eta^2(u-k)_+^2}{\rho,q'/(q'+1),Q_{\si}}
    \\[.5ex]
    &\;\overset{\!\!\eqref{eq:sob:ineq:weight}\!\!}{\leq\;}
    c\, n^2\, \Norm{1 \vee \nu^{\om}}{q,q',Q_{\si}}
    \bigg(
      \frac{1}{|I_{\si}|}
      \int_{I_{\si}}\!
        \ze(t)\, \frac{\cE_t^{\om}(\eta(u_t-k)_+)}{|B_{\si}|}\;
      \md t
    \bigg)
    \\[.5ex]
    &\;\overset{\!\!\eqref{eq:energy:est}\!\!}{\leq\;}
    c\;
    \frac{
      \Norm{1 \vee \mu^{\om}}{p, p', Q_{\si}}
      \Norm{1 \vee \nu^{\om}}{q,q',Q_{\si}}
    }{(\si-\si')^{2}}\;
    \Norm{(u-k)_+^2}{p_*, p_*', Q_{\si}}
  \end{align*}
  and
  \begin{align}
    \Norm{(u-k)_+^2}{1, \infty, Q_{\si'}}
    &\;\leq\;
    c\, \Norm{\ze \eta^2(u-k)_+^2}{1, \infty, Q_{\si}}
    \nonumber\\[.5ex]
    &\;\overset{\!\!\eqref{eq:energy:est}\!\!}{\leq\;}
    c\,
    \frac{\Norm{1 \vee \mu^{\om}}{p, p',Q_{\si}}}
    {(\si - \si')^2}\;
    \Norm{(u-k)_+^2}{p_*, p_*', Q_{\si}}.
    \label{eq:maximal:estimate:time}
  \end{align}
  By combining the estimates above and using the fact that
  \begin{align*}
    \Norm{\indicator_{\{u \,\geq\, l\}}}{p_*, p_*', Q_{\si'}}
    \;\leq\;
    c\, \Norm{\indicator_{\{u-k \,\geq\, l-k\}}}{p_*, p_*', Q_{\si}}
    \;\leq\;
    \frac{c}{(l-k)^2}\, \Norm{(u-k)_+^2}{p_*, p_*', Q_{\si}},
  \end{align*}
  we finally obtain that
  \begin{align*}
    \Norm{(u-l)_+^2}{p_*, p_*', Q_{\si'}}
    \;\leq\;
    c\,
    \frac{\Norm{1 \vee \mu^{\om}}{p, p', Q(n)}\,
      \Norm{1 \vee \nu^{\om}}{q,q',Q(n)}}
    {(l-k)^{2/\al_*}(\si-\si')^2}\,
    \Norm{(u-k)_+^2}{p_*, p_*', Q_{\si}}^{1+1/\al_*}.
  \end{align*}
  Set $\vp(l, \si') \ldef \Norm{(u-l)_+^2}{p_*, p_*', Q_{\si'}}$ and $M \ldef c\, \Norm{1 \vee \mu^{\om}}{p, p', Q(n)}\, \Norm{1 \vee \nu^{\om}}{q,q',Q(n)}$.  Then, the inequality above reads
  \begin{align}\label{eq:iteration:max:ineq}
    \vp(l, \si')
    \;\leq\;
    \frac{M}{(l-k)^{2/\al_*} (\si-\si')^2}\,
    \vp(k, \si)^{1+1/\al_*}.
  \end{align}
  Note that the function $[0,\infty) \ni k \mapsto \vp(k, \si)$ is non-increasing for any $\si \in [1/2, 1]$.

  \textsc{Step~2.} Suppose that $n \geq 2(N_1(x_0) \vee N_2(x_0))$.  Let $h \geq 0$ be arbitrary but fixed and, for any $\De \in [0, 2/(d+2) )$, suppose that $1/2 \leq \si' < \si \leq 1$ are chosen in such a way that $\si-\si' > 4n^{-\De}$.  Further, for $j \in \bbN_0$ define 
  \begin{align*}
    \si_j \;\ldef\; \si' + 2^{-j}(\si-\si'),
    \qquad 
    k_j \ldef h + K(1-2^{-j})
  \end{align*}
  with $K \ldef 2^{(1+\al_*)^2}(M/(\si - \si')^{2})^{\al_*/2} \vp(h, \si)^{1/2}$. Set $J \ldef \lceil (d \ln n) / (2 \al_* \ln 2) \rceil$.  Obviously, $J \geq 1$.  Since $\al_* \geq (d+2)/2$, it follows that
  \begin{align*}
    (\si_{j-1} - \si_j) n
    \;=\;
    2^{-j} (\si - \si') n
    \;>\;
    2 n^{1-\De - d/(2\al_*)}
    \;\geq\;
    2,
    \qquad \forall\, j = 1, \ldots, J.
  \end{align*}
  We claim that, by induction,
  \begin{align}\label{eq:iteration:claim}
    \vp(k_j, \si_j)
    \;\leq\;
    \frac{\vp(h, \si)}{r^j}
    \qquad \forall\, j \in \{0, \ldots, J\}
  \end{align}
  where $r = 2^{2(1+\al_*)}$.  Indeed, for $j = 0$ the assertion is obvious.  Now suppose that \eqref{eq:iteration:claim} holds for any $j \in \{0, \ldots, J-1\}$. Then, in view of \eqref{eq:iteration:max:ineq}, we obtain
  \begin{align*}
    \vp(k_{j+1}, \si_{j+1})
    &\overset{\mspace{-7mu}\eqref{eq:iteration:max:ineq}\mspace{-7mu}}{\;\leq\;}
    M \bigg(\frac{2^{j+1}}{K}\bigg)^{\!\!2/\al_*}\,
    \bigg(\frac{2^{j+1}}{\si-\si'}\bigg)^{\!\!2}\,
    \vp(k_{j}, \si_{j})^{1+1/\al_*}
    \\[.5ex]
    &\;\leq\;
    M \bigg(\frac{2^{j+1}}{K}\bigg)^{\!\!2/\al_*}\,
    \bigg(\frac{2^{j+1}}{\si-\si'}\bigg)^{\!\!2}\,
    \bigg(\frac{\vp(h, \si)}{r^{j}}\bigg)^{\!\!1+1/\al_*}
    \;\leq\;
    \frac{\vp(h, \si)}{r^{j+1}},  
  \end{align*}    
  which completes the proof of the claim \eqref{eq:iteration:claim}.  Moreover, since $(n^d 2^{2J}) / r^{J} \leq 1$ and $(\si_{J-1} - \si_{J})n \geq 1$, we obtain that
  \begin{align*}
    \max_{(t,x) \in Q_{\si_{\!J}}} (u(t,x) - k_{J})_+^2
    &\;\leq\;
    c \, n^d\, \Norm{(u-k_{J})_+^2}{1, \infty, Q_{\si_{\!J}}}
    \\[.5ex]
    &\overset{\mspace{-7mu}\eqref{eq:maximal:estimate:time}\mspace{-7mu}}
    {\;\leq\;}
    c\, n^d 2^{2J}\,
    \frac{\Norm{1 \vee \mu^{\om}}{p, p', Q(n)}}{(\si - \si)^2}\,
    \vp(k_{J-1}, \si_{J-1})
    \\[.5ex]
    &\overset{\mspace{-7mu}\eqref{eq:iteration:claim}\mspace{-7mu}}{\;\leq\;}
    c\, \frac{\Norm{1 \vee \mu^{\om}}{p, p', Q(n)}}{(\si - \si)^2}\,
    \vp(h, \si).
  \end{align*}
  Hence,
  \begin{align*}
    \max_{(t,x) \in Q_{\si'}(n)} u(t,x)
    \;\leq\;   
    h + K + c\,
    \bigg(
      \frac{\Norm{1 \vee \mu^{\om}}{p, p', Q(n)}}{(\si - \si)^2}
    \bigg)^{\!\!1/2}\,
    \Norm{(u-h)_+}{2p_*, 2p'_*,Q_{\si}(n)},
  \end{align*}
  and the assertion \eqref{eq:max:ineq} follows with $\ka \ldef \al_*/2$.
\end{proof}
As an application of Theorem~\ref{thm:max:ineq} we derive a near-diagonal bound for the heat kernel, which we now introduce.  For $s \in \bbR$ and $x \in V$, let $\Prob_{s, x}^{\om}$ be the probability measure on the space of $V$-valued c\`adl\`ag functions on $\bbR$, under which the coordinate process $(X_t : t \in \bbR)$ is the continuous-time Markov chain on $V$ starting at time $s$ in $x$ with time-dependent generator $\cL_t^{\om}$ as defined in \eqref{eq:def:generator}.  
Recall that, for any $x, y \in V$ and $s, t \in \bbR$ with $t \geq s$ we denote by $p^{\om}(s,x;t,y)$ the transition density (or heat kernel associated to $\cL_t^{\om}$), that is $p^{\om}(s,x;t,y) = \Prob^{\om}_{s, x}[X_t = y]$.  Note that the Markov property implies immediately that, for any $s \in \bbR$ and $x \in V$, the function $(t,y) \mapsto u_t(y) \ldef p^{\om}(s,x;s+t,y)$ solves
\begin{align*}
  \partial_t u_t(y) \;=\; \cL_t^{\om} u_t (y),
  \qquad \forall\, t > 0,  \quad y \in V.  
\end{align*}
\begin{corro}[Near-diagonal heat kernel upper bound]  \label{cor:hk_upper}
  Suppose Assumption~\ref{ass:graph}-(i) and (ii) hold.  Then, for any $x_1, x_2 \in V$, $s \geq 0$ and $p, p', q, q' \in [1, \infty]$ satisfying \eqref{eq:cond:pq}, there exist  $\ka' \equiv \ka'(p, p', q, q', d') \in (0, \infty)$, $C_3 \equiv C_3(p, p', q, q', d') < \infty$ and $N_6(x_2)<\infty$ such that for all $\sqrt{t} \geq N_6(x_2)$ and $y \in B(x_2, \frac{1}{2}\sqrt{t})$,
  \begin{align}\label{eq:hk:upper_bound}
    p^{\om}(s,x_1;s+t,y)
    \;\leq\;
    C_3\,
    \Big(
      \Norm{1 \vee \mu^{\om}}{p, p'\!, Q}\,
      \Norm{1 \vee \nu^{\om}}{q, q'\!, Q}
    \Big)^{\!\!\ka'}\,t^{-d/2},
  \end{align}
  where $Q = [0, t] \times B(x_2, \sqrt{t})$.
\end{corro}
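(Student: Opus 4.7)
My plan is to apply Theorem~\ref{thm:max:ineq} to the heat kernel on a diffusively scaled cylinder and then to exploit mass conservation $\sum_{z \in V} u(r, z) \leq 1$ to upgrade the resulting bound. Setting $n \ldef \sqrt{t}$, $x_{0} \ldef x_{2}$, $t_{0} \ldef s + t$, so that $Q(n) = [s, s+t] \times B(x_{2}, \sqrt{t})$, and letting $u(r, z) \ldef p^{\om}(s, x_{1}; r, z)$, which solves $\partial_r u - \cL_r^{\om} u = 0$ as recalled just before the statement, I will study $J(\sigma) \ldef \sup_{Q_{\sigma}(n)} u$ for $\sigma \in [1/2, 1]$. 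The elementary interpolation $\sum_{z \in B} u(r, z)^{2 p_{*}} \leq J(\sigma)^{2 p_{*} - 1} \sum_{z \in V} u(r, z) \leq J(\sigma)^{2 p_{*} - 1}$, together with the volume lower bound $|B(x_{2}, \sigma n)| \geq c\, (\sigma n)^{d}$ provided by Assumption~\ref{ass:graph}(i), yields after space--time averaging
\begin{align*}
  \Norm{u}{2 p_{*}, 2 p_{*}', Q_{\sigma}(n)}
  \;\leq\; c\, n^{-d/(2 p_{*})}\, J(\sigma)^{(2 p_{*} - 1)/(2 p_{*})}.
\end{align*}

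Feeding this into Theorem~\ref{thm:max:ineq} with $h = 0$ on $Q_{\sigma}(n)$, for any $1/2 \leq \sigma' < \sigma \leq 1$ obeying $\sigma - \sigma' > 4 n^{-\Delta}$ I obtain
\begin{align*}
  J(\sigma')
  \;\leq\;
  c\, M^{\kappa}\, (\sigma - \sigma')^{-2 \kappa}\, n^{-d/(2 p_{*})}\, J(\sigma)^{1 - 1/(2 p_{*})},
\end{align*}
where $M \ldef \Norm{1 \vee \mu^{\om}}{p, p', Q(n)}\, \Norm{1 \vee \nu^{\om}}{q, q', Q(n)}$. Since $1 - 1/(2 p_{*}) < 1$, Young's inequality with conjugate exponents $2 p_{*}$ and $2 p_{*}/(2 p_{*} - 1)$ recasts this in the form
\begin{align*}
  J(\sigma')
  \;\leq\;
  \tfrac{1}{2}\, J(\sigma) \,+\, c\, M^{2 \kappa p_{*}}\, (\sigma - \sigma')^{-4\kappa p_{*}}\, n^{-d},
\end{align*}
which is precisely the hypothesis of a standard Giaquinta--Giusti iteration lemma on $[1/2, 1]$. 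Applying that lemma produces $J(1/2) \leq C_{3}\, M^{\kappa'}\, n^{-d}$ with $\kappa' \ldef 2 \kappa p_{*}$; since $(s + t, y) \in Q_{1/2}(n)$ for any $y \in B(x_{2}, \sqrt{t}/2)$, the desired bound~\eqref{eq:hk:upper_bound} will follow.

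The one delicate point will be that Theorem~\ref{thm:max:ineq} is available only under the restriction $\sigma - \sigma' > 4 n^{-\Delta}$ for some $\Delta \in (0, 2/(d+2))$. Consequently, the dyadic schedule underlying the Giaquinta--Giusti iteration must be truncated after $i_{0} = O(\Delta \log n)$ steps, generating a residual term of the form $\theta^{i_{0}} J(1) \lesssim n^{-c \Delta}$ (with $c$ depending on Giaquinta's contraction parameter). To close the argument I will pick $\Delta$ sufficiently close to $2/(d+2)$ and the contraction parameter sufficiently close to one so that $c \Delta > d$, and require $\sqrt{t} \geq N_{6}(x_{2}) \ldef 2 (N_{1}(x_{2}) \vee N_{2}(x_{2}))$, enlarged if necessary so that the residual is dominated by $M^{\kappa'} n^{-d}$.
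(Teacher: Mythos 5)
Your proof is correct and takes a genuinely different route from the paper, while sharing the same two essential ingredients: the maximal inequality of Theorem~\ref{thm:max:ineq} and mass conservation $\sum_{z}u(r,z)\leq 1$. The paper iterates the \emph{multiplicative} inequality
$\Norm{u}{\infty,\infty,Q_{\sigma_{j-1}}}\leq 2^{2\kappa j}K\,\Norm{u}{1,1,Q(n)}^{\gamma}\,\Norm{u}{\infty,\infty,Q_{\sigma_j}}^{1-\gamma}$
obtained by H\"older interpolation, then observes that after $J\approx(\ln n)/(4d\ln 2)$ steps the exponent on $\Norm{u}{1,1,Q(n)}$ tends to $1$ while $|B(x_2,n)|^{(1-\gamma)^J}$ stays uniformly bounded in $n$, so the residual causes no loss. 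You instead convert the maximal inequality into an \emph{additive} inequality via a pointwise interpolation and Young's inequality and then invoke a Giaquinta--Giusti iteration lemma — a more classical de~Giorgi device, and arguably more transparent. Both proofs must confront the fact that the maximal inequality is only valid on scales separated by $4n^{-\Delta}$, and you correctly identify that the Giaquinta schedule must be truncated after $i_0=O(\Delta\log n)$ steps, producing a residual $\theta^{i_0}J(1)$ that must be absorbed. Your final paragraph is a little imprecise on which knob to turn: the restriction $\Delta<2/(d+2)$ means $\Delta$ alone can never yield $c\Delta>d$ with $c$ of order one; what actually needs to be tuned is the Young contraction constant $\theta$ (taken \emph{small}, not "close to one") or, equivalently, the geometric ratio $\lambda$ of the Giaquinta sequence (taken close to $1$, which magnifies the exponent $\log_2(1/\theta)/\log_2(1/\lambda)$). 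Since these are universal parameters and $J(1)\leq 1$ pointwise, one can indeed arrange $\theta^{i_0}\lesssim n^{-d}$ for any fixed $\Delta\in(0,2/(d+2))$ — there is no need to push $\Delta$ toward its supremum — at the cost of a larger (but finite) constant $C_3$; with that adjustment of phrasing your argument closes.
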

\begin{proof}
  We wish to apply the maximal inequality of Theorem~\ref{thm:max:ineq} iteratively to the function $(t,y) \mapsto u_t(y) \equiv p^{\om}(s, x_1;s+t,y)$ on the cylinder $Q(n) \equiv Q(t_0, x_0, n)$ with $t_0 = t$, $x_0 = x_1$ and $n = \sqrt{t}$. We fix $\si' \ldef 1/2$, $\si \ldef 1$, and set $\si_j = \si - 2^{-j}(\si -\si')$ for any $j \in \bbN_0$.  Note that $\si_j \uparrow \si$ and $\si_0 = \si'$. By H\"older's inequality we have
  \begin{align*}
    \Norm{u}{2\al p_*, 2\al p'_*, Q(n)}
    \;\leq\;
    \Norm{u}{1,1,Q(n)}^{\ga}\, \Norm{u}{\infty,\infty,Q(n)}^{1-\ga},
  \end{align*}    
  with $\ga = 1/\max\{2\al p_*, 2\al p'_*\}$ and $\al$ as defined in \eqref{eq:def:alpha}.  Set $J \ldef \lfloor (\ln n) / (4d \ln 2) \rfloor$ and $\De \ldef 1/2d < 2/(d+2)$.  Then, for all $n \geq N_4(x_2) \ldef 2(N_1(x_2) \vee N_2(x_2)) \vee 2^{12d}$ it holds that $J \geq 3$ and $(\si_{j}-\si_{j-1}) > 4 n^{-\De}$ for all $j \in \{1, \ldots, J\}$.  Hence, an application of the maximal inequality~\eqref{eq:max:ineq} yields
  \begin{align*}
    \Norm{u}{\infty, \infty, Q_{\si_{j-1}}(n)}
    \;\leq\;
    2^{2\ka j} K\, \Norm{u}{1,1,Q(n)}^{\ga}\,
    \Norm{u}{\infty,\infty,Q_{\si_j}(n)}^{1-\ga}
    \qquad \forall\, j \in \{1, \ldots, J\},
  \end{align*}
  where we introduced $K = c\,\big(\Norm{1 \vee \mu^{\om}}{p, p'\!, Q(n)}\, \Norm{1 \vee \nu^{\om}}{q, q'\!,Q(n)}\big)^{\ka}$ to simplify the notation.  By iterating the inequality above, we get
  \begin{align*}
    \Norm{u}{\infty, \infty, Q_{1/2}(n)}
    &\;\leq\;
    2^{2\ka \sum_{j=0}^{J-1} (j+1)(1 - \ga)^j}\,
    \Big(
      K\, \Norm{u}{1, 1, Q(n)}^{\ga}
    \Big)^{\!\sum_{j=0}^{J-1} (1 - \ga)^j}
    \Norm{u}{\infty, \infty, Q_{\si_{\!J}}(n)}^{\ga (1 - \ga)^J}
    \\[.5ex]
    &\;\leq\;
    2^{2\ka / \ga^2}\, K^{1/\ga}\, \Norm{u}{1, 1, Q(n)}^{1 - (1-\ga)^J}\,
    \Norm{u}{\infty, \infty, Q_{\si_{\!J}}(n)}^{ (1 - \ga)^J}.
  \end{align*}
  Further, note that $u_t(y) = p^{\om}(s,x_1;s+t,y) \leq 1$ for all $t \geq 0$ and $y \in V$,
  \begin{align*}
    \sum_{y \in B(x_2, n)}\mspace{-12mu} u_t(y)
    \;=\;
    \Prob_{s, x_1}^{\om}\!\big[X_t \in B(x_2,n)\big]
    \;\leq\;
    1,
    \qquad \forall\, t \geq 0,
  \end{align*}
  and $|B(x_1, n)|^{(1-\ga)^{J}} \leq c < \infty$ uniformly in $n$.  Hence, by using the volume regularity, we conclude that
  \begin{align*}
    \Norm{u}{\infty, \infty, Q_{1/2}(n)}
    \;\leq\;
    c\, K^{1/\ga}\, |B(x_2, n)|^{(1-\ga)^J - 1}
    \;\leq\;
    c\, K^{1/\ga}\, n^{-d}.
  \end{align*}
  Since $(t,y) \in Q_{1/2}(t, x_1, \sqrt{t})$ for any $y \in B(x_1, \frac{1}{2} \sqrt{t})$, the assertion follows.
\end{proof}

\subsection{Proof of the oscillation bound} \label{sec:proof_osc}
In this subsection we prove Theorem~\ref{thm:osc}.  Inspired by the strategy that has been used in \cite{WYW06} to prove H\"older regularity for parabolic equations, we start by constructing a continuously differentiable version of the function $(0, \infty) \ni r \mapsto (-\ln r)_+$.  Consider the function $g\!: (0, \infty) \to [0, \infty)$,
\begin{align}\label{eq:def:g}
  g(r)
  \;\ldef\;
  \begin{cases}
    -\ln r, \quad &r \in (0, c_*],
    \\[.5ex]
    \dfrac{(r-1)^2}{2 c_* (1-c_*)},
    \quad &r \in (c_*, 1],
    \\[.5ex]
    0, &r \in (1, \infty),
  \end{cases}
\end{align}
where $c_* \in [1/4, 1/3]$ is the smallest solution of the equation $2 c \ln(1/c) = 1-c$.  By construction, the function $g$ is convex, non-increasing and in $C^1((0, \infty))$.
\begin{lemma}
  Suppose that $u > 0$ satisfies $\partial_t u - \cL_t^{\om}\, u \geq 0$ on $Q$, and let $g$ be the function  defined in \eqref{eq:def:g}.   Further, consider a cut-off function $\eta\!:V \to [0, 1]$  with
  \begin{align*}
    \supp \eta \;\subset\; B
    \qquad\text{and} \qquad
    \eta \;\equiv\; 0 \quad \text{on } \partial B.
  \end{align*}
  Then,
  \begin{align}\label{eq:energy:ln}
    \partial_t \Norm{\eta^2 g(u_t)}{1, B}
    \,+\,
    \frac{\cE_{t}^{\om, \eta^2}\!\big(g(u_t)\big)}{6 |B|}
    \;\leq\;
    6\, \Norm{1 \vee \mu_t^{\om}}{1, B}\,
    \osr(\eta)^2\, \norm{\nabla \eta}{\infty}{E}^2,
  \end{align}
  where $\osr(\eta) \ldef \max\big\{\eta(y) / \eta(x) \vee 1 \mid \{x,y\} \in E,\; \eta(x) \ne 0\big\}$ and
  \begin{align*}
    \cE_t^{\om, \eta^2}\!(f)
    \;\ldef\;
    \sum_{e \in E} \big(\eta^2(e^+) \wedge \eta^2(e^-)\big) \om(e)\,
    ( \nabla f)(e)^2.
  \end{align*}
\end{lemma}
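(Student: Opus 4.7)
The plan is to test the supersolution inequality against $\eta^2 g'(u_t)$, which is a nonpositive function by construction of $g$.  Multiplying $\partial_t u_t \geq \cL_t^{\om} u_t$ by $\eta^2 g'(u_t)$, summing over $V$, and applying the chain rule in $t$ together with summation by parts in space yields
\begin{align*}
  \partial_t\! \sum_{x \in V}\! \eta^2(x)\, g(u_t(x))
  \;\leq\;
  -\!\sum_{e \in E}\! \om_t(e)\, \nabla\!\big[\eta^2\, g'(u_t)\big]\!(e)\, (\nabla u_t)(e).
\end{align*}
For each edge $e$ I will orient $e^\pm$ so that $u_t(e^+) \leq u_t(e^-)$, and apply the \emph{asymmetric} product rule
\begin{align*}
  \nabla\!\big[\eta^2\, g'(u_t)\big]\!(e)
  \;=\;
  \eta^2(e^+)\, \nabla g'(u_t)(e) \,+\, g'(u_t(e^-))\, \nabla \eta^2(e),
\end{align*}
splitting the RHS into a main term $I$ and a cross term $II$.

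The main term will be controlled via the pointwise chain-rule inequality
\begin{align*}
  (\nabla g(u_t))(e)^2
  \;\leq\;
  K\, (\nabla u_t)(e)\, (\nabla g'(u_t))(e),
\end{align*}
obtained by Cauchy--Schwarz applied to $g(a)-g(b) = \int_b^a g'(s)\,\md s$, together with the pointwise bound $(g')^2 \leq K g''$; the latter can be verified piecewise from the three-piece definition of $g$ and holds with $K \leq 3$.  Combined with the trivial $\eta^2(e^+) \geq \eta^2(e^+)\wedge\eta^2(e^-)$, this gives $I \leq -\cE_t^{\om, \eta^2}(g(u_t))/3$.  The crucial pointwise bound for the cross term is
\begin{align*}
  |g'(u_t(e^-))|\; |(\nabla u_t)(e)|
  \;\leq\;
  |(\nabla g(u_t))(e)|,
\end{align*}
which relies on the orientation: since $|g'|$ is non-increasing on $(0, \infty)$ (as $g'$ is non-decreasing and nonpositive), the fundamental theorem of calculus gives, writing $a = u_t(e^+) \leq u_t(e^-) = b$, that $|(\nabla g(u_t))(e)| = \int_a^b |g'(s)|\,\md s \geq |g'(b)|(b-a)$.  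Using $\nabla \eta^2 = 2\, \av{\eta}\, \nabla \eta$, Young's inequality with parameter $\ep = 1/6$, and the pointwise bound $\av{\eta}(e)^2 \leq \osr(\eta)^2\, (\eta^2(e^+)\wedge\eta^2(e^-))$, the cross term is bounded by $\cE_t^{\om,\eta^2}(g(u_t))/6 + 6\, \osr(\eta)^2\, \norm{\nabla \eta}{\infty}{E}^2 \sum_{e:\, e\cap B \neq \emptyset} \om_t(e)$.

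Adding the two estimates, dividing by $|B|$, and using $\sum_{e:\, e \cap B \neq \emptyset} \om_t(e) \leq |B|\, \Norm{\mu_t^\om}{1, B} \leq |B|\, \Norm{1 \vee \mu_t^\om}{1, B}$ yields \eqref{eq:energy:ln}.  The hardest step is identifying the right product-rule decomposition and orientation: the symmetric (averaged) version naturally produces $\av{g'(u_t)}(\nabla u_t)$, which for $g(r) = -\ln r$ behaves like $\sinh((\nabla g(u_t))(e))$ on edges across which $u_t$ varies by a large factor, and so prevents any clean pointwise bound by $|(\nabla g(u_t))(e)|$; the asymmetric choice above, combined with the orientation, side-steps this blow-up.
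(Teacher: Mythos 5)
Your approach is genuinely different from the paper's, and the core ideas are correct: rather than routing the pointwise estimate on each edge through a separate technical lemma (the paper's Lemma~\ref{lem:A1}), you do it in-line via an asymmetric product rule with a $u_t$-adapted orientation. The inequality $(g')^2 \leq 3 g''$ gives the main-term bound $I \leq -\tfrac{1}{3}\cE_t^{\om,\eta^2}(g(u_t))$, and the observation that $|g'(u_t(e^-))|\,|\nabla u_t(e)| \leq |\nabla g(u_t)(e)|$ (monotonicity of $|g'|$ together with the orientation $u_t(e^+)\leq u_t(e^-)$) is correct and neatly sidesteps the $\av{g'(u_t)}$ blow-up you identify as the reason the symmetric product rule fails.

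There is, however, a gap in the cross-term absorption. The pointwise bound you invoke, $\av{\eta}(e)^2 \leq \osr(\eta)^2\big(\eta^2(e^+)\wedge\eta^2(e^-)\big)$, is false on edges where $\eta$ vanishes at exactly one endpoint: there the right-hand side is zero while $\av{\eta}(e)^2 = \eta^2(e^-)/4 > 0$ (or symmetrically). Such edges are present in every application of this lemma --- the cut-offs in Lemmas~\ref{lemma:levelset:apriori} and~\ref{lemma:levelset:small} drop to zero one lattice step inside the boundary --- and on them the Young split carries the unbounded coefficient $\av{\eta}(e)^2/(\eta^2(e^+)\wedge\eta^2(e^-))$; note also that $|\nabla g(u_t)(e)|$ is genuinely unbounded there (e.g.\ when $u_t(e^+)\to 0$), so the chain of bounds cannot be repaired cheaply. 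This is exactly the case handled separately in the paper's Lemma~\ref{lem:A1} under $a\wedge b = 0$, where the estimate $-r g'(r)\leq 4/3$ is used instead. Your proof closes the same way: on an edge with $\eta(e^+)=0<\eta(e^-)$ (and $u_t(e^+)\leq u_t(e^-)$), bound the cross term directly by
\begin{align*}
  \om_t(e)\,\eta^2(e^-)\,\big|g'(u_t(e^-))\big|\,\big(u_t(e^-)-u_t(e^+)\big)
  \;\leq\;
  \tfrac{4}{3}\,\om_t(e)\,\eta^2(e^-)
  \;=\;
  \tfrac{4}{3}\,\om_t(e)\,|\nabla\eta(e)|^2,
\end{align*}
which is absorbed by $6\,\osr(\eta)^2\,\norm{\nabla\eta}{\infty}{E}^2\,\om_t(e)$; for the opposite alignment $\eta(e^-)=0<\eta(e^+)$ the cross term is nonpositive and needs no estimate.
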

\begin{proof}
  Since $\partial_t u - \cL_t^{\om} u \geq 0$ on $Q = I \times B$ and $u > 0$, we have
  \begin{align*}
    \partial_t \scpr{\eta^2}{g(u_t)}{V}
    &\;=\;
    \scpr{\eta^2 g'(u_t)}{\partial_t u_t}{V}
    \\[.5ex]
    &\;\leq\;
    \scpr{\eta^2 g'(u_t)}{\cL_t^{\om} u_t}{V}
    \;=\;
    -\scpr{\nabla (\eta^2 g'(u_t))}{\om_t \nabla u_t}{E}.
  \end{align*}
  Notice that $g'$ is piecewise differentiable and $1/3 g'(r)^2 \leq g''(r)$ for a.e.\ $r \in (0, \infty)$.  In particular, $-r g'(r) \leq 4/3$ for any $r \in (0, \infty)$.  Hence, by Lemma~\ref{lem:A1} we get
  \begin{align*}
    -\scpr{\nabla (\eta^2 g'(u_t))}{\om_t \nabla u_t}{E}
    \;\leq\;
    -\frac{1}{6}\cE^{\om, \eta^2}\!\big(g(u_t)\big)
    \,+\, 6\, \osr(\eta)^2\, \scpr{\nabla \eta}{\om_t \nabla \eta}{E}.
  \end{align*}
  Thus, by combining the estimates above and exploiting the fact that $g \geq 0$, the assertion \eqref{eq:energy:ln} follows.
\end{proof}
In the next lemma we show for a space-time harmonic function $u$ that, if the size of its sub-level set with respect to some $k_0$ is bounded from below by a fraction of the size of the time-space cylinder, then the size of the sub-level sets for fixed $t$ and a possibly larger constants, $k_j$, are bounded from below by a fraction of $B(n)$, provided that $t$ is close to $t_0$.  For that purpose, set for some $k_0 \in (-\infty, M_n)$,
\begin{align}\label{eq:def:k_j}
  k_j
  \;\ldef\;
  M_n - 2^{-j}\,\big(M_n - k_0\big),
  \qquad  j \in \bbN_0,
\end{align}
where $M_n \ldef \sup_{(t,x) \in Q(n)} u(t,x)$.  For $\eta\!: V \to [0, 1]$ with $\supp \eta \subset B(x_0, n)$ we write
\begin{align*}
  \Norm{u}{1, B(n), \eta^2}
  \;\ldef\;
  \frac{1}{\scpr{\eta^2}{1}{V}}\, \sum_{x \in B(x_0, n)} \eta^2(x)\, |u(x)|
\end{align*}
to denote the $\eta^2$-weighted $\ell^1$-norm of a function $u\!: V \to \bbR$.
\begin{lemma}\label{lemma:levelset:apriori}
  Let Assumption~\ref{ass:graph}-(i) be satisfied and $\si\in (0,1)$ be fixed.  For $t_0 \in \bbR$, $x_0 \in V$ and $\si n \geq N_1(x_0)$, suppose that $u > 0$ is such that $\partial_t u - \cL_t^{\om} u = 0$ on $Q(n)$.  Further, let $\eta\!: V \to [0,1]$, $x \mapsto \eta(x) \ldef [1-d(x_0, x)/(\sigma n)]_+$ be a cut-off function in space and set $B(n) \equiv B(x_0, n)$.  If for some $k_0 \in (-\infty, M_n)$,
  \begin{align}\label{eq:apriori:estimate}
    \frac{1}{n^2}\,
    \int_{t_0-n^2}^{t_0}
      \Norm{\indicator_{\{u_t \leq k_0\}}}{1, B(n), \eta^2}\;
    \md t
    \;\geq\;
    \frac{1}{2},
  \end{align}
  then for $\tau = 1/4$ there exists $\de \in (0, 1/3)$ and $i \equiv i(\om,\sigma) < \infty$ such that for all $j \geq i$,
  \begin{align*}
    \Norm{\indicator_{\{u_t \leq k_j\}}}{1, B(\sigma n)}
    \;\geq\;
    \de,
    \qquad
    \forall\, t \in [t_0 - \tau n^2, t_0].
  \end{align*}
\end{lemma}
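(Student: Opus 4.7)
The plan is to run the De~Giorgi--style logarithmic argument suggested by the reference to \cite{WYW06}, using the preceding lemma as the key input. First, I would introduce the affine transform
\[
  v_t(x) \;\ldef\; \frac{M_n - u_t(x) + \ve}{M_n - k_0 + \ve}
\]
for a parameter $\ve > 0$ to be chosen in terms of $\om$ and $\si$. Since $u$ is $\cL^\om$-caloric and the map is affine, $v$ also satisfies $\partial_t v - \cL_t^\om v = 0$ on $Q(n)$, with $v_t > 0$ throughout, $v_t \geq 1$ exactly on $\{u_t \leq k_0\}$ (where $g(v_t) = 0$), and globally $v_t \geq \th_\ve \ldef \ve / (M_n - k_0 + \ve)$, so that $g(v_t) \leq L_\ve \ldef -\ln \th_\ve$.

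Applying the energy estimate \eqref{eq:energy:ln} to $v$ with the prescribed tent cut-off $\eta$, and using $\norm{\nabla\eta}{\infty}{E} \leq 1/(\si n)$ together with $\osr(\eta) \leq 2$, I would drop the nonnegative Dirichlet-form term to obtain
\[
  \partial_t F(t) \;\leq\; \frac{c}{(\si n)^2}\, \Norm{1 \vee \mu_t^\om}{1, B(n)},
  \qquad F(t) \ldef \Norm{\eta^2\, g(v_t)}{1, B(n)}.
\]
Integrating from $s$ to $t$ and then averaging $s$ over $[t_0 - n^2, t_0 - \tau n^2]$ with $\tau = 1/4$ yields, for every $t \in [t_0 - \tau n^2, t_0]$,
\[
  F(t) \;\leq\; \frac{4}{3 n^2}\int_{t_0-n^2}^{t_0}\! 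F(s)\, ds \,+\, \frac{c\, \La(\om, n)}{\si^2},
\]
where $\La(\om, n) \ldef \frac{1}{n^2}\int_{t_0-n^2}^{t_0}\Norm{1\vee \mu_s^\om}{1, B(n)}\, ds$ is finite by hypothesis. Using that $g(v_s) \leq L_\ve \cdot \indicator_{\{u_s > k_0\}}$ together with \eqref{eq:apriori:estimate} to bound the time integral, this becomes the pointwise-in-time estimate
\[
  F(t) \;\leq\; \frac{2 L_\ve \|\eta^2\|_1}{3\,|B(n)|} \,+\, \frac{c\,\La(\om, n)}{\si^2}
  \qquad \forall\, t \in [t_0 - \tau n^2, t_0].
\]

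To convert this into a level-set statement, I would use that on $\{u_t > k_j\}$ one has $v_t < 2^{-j} + \th_\ve$. Choosing $\ve = \ve(\om, \si)$ small enough that $\th_\ve$ is negligible in the relevant range of $j$, and $j$ large enough that $g_j \ldef g(2^{-j} + \th_\ve)$ satisfies $g_j / L_\ve$ close to $1$, we get $g(v_t) \geq g_j$ on $\{u_t > k_j\}$. Combining this with the pointwise bound on $F(t)$ and the lower bound $\eta^2 \geq (1-\rh)^2$ on $B(x_0, \rh\si n)$, optimizing $\rh \in (0, 1)$ and invoking volume regularity (Assumption~\ref{ass:graph}(i)) yield an upper bound
\[
  \big|\{x \in B(x_0, \rh\si n) : u_t(x) > k_j\}\big| \;\leq\; (1 - \de')\,\big|B(x_0, \rh\si n)\big|
\]
for some $\de' > 0$ independent of $j$, which translates to $\Norm{\indicator_{\{u_t \leq k_j\}}}{1, B(\si n)} \geq \de \in (0, 1/3)$ for all $j \geq i(\om, \si)$, as required.

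The main obstacle I expect is the last step: the gain $g_j \approx L_\ve$ must beat the ratio $\|\eta^2\|_1 / \big((1-\rh)^2 |B(\rh\si n)|\big)$ with sufficient slack to produce a strictly positive $\de$. The explicit tent form of $\eta$ and the quantitative volume regularity in Assumption~\ref{ass:graph}(i) are precisely what make this optimization feasible, and they determine the value of the threshold $\rh$ and hence of $\de$.
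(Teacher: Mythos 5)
Your overall skeleton is right: apply the logarithmic energy estimate \eqref{eq:energy:ln} to an affine rescaling of $u$, integrate in time, and use monotonicity of $g$ to turn the resulting bound on $\Norm{\eta^2 g(v_t)}{1,B(n)}$ into a level-set estimate. The regularization baked into $v$ via a single parameter $\ve$ (rather than the paper's $\ve_j = 2^{-j}$) and the time-averaging of the starting data (rather than the paper's selection of a single good time $s \in [t_0 - n^2, t_0 - n^2/3]$) are both minor, harmless variations.

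However, your final step fails, and the obstacle you flagged is in fact fatal. You pass to the sub-ball $B(x_0, \rh\si n)$ and use $\eta^2 \geq (1-\rh)^2$ there, which forces the constraint
\begin{align*}
\frac{2 L_\ve}{3 g_j}\cdot\frac{\|\eta^2\|_1}{(1-\rh)^2\,|B(\rh\si n)|} \;<\; 1.
\end{align*}
Since $g_j \leq L_\ve$ always, the first factor is at least $2/3$. For the second, Assumption~\ref{ass:graph}-(i) only gives $\|\eta^2\|_1 \leq |B(\si n)| \leq C_{\mathrm{reg}}(\si n)^d$ and $|B(\rh\si n)| \geq c_{\mathrm{reg}}(\rh\si n)^d$, so the second factor is at least $(C_{\mathrm{reg}}/c_{\mathrm{reg}})\,\big((1-\rh)^2\rh^d\big)^{-1}$. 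The maximum of $(1-\rh)^2 \rh^d$ over $\rh\in(0,1)$ is $4 d^d / (d+2)^{d+2}$, which is $1/16$ at $d=2$ and even smaller for larger $d$. Hence the product is at least $\tfrac{2}{3}\cdot 16 = 32/3 > 1$, with no room for a positive $\de'$. No choice of $\rh$, $\ve$, or $j$ closes this.

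The fix is not to restrict to a sub-ball at all. Keep the estimate in the $\eta^2$-weighted norm all the way through: from your bound on $F(t)$ you get
\begin{align*}
\Norm{\indicator_{\{u_t > k_j\}}}{1,B(n),\eta^2} \;\leq\; \frac{2 L_\ve}{3\, g_j} \,+\, \frac{c\,\La(\om,n)\,|B(n)|}{\si^2\, g_j\, \langle\eta^2,1\rangle_{\ell^2(V)}},
\end{align*}
and choosing $\ve$ small and then $j$ large makes the right-hand side at most, say, $5/6$. Then convert to counting measure at the very end in the other direction, using $\eta^2\leq 1$ (so $|\{x\in B(\si n): u_t\leq k_j\}| \geq \sum_x \eta^2(x)\indicator_{\{u_t\leq k_j\}}$) and $\langle\eta^2,1\rangle_{\ell^2(V)} \geq |B(\si n/2)|/4$; this avoids any $\rh$-optimization and gives $\de \geq c_{\mathrm{reg}}/(6\cdot 2^{d+2} C_{\mathrm{reg}})$. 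This is precisely what the paper does: it establishes $\Norm{\indicator_{\{v_t < h_j\}}}{1,B(n),\eta^2}\leq 5/6$ and translates only afterwards. Your approach produces the right weighted estimate; the error is solely in the last conversion, where you should lower-bound the measure of the \emph{good} set using the upper bound $\eta^2\leq 1$, not upper-bound the measure of the \emph{bad} set on a shrunk ball using the lower bound $\eta^2\geq(1-\rh)^2$.
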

\begin{proof}
  In order to simplify the presentation, set
  \begin{align*}
    v_t(x)
    \;\ldef\;
    \frac{M_n - u_t(x)}{M_n - k_0}
    \qquad \text{and} \qquad
    h_j
    \;=\;
    \ve_j
    \;\ldef\;
    2^{-j}.
  \end{align*}
  Then, $\partial_t (v + \ve_j) - \cL_t^{\om} (v + \ve_j) = 0$ on $Q(n)$ for all $j \in \bbN_0$ and $u_t(x) > k_j$ if and only if $v_t(x) < h_j$ for any $x \in V$.  Set $\bar{\tau} \ldef 1/3$.  In view of \eqref{eq:apriori:estimate} there exists $s \in [t_0- n^2, t_0 - \bar{\tau} n^2]$ such that
  \begin{align}\label{eq:apriori:pointwise}
    \Norm{\indicator_{\{v_s < 1\}}}{1, B(n), \eta^2}
    \;\leq\;
    \frac{3}{4}.
  \end{align}
  Indeed, if we assume that the contrary is true, that is $\Norm{\indicator_{\{v_s < 1\}}}{1, B(n), \eta^2} > 3/4$ for all $s \in [t_0 - n^2, t_0 - \bar{\tau} n^2]$, then we find that
  \begin{align*}
    \frac{1}{2}
    &\overset{\!\!\!\eqref{eq:apriori:estimate}\!\!\!}{\;\geq\;}
    \frac{1}{n^2}\,
    \int_{t_0-n^2}^{t_0}
      \Norm{\indicator_{\{v_t < 1\}}}{1, B(n), \eta^2}\;
    \md t
    \;>\;
    \frac{1}{n^2}\,
    \int_{t_0-n^2}^{t_0- \bar{\tau} n^2}
      \frac{3}{4}\,
    \md t
    \;=\;
    \frac{1}{2},
  \end{align*}
  which leeds to a contradiction.  By integrating the energy estimate \eqref{eq:energy:ln} over the interval $[s,t]$ with $t \in [t_0 -\tau n^2, t_0]$ and using that $\norm{\nabla \eta}{\infty}{E} \leq 1/(\sigma n)$ and $\osr(\eta) \leq 2$, we obtain
  \begin{align*}
    \Norm{g(v_t + \ve_j)}{1, B(n), \eta^2}
    \;\leq\;
    \Norm{g(v_s + \ve_j)}{1, B(n), \eta^2}
    \,+\,
    c\, \Norm{1 \vee \mu_t^{\om}}{1, 1, Q(n)},
  \end{align*}
  where $c \equiv c(\sigma) \in (0, \infty)$ is a constant independent of $n$.  Since, by construction, $g$ is non-increasing and vanishes on $[1, \infty)$ we find that
  \begin{align*}
    \Norm{g(v_s + \ve_j)}{1, B(n), \eta^2}
    &\;\leq\;
    g(\ve_j)\, \Norm{\indicator_{\{v_s < 1\}}}{1, B(n), \eta^2}
    \overset{\eqref{eq:apriori:pointwise}}{\;\leq\;}
    g(\ve_j) \cdot \frac{3}{4}
  \end{align*}
  and
  \begin{align*}
    \Norm{g(v_t + \ve_j)}{1, B(n), \eta^2}
    \;\geq\;
    g(h_j + \ve_j)\, \Norm{\indicator_{\{v_t < h_j\}}}{1, B(n), \eta^2}.
  \end{align*}
  This yields, for any $j \geq 2$,
  \begin{align*}
    \Norm{\indicator_{\{v_t < h_j\}}}{1, B(n), \eta^2}
    &\;\leq\;
    \frac{g(\ve_j)}{g(h_j + \ve_j)} \cdot \frac{3}{4}
    \,+\,
    \frac{c}{g(h_j + \ve_j)}\, \Norm{1 \vee \mu_t^{\om}}{1, 1, Q(n)}
    \\[1ex]
    &\;\leq\;
    \bigg(1 + \frac{1}{j-1}\bigg)\, \frac{3}{4}
    \,+\,
    \frac{c}{j-1}\, \Norm{1 \vee \mu_t^{\om}}{1, 1, Q(n)}.
  \end{align*}
  Hence, there exists $i(\om,\sigma) \in \bbN$ such that $\Norm{\indicator_{\{v_t < h_j\}}}{1, B(n), \eta^2} \leq 5/6$ for all $j \geq i(\om,\sigma)$ and $t \in [t_0 -\tau n^2, t_0]$.
  Since $\scpr{\eta^2}{1}{V} \geq |B(\sigma n/2)|/4$, we obtain
  \begin{align*}
    \Norm{\indicator_{\{u_t \leq k_j\}}}{1, B(\sigma n)}
    \;\geq\;
    \frac{|B(\sigma n/2)|}{4 |B(\sigma n)|}\,
    \Norm{\indicator_{\{v_t \geq h_j\}}}{1, B(n), \eta^2}
    \;\geq\;
    \frac{c_{\mathrm{reg}}}{6 \cdot 2^{d+2}\,C_{\mathrm{reg}}},
  \end{align*}
  and the assertion follows.
\end{proof}
Our next step is to show that the size of the sets where a space-time harmonic function $u$ exceeds some level $k$ can be made arbitrary small compared to the size of the time-space cylinder $Q(n)$ provided that $k$ is sufficiently close to the maximum of $u$ in $Q(n)$.
\begin{lemma}\label{lemma:levelset:small}
  Let Assumption~\ref{ass:graph}-(i) and (iii) be satisfied, and set $\tau \ldef 1/4$ and $\si \ldef 1/ (2 C_{\mathrm{W}})$.  For $t_0 \in \bbR$, $x_0 \in V$ and $\si n \geq N_1(x_0) \vee N_3(x_0)$, suppose that $u > 0$ solves $\partial u - \cL_t^{\om} u = 0$ on $Q(n)$.  Assume that there exists $\de > 0$ and $i \in \bbN$ such that
  \begin{align}\label{eq:ass:apriori}
    \Norm{\indicator_{\{u_t \leq k_i\}}}{1, B(x_0, \si n)}
    \;\geq\;
    \de
    \qquad \forall\, t \in [t_0 -\tau n^2, t_0].
  \end{align}
  Then, for any $\ve \in (0, 1)$ there exists $\bbN \ni j(\ve, \de, \om) \geq i$ such that
  \begin{align}
    \Norm{\indicator_{\{u > k_{j}\}}}{1, 1, Q_{\tau, \si}(n)}
    \;\leq\;
    \ve
    \qquad \forall\, j \geq j(\ve, \de, \om).
  \end{align}
\end{lemma}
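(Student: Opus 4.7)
The strategy is to adapt De~Giorgi's classical intermediate-level-set argument (cf.~\cite[Ch.~II, §5]{LSU68}) to the discrete, weighted, time-dependent setting. Set $I_\tau := [t_0 - \tau n^2, t_0]$, $B_\sigma := B(x_0, \sigma n)$, and introduce the normalized space-time averaged super-level mass
\[
  a_j \;:=\; \frac{1}{|I_\tau||B_\sigma|}\int_{I_\tau}|A_j(t)|\,\md t,\qquad A_j(t) \;:=\; \{x \in B_\sigma : u_t(x) > k_j\}.
\]
The goal is to show $a_j \to 0$ as $j \to \infty$, which implies the conclusion.

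For each $j \geq i$ introduce the truncation $w_j := [(u - k_j)_+] \wedge (k_{j+1} - k_j)$ and the transition-edge set $D_j(t) := \{e \in E : \nabla w_j(t, e) \neq 0\}$. Hypothesis~\eqref{eq:ass:apriori} together with $k_i \leq k_j$ yields $|B_\sigma \setminus A_i(t)| \geq \delta|B_\sigma|$ and $w_j(t, \cdot) \equiv 0$ there. Since $\sigma = 1/(2 C_{\mathrm{W}})$ enforces $C_{\mathrm{W}} \sigma n \leq n/2 < n$, we may apply the weak $\ell^1$-Poincar\'e inequality \eqref{eq:poincare:ineq} to $w_j(t,\cdot)$ on $B_\sigma$ with $\mathcal{N}(t) := B_\sigma \setminus A_i(t)$. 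Noting that $(w_j(t,\cdot))_{\mathcal{N}(t)} = 0$ and $w_j \geq (k_{j+1} - k_j)\indicator_{A_{j+1}(t)}$, and applying Cauchy--Schwarz with weight $\omega_t$ to the edge sum, we obtain
\[
  (k_{j+1} - k_j)\,|A_{j+1}(t)| \;\leq\; C\,(1 + \delta^{-1})\,n\,\Bigl(\sum_{e \in D_j(t)}\omega_t(e)^{-1}\Bigr)^{\!1/2}\,\bigl(2\mathcal{E}_t^\omega(w_j)\bigr)^{1/2}.
\]

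Since $|\nabla w_j| \leq |\nabla(u - k_j)_+|$ pointwise and $(u - k_j)_+ \leq M_n - k_j = 2(k_{j+1} - k_j)$, the energy estimate of Lemma~\ref{lemma:energy:est} applied on an intermediate cylinder containing $I_\tau \times B_\sigma$ with cut-offs $\zeta, \eta$ identically one there gives
\[
  \int_{I_\tau}\mathcal{E}_t^\omega(w_j)\,\md t \;\leq\; C(\omega, n, \delta)\,(k_{j+1} - k_j)^2
\]
uniformly in $j$, using that $\mu^\omega$ is integrable on the compact $Q(n)$ for any fixed $\omega$. Squaring the Poincar\'e bound, integrating in $t$, and applying Cauchy--Schwarz in $t$, we arrive at the key iteration
\[
  a_{j+1}^2 \;\leq\; C(\omega, \delta, n)\,\Phi_j(\omega), \qquad \Phi_j(\omega) \;:=\; \int_{I_\tau}\!\sum_{e \in D_j(t)}\!\omega_t(e)^{-1}\,\md t.
\]

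It remains to prove $\Phi_j(\omega) \to 0$. Observing that $D_j(t) \subseteq \partial_{\!E} A_j(t) \cup \partial_{\!E} A_{j+1}(t)$, where $\partial_{\!E} A := \{e : |e \cap A| = 1\}$ is the edge boundary, and that a fixed edge $e$ belongs to $\partial_{\!E} A_j(t)$ for at most $O(1 + \log_2((M_n - k_0)/(M_n - \max u_t(e^\pm))))$ indices $j$, a bounded-overlap bound $\sum_j \indicator_{D_j(t)}(e) \leq C(\omega)$ follows provided $\max u_t$ on the one-edge enlargement of $B_\sigma$ is uniformly bounded away from $M_n$ for $t \in I_\tau$. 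This in turn is a discrete parabolic strong maximum principle, whose hypotheses are satisfied because \eqref{eq:ass:apriori} precludes $u \equiv M_n$. Consequently $\sum_j \Phi_j(\omega) < \infty$, hence $\Phi_j \to 0$ and $a_j \to 0$, giving the lemma. The principal technical obstacle lies in this final bounded-overlap step, which requires quantitative interior-vs-boundary control on $u$ in $Q(n)$ and is delicate in the degenerate time-dependent framework; it can be circumvented by iterating the inequality $a_{j+1}^2 \leq C(a_j - a_{j+1})$ derived via a more careful vertex-accounting of $D_j(t)$.
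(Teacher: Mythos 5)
Your plan follows the classical De~Giorgi / Lady\v{z}enskaja--Solonnikov--Ural'ceva intermediate-level-set route: define the super-level masses $a_j$, truncate at successive levels, apply the weak Poincar\'e inequality plus the energy estimate, and then show the boundary term $\Phi_j(\omega)=\int_{I_\tau}\sum_{e\in D_j(t)}\omega_t(e)^{-1}\,\md t$ tends to zero. The paper does \emph{not} take this route, and indeed its introduction singles out precisely this intermediate step as the one that breaks down: the classical control of $|A_{j+1}|$ by $|A_j|-|A_{j+1}|$ (the step you relegate to ``a more careful vertex-accounting of $D_j(t)$'', or alternatively to a bounded-overlap bound) rests on a co-area / isoperimetric argument for level sets of the gradient, and this is not available for the discrete weighted gradient $\omega_t(e)\nabla u(e)^2$ with degenerate weights. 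Your workaround via a discrete parabolic strong maximum principle does not close the gap: $M_n$ is the supremum over $Q(n)$, $t_0$ is the \emph{terminal} time of that cylinder, and $Q_{\tau,\sigma}(n)$ shares that terminal time, so the supremum can perfectly well be attained at $t=t_0$ inside $B_\sigma$ without contradicting any maximum principle. Thus $\max u_t$ on the one-edge enlargement of $B_\sigma$ need not be uniformly bounded away from $M_n$, the per-edge overlap count can be unbounded, and $\sum_j\Phi_j(\omega)<\infty$ does not follow. Nothing in the other hypotheses (in particular \eqref{eq:ass:apriori}, which only gives a lower bound on sub-level mass) repairs this. You correctly identify this as ``the principal technical obstacle'', but the proposal leaves it genuinely open, so the proof is incomplete.

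The paper circumvents the obstruction by the logarithm device borrowed from Wu--Yin--Wang: rather than truncating $(u-k_j)_+$, it works with $g(w_t+\ve_j)$ where $w_t=(M_n-u_t)/(M_n-k_i)$, $\ve_j=2^{-j}$, and $g$ is a convex $C^1$ regularisation of $(-\ln)_+$ satisfying $g'(r)^2\lesssim g''(r)$. This has two decisive structural advantages your scheme lacks. First, the specialised energy estimate \eqref{eq:energy:ln}, which exploits $g'(r)^2\lesssim g''(r)$ and $-rg'(r)\leq c$, produces a $j$-\emph{uniform} bound on the weighted Dirichlet form $\cE^{\omega,\eta^2}_t(g(w_t+\ve_j))$, with only an additive error $\lesssim\|1\vee\mu^\omega\|_{1,1,Q(n)}$. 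Second, the set $\cN_t=\{x:g(w_t+\ve_j)=0\}$ contains $\{u_t\leq k_i\}$, so hypothesis \eqref{eq:ass:apriori} feeds directly into the Poincar\'e inequality without any truncation bookkeeping. The upshot is a bound on $\|g(w+\ve_j)\|_{1,1,Q_{\tau,\sigma}}$ that is uniform in $j$; since on $\{w<h_j\}$ one has $g(w+\ve_j)\geq g(h_j+\ve_j)\sim (j-1)\ln 2$, Chebyshev immediately gives $\|\indicator_{\{w<h_j\}}\|_{1,1,Q_{\tau,\sigma}}\lesssim 1/(j-1)\to0$, with no bounded-overlap argument, no strong maximum principle, and no isoperimetric control of the level sets needed.
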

\begin{proof}
  Write $I_{\tau} \ldef [t_0 - \tau n^2, t_0]$, $B_{\si} \ldef B(x_0, \si n)$ and $Q_{\tau, \si} \ldef I_{\tau} \times B_{\si}$.  Let $\eta\!: V \to [0, 1]$ be a cut-off function with the properties that $\supp \eta \subset B(n)$, $\eta \equiv 1$ on $B_{1/2}$, $\eta \equiv 0$ on $\partial B_1$ and linear decaying on $B_1 \setminus B_{1/2}$.  Thus, $\norm{\nabla \eta}{\infty}{E} \leq 2/ n$ and $\osr(\eta) \leq 2$.  Further, define
  \begin{align*}
    w_t(x)
    \;\ldef\;
    \frac{M_n - u_t(x)}{M_n - k_i}
    \qquad \text{and} \qquad
    h_j
    \;=\;
    \ve_j
    \;\ldef\;
    2^{-j}.
  \end{align*}
  Then, $w \geq 0$ and $\partial_t (w + \ve_j) - \cL_t^{\om} (w + \ve_j) = 0$ on $Q(n)$ for any $j \in \bbN$.  Define $\cN_t \ldef \{x \in B_{\si} : g(w_t(x) + \ve_j) = 0\}$ for any $t \in [t_0 - \tau n^2, t_0]$.  Recall that $g(r) = 0$ for every $r \in (1, \infty)$.  Hence,
  \begin{align*}
    \frac{|\cN_t|}{|B_{\si}|}
    \;=\;
    \Norm{\indicator_{\{g(w_t + \ve_j) = 0\}}}{1, B_{\si}}
    \;\geq\;
    \Norm{\indicator_{\{w_t \geq 1\}}}{1, B_{\si}}
    \;=\;
    \Norm{\indicator_{\{u_t \leq k_i\}}}{1, B_{\si}}
    \overset{\!\!\!\eqref{eq:ass:apriori}\!\!\!}{\;\geq\;}
    \de
    \;>\;
    0,
  \end{align*}
  and we deduce from Assumption~\ref{ass:graph}-(iii) that
  \begin{align*}
    \Norm{g(w_t + \ve_j)}{1, B_{\si}}
    \overset{\eqref{eq:poincare:ineq}}{\;\leq\;}
    C_{\mathrm{P}}\,n\, \Big(1 + \frac{1}{\de}\Big)\, \frac{|B_1|}{|B_{\si}|}\,
    \Norm{\nu_t^{\om}}{1, B_1}^{1/2}\,
    \bigg(
      \frac{\cE_t^{\om, \eta^2}\big(g(w_t + \ve_j)\big)}{|B_1|}
    \bigg)^{\!\!1/2}
  \end{align*}
  for any $t \in [t_0 - \tau n^2, t_0]$.  Hence,
  \begin{align}\label{eq:estimate:g:1norm}
    \Norm{g(w + \ve_j)}{1, 1, Q_{\tau, \si}}^2
    \;\leq\;
    \frac{c}{\de}\, \Norm{1 \vee \nu^{\om}}{1,1,Q(n)}\,
    \int_{I_{\tau}}
      \frac{\cE_t^{\om, \eta^2}(g(w_t + \ve_j))}{|B_1|}\,
    \md t,
  \end{align}
  where $c \in (0, \infty)$ is a constant independent of $n$ which may change from line to line.  An upper bound on the time-averaged Dirichlet form follows from the energy estimate.  Indeed, by integrating \eqref{eq:energy:ln} over the interval $[t_0 - \tau n^2, t_0]$ we obtain that
  \begin{align}\label{eq:estimate:g:df}
    \int_{I_{\tau}}
      \frac{\cE_t^{\om, \eta^2}(g(w_t + \ve_j))}{|B_1|}\,
    \md t
    \;\leq\;
    \frac{c}{\de}\,
    \Big(
      \Norm{g(w_{t_0 - \tau n^2} + \ve_j)}{1, B_1}
      \,+\,
      \Norm{1 \vee \mu^{\om}}{1, 1, Q(n)}
    \Big).
  \end{align}
  Thus, by combining \eqref{eq:estimate:g:1norm} and \eqref{eq:estimate:g:df} and using that $g$ is non-increasing we obtain that for any $j \geq 2$,
  \begin{align*}
    &\Norm{\indicator_{\{w < h_j\}}}{1, Q_{\tau, \si}}^2
    \\[0.5ex]
    &\mspace{36mu}\leq\;
    \frac{c}{\de}\, \Norm{1 \vee \nu^{\om}}{1,1,Q(n)}\,
    \bigg(
      \frac{g(\ve_j)}{g(h_j + \ve_j)^2}
      \,+\,
      \frac{1}{g(h_j + \ve_j)^2}\,
      \Norm{1 \vee \mu^{\om}}{1, 1, Q(n)}
    \bigg)
    \\[1ex]
    &\mspace{36mu}\leq\;
    \frac{c}{\de}\, \Norm{1 \vee \nu^{\om}}{1,1,Q(n)}\,
    \bigg(
      \frac{j}{(j-1)^2} \,+\, \frac{1}{(j+1)^2}\,
      \Norm{1 \vee \mu^{\om}}{1, 1, Q(n)}
    \bigg).
  \end{align*}
  Hence, for any $\ve > 0$ there exists $\bbN \ni j(\ve, \de, \om) \in [i, \infty)$ such that for all $j \geq j(\ve, \de, \om)$ it holds that $\Norm{\indicator_{\{u > k_{j}\}}}{1, 1, Q_{\tau, \si}} = \Norm{\indicator_{\{w < h_{j-i}\}}}{1, 1, Q_{\tau, \si}} \leq \ve$, which completes the proof.
\end{proof}
\begin{proof}[Proof of Theorem~\ref{thm:osc}]
  Obviously, if $M_n = m_n$, where
  \begin{align*}
    M_n
    \;\ldef\;
    \sup\nolimits_{(t,x) \in Q(n)} u(t,x)
    \qquad \text{and} \qquad
    m_n
    \;\ldef\;
    \inf\nolimits_{(t, x) \in Q(n)} u(t,x),
  \end{align*}
  then \eqref{eq:oscillation_ineq} holds true for any $\ga \geq 0$.  Therefore, we assume in what follows that $M_n > m_n$.  Set $k_0 \ldef ( M_n + m_n )/2 \in (-\infty, M_n)$, and let $k_j$ for any $j \in \bbN$ be defined as in \eqref{eq:def:k_j}.  Moreover, consider the cut-off function  $V \ni x \mapsto \eta(x) \ldef [1 - d(x_0, x)/n]_+$.  We may assume that
  \begin{align*}
    \frac{1}{n^2}\,
    \int_{t_0-n^2}^{t_0}
      \Norm{\indicator_{\{u_t \leq k_0}\}}{1, B(n), \eta^2}\;
    \md t
    \;\geq\;
    \frac{1}{2}.
  \end{align*}
  Otherwise, we consider $(M_n + m_n) - u$ instead of $u$.  Let $\tau = 1/4$, $\si = 1/(2C_{\mathrm{W}})$ and $\ve = 1 / \big(2^{2 \ka + 1} C_2 C_{\mathrm{W}}^2 (\Norm{1 \vee \mu^{\om}}{p, p', Q(\si n)} \Norm{1 \vee \nu^{\om}}{q, q', Q(\si n)})^{\ka}\big)^{p_* \!\vee p_*'}$.  Then, by applying Lemmas~\ref{lemma:levelset:apriori} and \ref{lemma:levelset:small} we find $j \equiv j(\om, \ve) < \infty$ such that
  \begin{align*}
    \Norm{\indicator_{\{u > k_{j}\}}}{1, 1, Q_{\tau, \si}(n)}
    \;\leq\;
    \ve.
  \end{align*}
  Next, set $\vt \ldef \si / 2$ and apply Theorem~\ref{thm:max:ineq} to obtain that
  \begin{align*}
    M_{\vt n}
    &\;\leq\;
    \sup_{(t,x) \in Q_{1/2}(\si n)}\mspace{-6mu} u(t,x)
    \\[.5ex]
    &\;\leq\;
    k_j
    \,+\,
    C_2\, 2^{2\ka}
    \Big(
      \Norm{1 \vee \mu^{\om}}{p, p'\!, Q(\si n)}\,
      \Norm{1 \vee \nu^{\om}}{q, q'\!,Q(\si n)}
    \Big)^{\!\ka}\,
    \Norm{(u - k_j)_+}{2p_*, 2p_*', Q(\si n)}.
  \end{align*}
  Since
  \begin{align*}
    \Norm{(u - k_j)_+}{2p_*, 2p_*', Q(\si n)}
    &\;\leq\;
    C_{\mathrm{W}}^2\,
    \big(M_n - k_j\big)\,
    \Norm{\indicator_{\{u > k_{j}\}}}
      {1, 1, Q_{\tau, \si}(n)}^{1/p_* \wedge 1/p_*'},
  \end{align*}
  we find that
  \begin{align*}
    M_{\vt n}
    \;\leq\;
    k_j
    \,+\,
    \frac{1}{2}\, \big(M_n - k_j\big)
    \;=\;
    M_n \,-\, \frac{1}{2^{j+2}}\, \big(M_n - m_n\big).
  \end{align*}
  Therefore, we have
  \begin{align*}
    M_{\vt n} - m_{\vt n}
    \;\leq\;
    M_n \,-\, \frac{1}{2^{j+2}}\, \big(M_n - m_n\big) - m_{\vt n}
    \;\leq\;
    \Big(1 - \frac{1}{2^{j+2}}\Big)\, \big(M_n - m_n\big),
  \end{align*}
  which proves the theorem.
\end{proof}

\subsection{Weak Parabolic Harnack inequality}
As mentioned before, one appealing aspect of the above proof of Theorem~\ref{thm:osc} is its avoidance of a parabolic Harnack inequality (PHI). Nevertheless, from the maximal inequality in Theorem~\ref{thm:max:ineq} and the auxiliary estimates on the level sets of harmonic functions  in Lemmas~\ref{lemma:levelset:apriori} and \ref{lemma:levelset:small} we can deduce the following weaker version of the PHI.  In the continuous setting it also possible to derive a full PHI from the conjunction of a weak PHI and a H\"older continuity estimate, see e.g.\ \cite[Section~5.2.3]{WYW06}.  However, those arguments cannot be transferred into our discrete setting. 
\begin{theorem}\label{thm:weak:harnack}
  Suppose that Assumption~\ref{ass:graph} holds.  For $t_0 \in \bbR$, $x_0 \in V$ and $\si =1/(2C_{\mathrm{W}})$ fixed let $n\in \bbN$ be such that $\si n \geq \max\{N_1(x_0), N_2(x_0), N_3(x_0)\}$. Suppose that $u > 0$ is a solution of $\partial_t u - \cL_t^{\om} u = 0$ on $Q(n)$.  Further, let $\eta\!: V \to [0,1]$, $x \mapsto \eta(x) \ldef [1-d(x_0, x)/(\sigma n)]_+$ be a cut-off function in space and set $B(n) \equiv B(x_0, n)$.  If
  \begin{align}\label{eq:weak:Harnack:cond}
    \frac{1}{n^2}\,
    \int_{t_0-n^2}^{t_0}
      \Norm{\indicator_{\{u_t \geq h\}}}{1, B(n), \eta^2}\;
    \md t
    \;\geq\;
    \frac{1}{2},
  \end{align}
  for some $h>0$, then there exists $j \in \bbN$ such that
  \begin{align}\label{eq:weak:Harnack}
    \inf_{Q_{1/2}(\si n)} u(t,x) \;\geq\; \frac{h}{2^{j+1}}.
  \end{align}
\end{theorem}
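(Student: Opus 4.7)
The argument runs parallel to that of Theorem~\ref{thm:osc}: invoke Lemmas~\ref{lemma:levelset:apriori} and \ref{lemma:levelset:small} in succession, then the maximal inequality of Theorem~\ref{thm:max:ineq}. The key twist is to work with the auxiliary function $\tilde u := M_n - u$, where $M_n := \sup_{Q(n)} u$. Since $\cL_t^{\om}$ annihilates constants, $\tilde u$ is again a solution; it is non-negative, and strict positivity can be enforced by replacing it with $\tilde u + \de$ and letting $\de \downarrow 0$ in the final estimate. Crucially, $\{\tilde u \leq M_n - h\} = \{u \geq h\}$, so the super-level-set hypothesis \eqref{eq:weak:Harnack:cond} on $u$ is exactly the sub-level-set hypothesis \eqref{eq:apriori:estimate} for $\tilde u$ with $k_0 = M_n - h$. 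Let $m_n := \inf_{Q(n)} u$; if $m_n \geq h/2$ the conclusion is immediate with $j=0$, so assume $m_n < h/2$.

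Lemma~\ref{lemma:levelset:apriori}, applied to $\tilde u$, yields some $i = i(\om, \sigma)$ such that for every $j \geq i$ and every $t \in [t_0 - n^2/4, t_0]$,
\begin{align*}
  \Norm{\indicator_{\{u_t \,\geq\, h_j\}}}{1, B(x_0, \sigma n)}
  \;\geq\; \de,
\end{align*}
where $h_j := m_n + 2^{-j}(h - m_n)$ and $\sigma = 1/(2 C_{\mathrm{W}})$; this is just the translate of the conclusion $\Norm{\indicator_{\{\tilde u_t \leq k_j\}}}{1, B(x_0, \sigma n)} \geq \de$ via $\{\tilde u_t \leq k_j\} = \{u_t \geq M_n - k_j\}$. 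Feeding this into Lemma~\ref{lemma:levelset:small} (whose hypothesis \eqref{eq:ass:apriori} matches the display above after translation) produces, for any prescribed $\ve > 0$, some $j = j(\om, \ve) \geq i$ with
\begin{align*}
  \Norm{\indicator_{\{u < h_j\}}}{1, 1, Q_{\tau, \sigma}(n)}
  \;\leq\; \ve,
  \qquad \tau = \tfrac{1}{4}.
\end{align*}

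To convert this $L^{1,1}$ smallness into a pointwise lower bound on $u$, observe that $v := 2 h_j - u$ is itself a (sub)solution of $\partial_t - \cL_t^{\om}$ and that $(v - h_j)_+ = (h_j - u)_+ \leq h_j\, \indicator_{\{u < h_j\}}$. Apply Theorem~\ref{thm:max:ineq} to $v$ on the cylinder $Q(\sigma n)$ with threshold $h_j$ and $\sigma' = 1/2$. Since $\sigma^2 \leq \tau$ we have $Q(\sigma n) \subset Q_{\tau, \sigma}(n)$, and so H\"older's inequality gives $\Norm{(v-h_j)_+}{2 p_*, 2 p_*'\!, Q(\sigma n)} \leq c\, h_j\, \ve^{\gamma}$ for some exponent $\gamma = \gamma(p, p', q, q') > 0$. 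Choosing $\ve$ small enough so that the multiplicative factor in \eqref{eq:max:ineq} is at most $1/2$ yields $\max_{Q_{1/2}(\sigma n)} v \leq 3 h_j/2$, i.e.\ $\min_{Q_{1/2}(\sigma n)} u \geq h_j/2 \geq 2^{-(j+1)}(h - m_n) \geq h/2^{j+2}$ in the non-trivial case $m_n < h/2$; relabelling $j \mapsto j+1$ gives the stated bound. The only genuinely delicate point is the bookkeeping of cylinders and of the moment norms $\Norm{1 \vee \mu^{\om}}{p, p'\!, \cdot}$, $\Norm{1 \vee \nu^{\om}}{q, q'\!, \cdot}$ along the chain $Q(\sigma n) \subset Q_{\tau, \sigma}(n) \subset Q(n)$; no new analytic tool beyond those developed earlier in this section is needed.
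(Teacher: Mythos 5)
Your proof is correct and follows the same route as the paper: Lemmas~\ref{lemma:levelset:apriori} and~\ref{lemma:levelset:small} followed by Theorem~\ref{thm:max:ineq}, applied to a reflected solution. The only cosmetic difference is that the paper reflects to $v := (M_n + m_n) - u$, which is automatically strictly positive (since $m_n = \inf_{Q(n)} u > 0$) so no $\de$-regularization is needed, and then feeds this same $v$ with threshold $k_j$ into the maximal inequality, whereas you reflect to $\tilde u = M_n - u$, patch positivity with a $\de \downarrow 0$ limit, and re-center to $2h_j - u$ for the final step -- this also avoids your case split on $m_n \gtrless h/2$, since $\inf u \geq m_n + \tfrac12(M_n - k_j) \geq h\,2^{-(j+1)}$ follows directly.
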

\begin{proof}
  Define $v \ldef (M_n + m_n) - u$, where
  \begin{align*}
    M_n
    \;\ldef\;
    \sup\nolimits_{(t,x) \in Q(n)} u(t,x)
    \qquad \text{and} \qquad
    m_n
    \;\ldef\;
    \inf\nolimits_{(t, x) \in Q(n)} u(t,x).
  \end{align*}
  Note that, by definition, $v > 0$ and $\partial_t v - \cL_t^{\om} v = 0$ on $Q(n)$.  In particular, we have that $\sup_{(t,x) \in Q(n)} v(t,x) = M_n$ and $\inf_{(t,x) \in Q(n)} v(t,x) = m_n$.  Since, for any $h \in (0, m_n]$, the assertion \eqref{eq:weak:Harnack} is trivial, we assume in the sequel that $h > m_n$.  Set $k_0 \ldef M_n + m_n - h \in (-\infty, M_n)$.  Then, \eqref{eq:weak:Harnack:cond} is equivalent to
  \begin{align}
    \frac{1}{n^2}\,
    \int_{t_0-n^2}^{t_0}
      \Norm{\indicator_{\{v_t \leq k_0\}}}{1, B(n), \eta^2}\;
    \md t
    \;\geq\;
    \frac{1}{2}.
  \end{align}
  Let $\tau = 1/4$ and $\ve = 1 / \big(2^{2 \ka + 1} C_2 C_{\mathrm{W}}^2 (\Norm{1 \vee \mu^{\om}}{p, p', Q(\si n)} \Norm{1 \vee \nu^{\om}}{q, q', Q(\si n)})^{\ka}\big)^{p_* \!\vee p_*'}$.  Then, by applying Lemmas~\ref{lemma:levelset:apriori} and \ref{lemma:levelset:small} we find $j \equiv j(\om, \ve) < \infty$ such that
  \begin{align*}
    \Norm{\indicator_{\{v > k_{j}\}}}{1, 1, Q_{\tau, \si}(n)}
    \;\leq\;
    \ve,
  \end{align*}
  for $k_j$ as defined in \eqref{eq:def:k_j}.  Thus, by Theorem~\ref{thm:max:ineq} we obtain
  \begin{align*}
    &\sup_{(t,x) \in Q_{1/2}(\si n)}\mspace{-6mu} v(t,x)
    \\
    &\mspace{36mu}\leq\;
    k_j
    \,+\,
    C_2\, 2^{2\ka}
    \Big(
      \Norm{1 \vee \mu^{\om}}{p, p'\!, Q(\si n)}\,
      \Norm{1 \vee \nu^{\om}}{q, q'\!,Q(\si n)}
    \Big)^{\!\ka}\,
    \Norm{(v - k_j)_+}{2p_*, 2p_*', Q(\si n)}
    \\[.5ex]
    &\mspace{36mu}\leq\;
    k_j + \frac{1}{2}\, (M_n - k_j),  
  \end{align*}
  where we used that
  \begin{align*}
    \Norm{(v - k_j)_+}{2p_*, 2p_*', Q(\si n)}
    \;\leq\;
    C_{\mathrm{W}}^2\,
    \big(M_n - k_j\big)\,
    \Norm{\indicator_{\{v > k_{j}\}}}
      {1, 1, Q_{\tau, \si}(n)}^{1/p_* \wedge 1/p_*'}.
  \end{align*}
  By using the definition of $v$, we arrive that
  \begin{align*}
    \inf_{(t,x) \in Q_{1/2}(\si n)}\mspace{-6mu} u(t,x)
    \;\geq\;
    \frac{1}{2} (M_n - k_j) + m_n
    \;\geq\;
    \frac{h}{2^{j+1}},
  \end{align*}
  which is the claim.
\end{proof}

\section{A general criterion for a local CLT}\label{sec:localclt}
In this section we prove a local central limit theorem for suitably regular subgraphs $\cG\subset \bbZ^d$, provided that H\"older continuity on large space-time scales and a CLT hold. The proof will  mostly follow the arguments in the proof of a similar result in~\cite[Section 4]{BH09}, from which we borrow some of the notation. For further related results we refer to \cite{CH08}. However, the arguments in the present paper require a modification of the criteria in \cite{BH09,CH08}, which is why we state it here and also include a proof for the reader's convenience.

Let $\cG \subset \bbZ^d$  be an infinite and connected graph and let $d\!: \cG\times \cG \to [0,\infty)$ denote the graph distance on $\cG$. We assume that $0\in \cG$. For $x \in \bbR^d$ and $r > 0$ we set
\begin{align*}
  C(x,r) \;\ldef\; x + [- r,r]^d,
  \qquad
  \La(x,r) \;\ldef\;  C(x,r) \cap \cG,
  \qquad 
  \La_n(x,r) \;\ldef\; \La(nx, nr).
\end{align*}
Let $g_n\!: \bbR^d \to \cG$ be a function so that $g_n(x)$ is a closest point in $\cG$ to $nx$, in the $|\cdot|_\infty$ norm (we can put a fixed ordering on $\bbZ^d$ to resolve ties).  We denote by $Q$ the law of a time-continuous random walk $(X_t : t\geq 0)$ on $\cG$ started at $0 \in \cG$ at time $t = 0$.  We set 
\begin{align*}
  q(t,x) \;=\; Q[X_t = x],
  \qquad t \geq 0,\, x \in \cG.
\end{align*}
We assume the following additional properties on $\cG$ and $Q$.
\begin{itemize}
\item[($\cG.1$)] There exists $ \cC_\cG>0$  such that for any $r>0$ and $x\in \bbR^d$,
  \begin{align*}
    \frac{|\La_n(x,r)|}{(2n r)^d} \longrightarrow \cC_{\cG},
    \qquad \text{as } n \to \infty.
  \end{align*}
  
\item[($\cG.2$)] There exist $\de > 0$, a constant $C_4>0$ and $n_0 \in \bbN$ such that, for each $r > 0$ and all $n \geq n_0$,
  \begin{align}\label{eq:distance_bound}
    d(y,z) \;\leq\;  \big(C_4 \, |y - z|_\infty\big) \vee n^{1-\de},
    \qquad \forall\, y, z \in \La_n(x,r).
  \end{align}
  
\item[($\cG.3$)] There exists a symmetric matrix $\Si \in \bbR^{d \times d}$ such that for any $x \in \bbR^d$, $t > 0$ and $r > 0$,
  \begin{align*}
    Q\big[ n^{-1} X_{n^2 t} \in C(x,r) \big]
    \underset{n \to \infty}{\;\longrightarrow\;}
    \int_{C(x,r)} k^\Si_t(y) \, \mathrm{d} y,
  \end{align*}
  where $k_t^{\Si}$, defined in \eqref{eq:def_kt}, denotes the transition kernel of the Brownian motion with covariance $\Si^2$.

\item[($\cG.4$)] There exist $C_5 > 0$ and $\vr > 0$ such that for any $\de \in(0, 1)$, $\sqrt{t}/2 \geq \de$ and $x \in \bbR^d$,
  \begin{align*}
    \limsup_{n \to \infty}
    \sup_{ \substack{x_1, x_2 \in B(g_n(x), \de n) \\ t-\de^2 <s_1,s_2 \leq t}}
    n^d \, \big| q(n^2 s_1, x_1) - q(n^2 s_2, x_2) \big|
    \;\leq\;
    C_5\, \Big(\frac{\de}{\sqrt{t}}\Big)^{\!\vr} \, t^{-\frac{d}{2}}.
  \end{align*}
\end{itemize}
\begin{theorem}\label{thm:lclt_det}
  Assume that $(\cG.1)$-$(\cG.4)$ hold.  Let $K \subset \bbR^d$ and $I \subset (0,\infty)$ be compact sets. Then,
  \begin{align}\label{eq:lclt_det}
    \lim_{n \to \infty} \sup_{x \in K} \sup_{t \in I}
    \big| n^d\, q(n^2 t, g_n(x)) - \cC_{\cG}^{-1} k^{\Si}_t(x) \big|
    \;=\;
    0.
  \end{align}  
\end{theorem}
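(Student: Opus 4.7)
The strategy follows \cite{BH09}: first establish pointwise convergence of $n^d q(n^2 t, g_n(x))$ to $\cC_\cG^{-1} k_t^{\Si}(x)$ on $I \times K$, then upgrade it to the uniform statement \eqref{eq:lclt_det} via an equicontinuity argument based on $(\cG.4)$.

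For the pointwise claim at a fixed $(t,x) \in I \times K$, the basic identity is
\begin{equation*}
  Q\big[n^{-1} X_{n^2 t} \in C(x, \ve)\big]
  \;=\;
  \sum_{y \in \La_n(x, \ve)} q(n^2 t, y)
  \;=\;
  |\La_n(x, \ve)|\, q(n^2 t, g_n(x))
  \,+\, R_n(\ve),
\end{equation*}
valid for any small $\ve > 0$, where the remainder $R_n(\ve)$ records the variation of $q(n^2 t, \cdot)$ over $\La_n(x, \ve)$. Since $g_n(x)$ is a nearest point of $\cG$ to $nx$ in $|\cdot|_\infty$, non-emptiness of $\La_n(x,\ve)$ (which follows from $(\cG.1)$ for $n$ large) forces $g_n(x) \in \La_n(x, \ve)$, and every $y \in \La_n(x, \ve)$ satisfies $|y - g_n(x)|_\infty \leq 2 \ve n$. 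By $(\cG.2)$, such $y$ lies in the graph ball $B(g_n(x), 2 C_4 \ve n)$ for $n$ large. Applying $(\cG.4)$ with $\de = 2 C_4 \ve$ (legitimate as soon as $\sqrt{t}/2 \geq 2 C_4 \ve$) yields the bound $\limsup_n |R_n(\ve)|/|\La_n(x,\ve)| \leq n^{-d}\, C_5\, (2 C_4 \ve/\sqrt{t})^{\vr}\, t^{-d/2}$. Dividing the identity by $|\La_n(x, \ve)|$, invoking $(\cG.1)$ in the form $|\La_n(x,\ve)|/n^d \to \cC_\cG (2\ve)^d$, and $(\cG.3)$ to pass to the limit on the left-hand side, I obtain
\begin{equation*}
  \limsup_n
  \Big| n^d\, q(n^2 t, g_n(x))
  \,-\, \frac{1}{\cC_\cG (2\ve)^d} \int_{C(x,\ve)} k_t^{\Si}(y)\, \md y \Big|
  \;\leq\;
  \cC_\cG^{-1}\, C_5\, \bigg(\frac{2 C_4 \ve}{\sqrt{t}}\bigg)^{\!\!\vr} t^{-d/2}.
\end{equation*}
Letting $\ve \to 0$ and using continuity of $k_t^{\Si}$ to identify the limit of $(2\ve)^{-d} \int_{C(x,\ve)} k_t^{\Si}(y)\, \md y$ with $k_t^{\Si}(x)$ settles the pointwise claim.

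To promote this to uniform convergence on $I \times K$, I would exploit that $(\cG.4)$ makes the family $(t,x) \mapsto n^d q(n^2 t, g_n(x))$ asymptotically equicontinuous on $I \times K$: for any $\eta > 0$, a sufficiently small $\de > 0$ (with $\de < \sqrt{\inf I}/2$) controls the oscillation of this function on every $\de$-neighbourhood by at most $\eta/3$ once $n$ is large. Since the Gaussian limit $\cC_\cG^{-1} k_t^{\Si}(x)$ is uniformly continuous on $I \times K$, a standard three-$\eta$ argument on a finite $\de$-net of $I \times K$, combined with the pointwise convergence at the net points, concludes. The main technical subtlety is precisely this last step: $(\cG.4)$ is a $\limsup$ statement at each single base point $(t,x)$, so making the bound uniform across $I \times K$ requires covering the compact set by finitely many neighbourhoods on which $(\cG.4)$ applies, aligning the asymmetric time-window $(t - \de^2, t]$ across overlapping neighbourhoods, and taking the maximum of the resulting finitely many thresholds on $n$.
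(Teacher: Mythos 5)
Your argument is correct and follows essentially the same route as the paper: the pointwise step is the paper's $J=J_1+J_2+J_3+J_4$ decomposition with your $R_n(\ve)$ playing the role of $J_1$, handled by $(\cG.2)$ and $(\cG.4)$ exactly as in the paper, and the uniform upgrade via a finite $\de$-net of $I\times K$ together with $(\cG.4)$ and continuity of $k_t^{\Si}$ is precisely the paper's Step 2. (Minor typo: your displayed $\limsup$ bound on $|R_n(\ve)|/|\La_n(x,\ve)|$ should have $n^d$ multiplied on the left rather than $n^{-d}$ appearing inside the $\limsup_n$, but the intent and conclusion are correct.)
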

\begin{proof}
  The argument is divided into two steps.  First we derive~\eqref{eq:lclt_det} pointwise in $t$ and $x$.  Subsequently, we extend the convergence to hold uniformly in $t \in I$ and $x \in K$ via a covering argument. 
  
  \textsc{Step~1.} Fix any $x \in \bbR^d$ and $t > 0$.  For $r > 0$ and $n \in \bbN$ let
  \begin{align}\label{eq:J}
    J(n,r)
    \;\ldef\;
    J(n,r,t,x)
    \;=\;
    Q\big[ n^{-1} X_{n^2t} \in C(x,r)\big]
    \,-\, \int_{C(x,r)} k^\Si_t(y)\, \mathrm{d}y.
  \end{align}
  Now we rewrite~\eqref{eq:J} as $J(n,r) = J_1(n,r) + J_2(n,r) + J_3(n,r) + J_4(n,r)$, where
  \begin{align*}
    J_1(n,r)
    &\;=\;
    \sum_{z \in \La_n(x,r)} \big( q(n^2 t, z) - q(n^2 t, g_n(x))\big),
    \\[.5ex]
    J_2(n,r)
    &\;=\;
    \frac{|\La_n(x,r)|}{n^d}
    \Big( n^d \, q\big( n^2 t, g_n(x)\big) \,-\, \cC_{\cG}^{-1} k^\Si_t(x)\Big),
    \\[2ex]
    J_3(n,r)
    &\;=\;
    k^{\Si}_t(x)\,
    \Big( |\La_n(x,r)|\, \cC_{\cG}^{-1}\, n^{-d} \,-\, (2r)^d \Big),
    \\[1.5ex]
    J_4(n,r)
    &\;=\;
    \int_{C(x,r)} \big( k^{\Si}_t(x) \,-\, k^{\Si}_t(y) \big)\, \mathrm{d} y.
  \end{align*}
  By rearranging those terms we get
  \begin{align}\label{eq:target}
    \big| n^d \, q(n^2 t, g_n(x)) \,-\, \cC_{\cG}^{-1}\, k^\Si_t(x) \big|
    \;\leq\;
    \frac{n^d}{|\La_n(x,r)|}\,
    \big(|J| \,+\, |J_1| \,+\, |J_3| \,+\, |J_4|\big).
  \end{align}
  Thus, it suffices to show that the right hand side goes to zero when we first take the limit $n \to \infty$ and then $r \to 0$. First, it follows directly from $(\cG.1)$ and $(\cG.3)$ that
  \begin{align}\label{eq:JandJ3}
    \lim_{n\to \infty}
    \frac{n^d}{|\La_n(x,r)|}\, \big( |J| \,+\, |J_3| \big)
    \;=\;
    0.
  \end{align}
  Moreover, by the continuity of $k^{\Si}_t$, Lebesgue's differentiation theorem and $(\cG.1)$,
  \begin{align}\label{eq:J4}
    \lim_{r \to 0} \lim_{n \to \infty}
    \frac{n^d  J_4}{|\La_n(x,r)|}
    \;=\;
    \lim_{r \to 0}
    \frac{1}{\cC_{\cG} (2r)^d}
    \int_{C(x,r)} \big( k^{\Si}_t(x) \,-\, k^{\Si}_t(y) \big)\, \mathrm{d} y
    \;=\;
    0.
  \end{align}
  We are left with handling the summand involving $|J_1|$.  We begin by comparing $\La_n(x,r)$ with balls in the graph distance.  By $(\cG.1)$ we can find $\overline{n} \in \bbN$ large enough such that $|\La_n(x,r)| > 0$ and $g_n(x) \in \La_n(x,r)$ for all $n \geq \overline{n}$.  It follows from $(\cG.2)$, after possibly choosing a larger $\overline{n}$, that for all $n \geq \overline{n}$ and all $y \in \La_n(x,r)$,
  \begin{align}
    d(y, g_n(x))
    \;\leq\;
    \big( C_4 \, |y - g_n(x)|_{\infty} \big) \vee n^{1-\de}
    \;\leq\;
    (2 r c)\, n.
  \end{align}
  Thus $\La_n(x,r) \subset B\big(g_n(x), (2rc)n\big)$, whenever $n \geq \overline{n}$.  Thus, for all $n \geq \overline{n}$ (which may depend on $r$),
  \begin{align*}
    \frac{n^d |J_1(n,r)|}{|\La_n(x,r)|}
     &\;\leq\;
     \max_{z \in \La_n(x,r)} n^d\, \big| q(n^2 t, z) \,-\, q(n^2 t, g_n(x) \big|
     \\[1ex]  
     &\;\leq\;
     \max_{z \in  B(g_n(x), (2rc)n)} n^d\,
     \big| q(n^2 t, z) \,-\, q(n^2 t, g_n(x) \big|. 
  \end{align*}
  Now an application of $(\cG.4)$ gives
  \begin{align}\label{eq:J1}
    \lim_{r \to 0} \limsup_{n \to \infty} \frac{n^d |J_1(n,r)|}{|\La_n(x,r)|}
    \;\leq\;
    \lim_{r \to 0} c\, \Big(\frac{r}{\sqrt{t}}\Big)^{\vr} t^{-\frac{d}{2}}
    \;=\;
    0.
  \end{align}
  Combining~\eqref{eq:JandJ3},~\eqref{eq:J4} and~\eqref{eq:J1} in~\eqref{eq:target} we get for any fixed $x \in \bbR^d$ and $t  >0$,
  \begin{align}\label{eq:pointwiseCLT}
    \lim_{n \to \infty}
    \big| n^d\, q(n^2 t, g_n(x)) \,-\, \cC_{\cG}^{-1}\, k^{\Si}_t(x) \big|
    \;=\;
    0.
  \end{align}

  \textsc{Step~2.} We now prove the full result using a covering argument. For $\eta \in (0,1) \cap \bbQ$ we define the set $\cX \ldef \{ (y,s)\in (K \times I) \cap (\eta \bbZ^d \times \eta^2 \bbZ) \}$ and for all $x \in K$, $t \in I$ we write $\big( y(x), s(t) \big)$ for a ``closest'' point to $(x,t)$ in $\cX$ so that
  \begin{align}\label{eq:points}
    \big|x - y(x)\big|_{\infty}
    \;\leq\;
    \eta,
    \qquad t \in \big(s(t) - \eta^2, s(t)\big].
  \end{align}
  We know that~\eqref{eq:pointwiseCLT} holds for all $(y,s) \in \cX$.  As $\cX$ is a finite set, for a given $\ve > 0$, we can find $\widetilde{n} \in \bbN $ such that for all $n \geq \widetilde{n}$,
  \begin{align}\label{eq:eps}
    \sup_{(y,s) \in \cX}
    \big| n^d\, q(n^2 s, g_n(y)) \,-\, \cC_{\cG}^{-1} k^{\Si}_s(y) \big|
    \;\leq\;
    \ve,
  \end{align}
  and from $(\cG.4)$ we deduce, after taking $\widetilde{n}$ larger if necessary, that for all $n \geq \widetilde{n}$,
  \begin{align}\label{eq:eps2}
    \sup_{(y,s) \in \cX}
    \sup_{
      \substack{x_1, x_2 \in B(g_n(y), \eta n) \\ s - \eta^2 < s_1,s_2 \leq s}
    } n^d\, \big| q(n^2 s_1, x_1) \,-\, q(n^2 s_2, x_2) \big|
    \;\leq\;
    c\, \Big(\frac{\eta}{\sqrt{T}}\Big)^{\vr}\, T^{-\frac{d}{2}},
  \end{align}
  where $T \ldef \inf I > 0$.  On the other hand, for any $x \in K$, $t \in I$ and $n \geq \widetilde{n}$,
  \begin{align}
    &\big| n^d\, q(n^2 t, g_n(x)) \,-\, \cC_{\cG}^{-1} k^{\Si}_t(x) \big|
    \nonumber\\[.5ex]  
    \label{eq:L1}
    &\mspace{36mu}\leq\;
    \big| n^d\, q(n^2 t, g_n(x)) \,-\, n^d\, q\big(n^2 s(t), g_n(y(x)\big)\big|
    \\[.5ex]
    \label{eq:L2}
    &\mspace{72mu}+\,
    \big|
      n^d\, q\big(n^2 s(t), g_n(y(x))\big)
      \,-\, \cC_{\cG}^{-1} k^{\Si}_{s(t)}(y(x))
    \big|
    \\[.5ex]
    \label{eq:L3}
    &\mspace{72mu}+\,
    \big| \cC_{\cG}^{-1} k^{\Si}_{s(t)}(y(x))- \cC_{\cG}^{-1} k^{\Si}_t(x)\big|.
  \end{align}
  We estimate each term individually.  By means of~\eqref{eq:eps2} we can bound~\eqref{eq:L1} by $\ve$ for $\eta$ small enough.  Clearly,~\eqref{eq:L2} is bounded by $\epsilon$ thanks to~\eqref{eq:eps}.  Finally, the regularity of $k_t^{\Si}(x)$ in space and time, together with~\eqref{eq:points} implies that~\eqref{eq:L3} is bounded by $\ve$ uniformly in $x \in K$ and $t \in I$ for $\eta$ small enough.  Hence, there exists $\widetilde{n} \in \bbN$ such that for all $n \geq \widetilde{n}$,
  \begin{align*}
    \sup_{x \in K} \sup_{t \in I}
    \big| n^d\, q(n^2 t, g_n(x)) \,-\, \cC_{\cG}^{-1} k^{\Si}_t(x) \big|
    \;\leq\;
    3 \ve,
  \end{align*}
  which is the desired conclusion.
\end{proof}
\begin{remark}
  If in addition the on-diagonal estimate 
  \begin{align*}
    n^d\, q(n^2 t,g_n(x)) \;\leq\; c\, t^{-d/2}, \qquad \forall\, t>0,
  \end{align*}
  is available, then~\eqref{eq:lclt_det} can be extended to hold uniformly in $t \in [s,\infty)$ for any fixed $s > 0$.  In fact, in that case both $n^d q(n^2 t,g_n(x))$ and $k^{\Si}_t(x)$ converge to zero as $t \to \infty$. 
\end{remark}

\section{Local CLT for the dynamic RCM on $\bbZ^d$}
\label{sec:dyn_lattice}
In this section we will work again in the setting introduced in Section~\ref{sec:setting_intro}.  We aim at applying Theorem~\ref{thm:lclt_det} to the dynamic RCM to prove Theorem~\ref{thm:dyn_lclt}.  The main step will be the verification of condition $(\cG.4)$ based on the oscillation inequality in Theorem~\ref{thm:osc} and on the fact that, for $\prob$-a.e.\ $\om$, the function $u=p^\om(0,0; \cdot,\cdot)$  satisfies $\partial_tu=\cL^\om_t u$, see Proposition~\ref{prop:forwardeq}. Another ingredient will be the following version of the ergodic theorem.
\begin{prop} \label{prop:krengel_pyke}
  Let
  \begin{align*}
    \cQ
    \;\ldef\;
    \big\{
      I \times B :
      \text{
        $I\subset \bbR$ non-empty compact interval,
        $B$ closed Euclidean ball in $\bbR^d\!$
      }
    \big\}.
  \end{align*}
  Suppose that Assumption~\ref{ass:P} holds.  Then, for any $f \in L^1(\Om)$,
  \begin{align*}
    \lim_{n \to \infty} \sup_{I \times B \in \cQ}
    \bigg|
      \frac{1}{n^{d+2}}\,
      \int_{n^2 I} \sum_{x \in (nB) \cap \bbZ^d}\mspace{-12mu}
        f \circ \tau_{t,x}\;
      \mathrm{d}t
      \,-\,
      |I \times B| \cdot \mean\!\big[f\big]
    \bigg|
    \;=\;
    0, \qquad \prob\text{-a.s.}
  \end{align*}
\end{prop}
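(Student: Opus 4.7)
This is a Krengel--Pyke type uniform ergodic theorem for the jointly measurable and ergodic action $\tau$ of $\bbR \times \bbZ^d$ on $\Omega$ guaranteed by Assumption~\ref{ass:P}. Denote by $M_n(f;I\times B)(\om)$ the averaged sum appearing in the proposition. My first step would be to apply the multiparameter Birkhoff--Wiener ergodic theorem at a fixed $I \times B \in \cQ$, yielding $\prob$-a.s.
\begin{equation*}
\frac{1}{n^{d+2}}\int_{n^2 I}\sum_{x\in(nB)\cap\bbZ^d}f\circ\tau_{t,x}\,\mathrm{d}t \;\longrightarrow\; |I\times B|\,\mean[f], \qquad n\to\infty,
\end{equation*}
for both $f$ and $|f|$. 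Let $\cQ_0\subset\cQ$ be the countable subfamily of products of intervals with rational endpoints and closed Euclidean balls with rational centres and radii. Intersecting the null sets over $\cQ_0$ (and over $f$, $|f|$) produces a full-measure event $\Omega_0$ on which all of these limits hold simultaneously.

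Next I would extend the convergence from $\cQ_0$ to the whole of $\cQ$ on $\Omega_0$ via a sandwich argument. Given $\om \in \Omega_0$, any $I\times B\in\cQ$ and $\varepsilon>0$, choose $(I_\pm,B_\pm)\in\cQ_0$ satisfying $I_-\times B_-\subset I\times B\subset I_+\times B_+$ with $|(I_+\times B_+)\setminus(I_-\times B_-)|<\varepsilon$. Decomposing $f=f^+-f^-$ and using the monotonicity of the averaging operator in the domain separately for $f^+$ and $f^-$, the deviation $|M_n(f;I\times B)-|I\times B|\mean[f]|$ is bounded by $M_n(|f|;(I_+\times B_+)\setminus(I_-\times B_-))$ plus a vanishing error coming from the convergence of $M_n$ for the finitely many elements of $\cQ_0$ used. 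Passing to $n\to\infty$ and then $\varepsilon\to0$ gives the pointwise statement at every $I\times B$.

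The main obstacle is upgrading this to \emph{uniform} convergence over $\cQ$, whose parameter space is both uncountable and unbounded in location. For $I\times B$ confined to a fixed compact region $[-R,R]^{d+1}$, the parameter space of positions, lengths and radii is compact and the family has uniformly bounded eccentricity, so a finite $\varepsilon$-net in $\cQ_0$ combined with the sandwich above delivers uniform convergence. To control the supremum over $I\times B$ of arbitrary location, I would invoke stationarity of $\prob$ under $\tau$ together with a Wiener-type maximal inequality for the $\bbR\times\bbZ^d$ action to show that boundary/far-away contributions are uniformly small in $n$, and then let $R\to\infty$. The structural fact making this work is that products of intervals and Euclidean balls form a Vitali basis of uniformly bounded eccentricity, which is precisely the setting in which Krengel--Pyke uniform ergodic theorems hold; the novel input here is simply that Assumption~\ref{ass:P} furnishes an ergodic action for which this machinery applies.
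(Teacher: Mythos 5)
Your proposal takes essentially the same route as the paper's proof, which simply invokes Krengel--Pyke~\cite{KP87} and observes that their arguments extend to the anisotropic scaling; you reconstruct the ingredients of that argument (pointwise Wiener/Tempelman theorem, sandwich reduction to a countable dense subclass $\cQ_0$, maximal inequality for the full supremum). Two remarks are in order, however. First, the paper emphasises that the only non-routine point is precisely the $(n^2,n)$-scaling in time and space, which you do not address: the sets $n^2 I\times nB$ are \emph{not} isotropic dilations of a fixed shape, so invoking ``the multiparameter Birkhoff--Wiener ergodic theorem'' as a black box at a fixed $I\times B$ is not quite enough---one must check (this is easy, via the Tempelman/doubling property of these product sets) that both the pointwise theorem and the Wiener maximal inequality survive the anisotropic rescaling. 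Second, your concluding claim that ``products of intervals and Euclidean balls form a Vitali basis of uniformly bounded eccentricity'' is not correct: the aspect ratio of $I\times B$ is unconstrained over $\cQ$, and the anisotropic dilation drives eccentricity to infinity as $n\to\infty$. What actually makes the Krengel--Pyke machinery apply is the doubling/Tempelman regularity of product sets, not bounded eccentricity. Neither issue breaks the overall plan, but the second misidentifies the structural mechanism.
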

\begin{proof}
  For discrete multiparameter processes such a uniform ergodic theorem under standard scaling has been shown, for instance, in \cite[Theorem~1]{KP87} and the corresponding result for continuous parameter processes in \cite[Theorem~2]{KP87}.  The claim, involving different scaling in space and time, follows by the same arguments.
\end{proof}
As a direct consequence from Proposition~\ref{prop:krengel_pyke} we get the following lemma.
\begin{lemma}\label{lem:erg_deterministic} 
  Suppose that Assumptions~\ref{ass:P} and \ref{ass:moment} hold.  Then, $\prob$-a.s., for any $x \in \bbR^d$, $\de \in (0,1)$ and $t \geq \de^2$,
  \begin{align*}
    K_{\mu}
    &\;\ldef\;
    \limsup_{n \to \infty}
    \|1 \vee \mu^{\om}\|_{p, p, n^2[t-\de^2,t] \times B(g_n(x), \de n)}
    \;<\;
    \infty,
    \\[.5ex]
    K_{\nu}
    &\;\ldef\;
    \limsup_{n \to \infty}
    \|1 \vee \nu^{\om}\|_{q, q, n^2[t-\de^2,t] \times B(g_n(x), \de n)}
    \;<\;
    \infty.
  \end{align*}
\end{lemma}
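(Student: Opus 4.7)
The plan is to reduce both bounds to a direct application of Proposition~\ref{prop:krengel_pyke}, applied to the observables $f_\mu(\om) \ldef (1 \vee \mu_0^\om(0))^p$ and $f_\nu(\om) \ldef (1 \vee \nu_0^\om(0))^q$. I would first check that $f_\mu, f_\nu \in L^1(\Om)$: since $\mu_0^\om(0)$ is the sum of the $2d$ weights $\om_0(e)$ with $0 \in e$, Jensen's inequality gives $\mean[\mu_0^\om(0)^p] \leq (2d)^{p-1} \sum_{e \ni 0} \mean[\om_0(e)^p]$, which is finite by Assumption~\ref{ass:moment}, and the same reasoning handles $f_\nu$. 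The shift covariance built into $(\tau_{s,y})$ then yields the key identity $f_\mu(\tau_{s,y}\om) = (1 \vee \mu_s^\om(y))^p$ (and its analogue for $f_\nu$) directly from the definition in \eqref{eq:shift:space-time}.

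Next I would unpack the space-time averaged norm as
\begin{align*}
  \Norm{1 \vee \mu^\om}{p,p, n^2[t-\de^2,t] \times B(g_n(x), \de n)}^{p}
  \;=\;
  \frac{1}{\de^2\, n^2\, |B(g_n(x), \de n)|}
  \int_{n^2[t-\de^2, t]}\,
  \sum_{y \in B(g_n(x), \de n)}\! f_\mu(\tau_{s,y}\om)\;
  \md s.
\end{align*}
The main bookkeeping step is to replace the graph ball (an $\ell^1$-ball in $\bbZ^d$) by a Euclidean ball so that Proposition~\ref{prop:krengel_pyke} becomes applicable. Using $\|\cdot\|_2 \leq \|\cdot\|_1$ together with $|g_n(x) - nx|_\infty \leq 1/2$, one checks that $B(g_n(x), \de n) \subset n\tilde B(x, 2\de) \cap \bbZ^d$ for all $n$ larger than some absolute constant, where $\tilde B(x, r)$ denotes the closed Euclidean ball of radius $r$ around $x$; and the standard volume estimate for $\ell^1$-balls gives $|B(g_n(x), \de n)| \geq c_d (\de n)^d$ for some constant $c_d > 0$ depending only on $d$.

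Extending the sum over $y$ to the larger Euclidean ball, the displayed quantity above is dominated by
\begin{align*}
  \frac{1}{c_d\, \de^{d+2}} \cdot
  \frac{1}{n^{d+2}}
  \int_{n^2[t-\de^2, t]}\,
  \sum_{y \in n\tilde B(x,2\de) \cap \bbZ^d}\! f_\mu(\tau_{s,y}\om)\;
  \md s.
\end{align*}
Proposition~\ref{prop:krengel_pyke} now yields, on a single $\prob$-full event $\Om_0$ and uniformly over $I \times B \in \cQ$, that such rescaled integrals converge to $|I \times B| \cdot \mean[f_\mu]$. Hence on $\Om_0$ the $\limsup$ is bounded by $c_d^{-1} \de^{-d-2} \cdot \de^2 \cdot |\tilde B(x,2\de)| \cdot \mean[f_\mu]$, which is finite and depends only on $d$ and $\mean[f_\mu]$, so $K_\mu < \infty$. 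The argument for $K_\nu$ is identical with $f_\nu$ in place of $f_\mu$, using the moment bound $\mean[\om_0(e)^{-q}] < \infty$. I do not foresee any real obstacle; the only subtle point is that the $\prob$-full event $\Om_0$ must be chosen uniformly in $x, t, \de$, but this is automatic from the uniformity over $\cQ$ built into Proposition~\ref{prop:krengel_pyke}.
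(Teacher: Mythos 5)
Your proof is correct and fills in exactly the details the paper leaves implicit when it calls the lemma ``a direct consequence'' of Proposition~\ref{prop:krengel_pyke}: apply the ergodic theorem to $f_\mu = (1\vee\mu_0^\om(0))^p$ and $f_\nu=(1\vee\nu_0^\om(0))^q$ (both in $L^1(\Om)$ by Assumption~\ref{ass:moment}), unwind the averaged norm via shift covariance, and control the graph-ball/Euclidean-ball discrepancy for large $n$. This is the intended argument; no issues.
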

\begin{prop}\label{prop:fromosc}
  Let $\de \in (0,1)$, $\sqrt{t}/2 \geq \de$ and $x \in \bbR^d$ be fixed.  Then, there exist positive constants $C_6$ and $\vr$ only depending on $K_{\mu}$ and $K_{\nu}$ such that
  \begin{align*}
    \limsup_{n \to \infty}
    \sup_{
    \substack{
      x_1, x_2 \in B(g_n(x), \de n ) \\
      t-\de^2 < s_1, s_2 \leq t}
    } n^d\, \big | p^{\om}(0,0;n^2 s_1,x_1) \,-\, p^{\om}(0,0;n^2s_2,x_2)\big|
    \;\leq\;
    C_6 \left(\frac{\de}{\sqrt{t}}\right)^{\!\vr} t^{-\frac{d}{2}}.
  \end{align*}
\end{prop}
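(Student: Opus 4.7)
The plan is to apply the iterated oscillation argument underlying Corollary~\ref{cor:cont_caloric} to the function $u(\tau, y) \ldef p^{\om}(0, 0; \tau, y)$, combined with the near-diagonal heat kernel bound from Corollary~\ref{cor:hk_upper}, both centred at the space-time point $(n^{2} t, g_{n}(x))$. First I would invoke Proposition~\ref{prop:forwardeq} from the appendix to assert that $u$ is a strictly positive solution of the forward equation $\partial_{\tau} u = \cL_{\tau}^{\om} u$ on $(0,\infty) \times \bbZ^{d}$. Then I would use the uniform ergodic theorem (Proposition~\ref{prop:krengel_pyke}) via Lemma~\ref{lem:erg_deterministic} to conclude that, $\prob$-a.s.\ and for all sufficiently large $n$, the averaged norms of $1 \vee \mu^{\om}$ and $1 \vee \nu^{\om}$ on any diffusively scaled space-time cylinder around $(n^{2}t, g_{n}(x))$ are bounded by constants depending only on $K_{\mu}, K_{\nu}$; this verifies Assumption~\ref{ass:lim_const} and produces a contraction constant $\bar{\ga} = \ga(\bar{\mu}, \bar{\nu}) \in (0,1)$ that is \emph{uniform} in the scale.

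Next, with $\vartheta \in (0, 1/2)$ from Theorem~\ref{thm:osc}, I would fix a small $\alpha \in (0, 1/2)$ (e.g.\ $\alpha = 1/4$) and set $\de_{k} \ldef \vartheta^{k} \alpha \sqrt{t}$ and $Q_{k} \ldef [n^{2} t - (\de_{k} n)^{2}, n^{2} t] \times B(g_{n}(x), \de_{k} n)$. I would iterate Theorem~\ref{thm:osc} a total of $k_{0} \ldef \lceil \log(\alpha \sqrt{t}/\de)/\log(1/\vartheta) \rceil$ times, chosen so that $Q_{k_{0}}$ still contains $[n^{2}(t - \de^{2}), n^{2} t] \times B(g_{n}(x), \de n)$. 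This yields
\begin{align*}
  \osc_{Q_{k_{0}}} u \;\leq\; \bar{\ga}^{k_{0}}\, \max_{Q_{0}} u
  \;\leq\; C\, (\de/\sqrt{t})^{\vr}\, \max_{Q_{0}} u,
\end{align*}
with $\vr = \log(1/\bar{\ga})/\log(1/\vartheta) > 0$. The choice of $\alpha$ ensures that for every $\tau \in [n^{2} t(1-\alpha^{2}), n^{2} t]$ one has $\alpha n\sqrt{t} \leq \sqrt{\tau}/2$, so Corollary~\ref{cor:hk_upper} applies on all of $Q_{0}$ and delivers $\max_{Q_{0}} u \leq C(K_{\mu}, K_{\nu})\, (n^{2} t)^{-d/2} = C(K_{\mu}, K_{\nu})\, n^{-d} t^{-d/2}$. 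Since every pair $(n^{2} s_{i}, x_{i})$ from the supremum in the proposition lies in $Q_{k_{0}}$, multiplying by $n^{d}$ and taking the supremum followed by $\limsup_{n\to\infty}$ would yield the claimed bound, with $C_{6}$ and $\vr$ depending only on $K_{\mu}, K_{\nu}$.

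The hardest part will be the bookkeeping of scales: $\alpha$ must be picked small enough that the hypothesis of Corollary~\ref{cor:hk_upper} is satisfied on the \emph{whole} outer cylinder $Q_{0}$, while one must still verify via Lemma~\ref{lem:erg_deterministic} that the averaged weighted norms of $\mu^{\om}$ and $\nu^{\om}$ are uniformly controlled on \emph{every} nested cylinder $Q_{k}$ as $n \to \infty$, not just on a single one. This uniformity, and hence the independence of the contraction constant $\bar{\ga}$ from the scale $\de_{k}$, is exactly what the uniform Krengel--Pyke ergodic theorem in Proposition~\ref{prop:krengel_pyke} provides, and it is the key point that makes the iteration run to produce a macroscopic-scale H\"older modulus.
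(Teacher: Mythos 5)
Your proposal follows essentially the same strategy as the paper's proof: iterate the oscillation inequality of Theorem~\ref{thm:osc} on a nested sequence of diffusively scaled cylinders centred at $(n^{2}t, g_{n}(x))$, feeding in the uniform ergodic control of Lemma~\ref{lem:erg_deterministic} to get a scale-independent contraction $\bar\ga$, and then cap the right-hand side with the near-diagonal bound from Corollary~\ref{cor:hk_upper}. The explicit invocation of Proposition~\ref{prop:forwardeq} is good hygiene that the paper leaves implicit, and your emphasis on the uniformity supplied by the Krengel--Pyke theorem is exactly the key point.

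There is, however, a small gap introduced by your choice $\alpha=1/4$. You set $\de_{0}=\alpha\sqrt{t}=\sqrt{t}/4$, but the hypothesis only requires $\de\le\sqrt{t}/2$; for $\de\in[\sqrt{t}/4,\sqrt{t}/2]$ your $k_{0}=\lceil\log(\alpha\sqrt{t}/\de)/\log(1/\vt)\rceil\le 0$, the chain $Q_{0}\supset\cdots\supset Q_{k_{0}}$ degenerates, and the iteration produces nothing. The paper avoids this by taking $\alpha=1/2$, so that $\de_{0}=\sqrt{t}/2\ge\de$ always and $Q_{k_{0}}$ genuinely contains the target region. Your worry that Corollary~\ref{cor:hk_upper} might not apply on the whole outer cylinder when $\alpha=1/2$ is resolved by re-centring: for a point $(\tau,y)\in Q_{0}$, apply the corollary with $x_{2}=y$, so the spatial condition $y\in B(x_{2},\sqrt{\tau}/2)$ is trivial, and use the ergodic theorem to bound the averaged norms on the (slightly larger but still diffusively scaled) cylinders $[0,\tau]\times B(y,\sqrt{\tau})$ uniformly in $y$. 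If you prefer to keep $\alpha=1/4$, you must separately note that for $\de\ge\sqrt{t}/4$ the claim follows trivially from $n^{d}\sup u\le c\,t^{-d/2}$ together with $(\de/\sqrt{t})^{\vr}\ge 4^{-\vr}$, at the cost of enlarging $C_{6}$; without that observation the argument does not cover the stated range of $\de$.
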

\begin{proof}
  This follows from the oscillation inequality in Theorem~\ref{thm:osc} similarly as Corollary~\ref{cor:cont_caloric}.  To apply Theorem~\ref{thm:osc}, choose $t_0 = n^2 t$, $x_0 = g_n(x)$ $p=p'$ and $q=q'$ with $p$ and $q$ from Assumption~\ref{ass:moment}, and  take $\vt \in (0,1/2)$ from Theorem~\ref{thm:osc}.  Set $\de \ldef \vt^k \sqrt{t} /2$, $k \geq 0$ and
  \begin{align*}
    Q_k
    \;\ldef\;
    n^2 \big[t- \de_k^2, t\big] \times B\big(g_n(x),\de_k n\big),
    \qquad k\geq 0.
  \end{align*}
  Choose $k_0 \in \bbN$ such that $\de_{k_0}\geq \de > \de_{k_0+1}$.  Then, $\de_{k} \in [\de, \sqrt{t}]$ for every $k \leq k_0$.  In view of Lemma~\ref{lem:erg_deterministic} we can find $N_7 = N_7(x, t, \de) \in \bbN$ such that for all  $n\geq N_7$,
  \begin{align*}
    \max\big\{ \|1 \vee \mu\|_{p,p,Q_k}, \|1\vee \nu\|_{q,q,Q_k} \big\}
    \;<\;
    2 (K_\mu \vee K_\nu),
    \quad \forall\, k = 0, \ldots, k_0.
  \end{align*}
  It follows that we can apply the oscillation inequality iteratively with a common constant $\bar{\ga} \in (0,1)$ only depending on $K_{\mu}$ and $K_{\nu}$ for all $n \geq N_7$, so that
  \begin{align*}
    \osc_{Q_{k}} p^{\om}(0,0; \cdot,\cdot)
    \;\leq\;
    \bar{\ga}\, \osc_{Q_{k-1}} p^{\om}(0,0; \cdot,\cdot),
    \qquad \forall\, k = 1, \ldots, k_0,
  \end{align*}
  and by iteration
  \begin{align}\label{eq:preHolder2}
    \osc_{Q_{k_0}} p^{\om}(0,0;\cdot,\cdot)
    \;\leq\;
    \bar{\ga}^{k_0}\, \sup_{Q_{0}} p^{\om}(0,0;\cdot,\cdot).
  \end{align}
  Note that $\bar{\ga}^{k_0} \leq c \big(\de / \sqrt{t}\big)^\vr$, for some positive constants $\vr$ and $c$ only depending on $\bar{\ga}$.  Further, we can bound the right hand side of~\eqref{eq:preHolder2} by using the on-diagonal bound in Corollary~\ref{cor:hk_upper},
  \begin{align*}
    n^d\, \sup_{Q_0} \, p^\om(0,0;\cdot,\cdot)
    \;\leq\;
    c\, t^{-d/2}.
  \end{align*}
  Finally, since
  \begin{align*}
    Q_{k_0}
    \;=\;
    n^2 [t -\de_{k_0}^2, t] \times B\big(g_n(x), \de_{k_0} n \big)
    \supset
    n^2 [t-r,t] \times B\big(g_n(x), \de n\big),
  \end{align*}
  the claim follows.
\end{proof}
\begin{proof}[Proof of Theorem~\ref{thm:dyn_lclt}]
  We apply Theorem~\ref{thm:lclt_det}.  Since in the present setting $\cG=\bbZ^d$, conditions ($\cG.1$) and ($\cG.2$) are obviously satisfied. Condition ($\cG.3$) follows from the invariance principle in Theorem~\ref{thm:dyn_ip} established in \cite{ACDS18}.  Finally, Proposition~\ref{prop:fromosc} implies condition ($\cG.4$).
\end{proof}

Finally, we remark that a local limit theorem directly implies a near-diagonal lower heat kernel estimate, which complements the upper bounds obtained in Corollary~\ref{cor:hk_upper} above.

\begin{corro} \label{cor:near diag ests}
Suppose that Assumptions~\ref{ass:P} and \ref{ass:moment} hold. For $\bbP$-a.e.\ $\om$, there exists $N_8(\om)>0$  and $C_7=C_7(d)>0$ such that  for all $t\geq N_8(\om)$ and $x\in B(0,\sqrt{t})$,
\begin{align*}
p^\om(0,t;0,x) \; \geq \; C_7\, t^{-d/2}.
\end{align*}
\end{corro}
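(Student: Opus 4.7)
The plan is to deduce the lower bound directly from the quenched local CLT (Theorem~\ref{thm:dyn_lclt}) combined with the fact that the Gaussian density $k_s$ is bounded below on compact space-time regions. I read $p^\om(0,t;0,x)$ as $p^\om(0,0;t,x)$, which is the only interpretation consistent with the convention fixed in Section~\ref{sec:setting_intro}.

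First I would fix a convenient compact set in the scaled variables. Apply Theorem~\ref{thm:dyn_lclt} with, say, $T_1 = 1$, $T_2 = 4$, and $K = 2$. This gives, for $\prob$-a.e.\ $\om$, an integer $N_0(\om)$ such that
\begin{align*}
  \sup_{|y| \leq 2}\, \sup_{s \in [1, 4]}
  \bigl| n^d\, p^{\om}(0, 0; n^2 s, \lfloor n y \rfloor) - k_s(y) \bigr|
  \;\leq\; \tfrac{1}{2}\, c_*
  \qquad \text{for all } n \geq N_0(\om),
\end{align*}
where $c_* \ldef \inf_{|y|\leq 2,\, s \in [1,4]} k_s(y) > 0$ is a strictly positive constant depending only on $d$ and the limiting covariance matrix $\Si^2$ from Theorem~\ref{thm:dyn_ip}; this infimum is positive by continuity of $k_s(y)$ and nondegeneracy of $\Si^2$.

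Next, given $t \geq N_8(\om) \ldef (N_0(\om)+1)^2 \vee 4$ and $x \in B(0, \sqrt{t})$, I would set $n \ldef \lfloor \sqrt{t}\rfloor$, $s \ldef t/n^2$, and $y \ldef x/n$. By construction $n \geq N_0(\om)$, and $s \in [1, (n+1)^2/n^2] \subset [1, 4]$ for $n \geq 1$; moreover $|y| = |x|/n \leq \sqrt{t}/n \leq (n+1)/n \leq 2$, and $\lfloor n y \rfloor = \lfloor x \rfloor = x$ since $x \in \bbZ^d$. Hence the local CLT bound above applies at this point, yielding
\begin{align*}
  n^d\, p^{\om}(0, 0; t, x)
  \;\geq\; k_s(y) - \tfrac{1}{2} c_*
  \;\geq\; \tfrac{1}{2} c_*.
\end{align*}
Using $n^d \leq t^{d/2}$, this rearranges to $p^{\om}(0, 0; t, x) \geq (c_*/2)\, t^{-d/2}$, and I would take $C_7 \ldef c_*/2$.

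There is no real obstacle here; the only care needed is matching the scaling so that the triple $(n, s, y)$ lies in the compact domain on which Theorem~\ref{thm:dyn_lclt} gives uniform convergence, and translating the index bound $n \geq N_0(\om)$ into a bound on $t$ via $n = \lfloor \sqrt{t}\rfloor$. The constant $C_7$ depends on $d$ through the dimension of $k_s$ and on $\Si$ through the uniform lower bound on $k_s$ over $[1,4] \times B(0, 2)$; since $\Si$ is a deterministic function of the law $\prob$ (cf.\ Theorem~\ref{thm:dyn_ip}), this is consistent with the claimed dependence.
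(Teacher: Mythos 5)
Your proof is correct and takes essentially the same route as the paper, which simply invokes \cite[Lemma~5.3]{ADS16}: both derive the near-diagonal lower bound by fixing a compact set in the scaled variables, applying the quenched local CLT of Theorem~\ref{thm:dyn_lclt}, and using that the Gaussian kernel $k_s(y)$ is bounded below on that compact set. The arithmetic matching $(n,s,y) = (\lfloor\sqrt{t}\rfloor, t/n^2, x/n)$ into the window $[1,4]\times B(0,2)$ and the conversion of $n\geq N_0(\om)$ into $t\geq N_8(\om)$ are the same bookkeeping steps that the cited lemma carries out.
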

\begin{proof}
This follows from Theorem~\ref{thm:dyn_lclt} exactly as  in \cite[Lemma 5.3]{ADS16}.
\end{proof}

\section{Local CLT for the static RCM on random graphs} \label{sec:rg}
As a further application of the oscillation bound in Theorem~\ref{thm:osc} we present in this final section a local limit theorem for the static RCM on a class of random graphs. On $(\bbZ^d, E_d)$ we consider the conductances  $\om=\{\om(e), e\in E_d\} \in \Om \ldef [0,\infty)^{E_d}$, which are now time-independent but possibly taking the value zero. We call an edge $e \in E_d$ \emph{open} if $\om(e) > 0$ and denote by $\cO(\om)$ the set of open edges. We write $x \sim y$ if $\{x,y\} \in \cO(\om)$. Again we equip $\Om$ with a $\si$-algebra $\cF$ and a probability measure $\prob$.
\begin{assumption} \label{ass:P2}
  \begin{enumerate}[(i)]
  \item The law $\prob$ is stationary and ergodic w.r.t.\ space shifts of $\bbZ^d$.

  \item For $\prob$-a.e.\ $\om$, there exists a unique infinite cluster $\cC_{\infty}(\om)$ of open edges.  Moreover, $\prob[0\in \cC_{\infty}] > 0$.  Write $\prob_0[ \,\cdot\, ] \ldef \prob[ \,\cdot \,|\, 0 \in \cC_{\infty}]$ and $\mean_0$ for the expectation w.r.t.\ $\prob_0$.
  \end{enumerate}
\end{assumption} 
For any realization $\om\in \Om$ consider the variable speed random walk (VSRW) $X \equiv (X_t : t \geq 0)$ on $\cC_{\infty}(\om)$ with generator $\cL^{\om}$ acting on bounded functions $f\!: \cC_{\infty}(\om) \to \bbR$ as
\begin{align*}
  \big(\cL^{\om} f)(x)
  \;=\; 
  \sum_{y \sim x} \om(\{x,y\})\, \big(f(y) - f(x)\big).
\end{align*}
Notice that $X$ is reversible with respect to the counting measure. When visiting a vertex $x\in \cC_{\infty}(\om)$, the random walk $X$ waits at $x$ an exponential time with mean $1/\mu^\om(x)$ where $\mu^\om(x)\ldef \sum_{y \sim x} \om(\{x,y\})$, and then it jumps to a vertex $y \sim x$ with probability $\om(\{x,y\})/\mu^\om(x)$. We denote by $\Prob^\om_x$ the quenched law of the process $X$ starting at $x\in \cC_{\infty}(\om)$, and for $x, y \in \cC_{\infty}(\om)$ and $t \geq 0$ let $p^{\om}(t, x, y)$ be the heat kernel of $X$, i.e.\ $p^{\om}(t, x, y) \ldef \Prob_x^{\om}\big[X_t = y\big]$.

In order to state the results, we need to introduce  some further assumptions on the underlying random  graph $(\cC_{\infty}(\om), \cO(\om))$ which require some more notation. We denote by $d^{\om}$ the graph distance on $(\cC_{\infty}(\om), \cO(\om))$, i.e.\ for any $x, y \in \cC_{\infty}(\om)$, $d^{\om}(x,y)$ is the minimal length of a path between $x$ and $y$ that consists only of edges in $\cO(\om)$.  For $x \in \cC_{\infty}(\om)$ and $r \geq 0$, let $B^{\om}(x,r) \ldef {\{ y \in \cC_{\infty}(\om) : d^{\om}(x, y) \leq \lfloor r \rfloor\}}$ be the closed ball with center $x$ and radius $r$ with respect to $d^{\om}$.  Further, for any $ A \subset B \subset \bbZ^d$ we define the \emph{relative} boundary of $A$ with respect to $B$ by
\begin{align*}
  \partial_{\!B}^{\om} A
  \;\ldef\;
  \big\{
    \{x,y\} \in \cO(\om):
    x \in A \,\text{ and }\, y \in B \setminus A
  \big\}.
\end{align*}
\begin{definition}[Regular balls]\label{def:regular}
  Let $C_{\mathrm{V}} \in (0, 1]$, $C_{\mathrm{riso}} \in (0, \infty)$ and $C_{\mathrm{W}} \in [1, \infty)$ be fixed constants.  For $x \in \cC_{\infty}(\om)$ and $n \geq 1$, we say a ball $B^{\om}(x, n)$ is \emph{regular} if it satisfies the following conditions.
  \begin{enumerate}[i)]
  \item Volume regularity of order $d$, i.e.\
    \begin{align*}
      C_{\mathrm{V}}\, n^d \;\leq\; |B^{\om}(x, n)|.
    \end{align*}
  \item (Weak) relative isoperimetric inequality. There exists $\cS^{\om}(x, n) \subset \cC_{\infty}(\om)$ connected such that $B^{\om}(x, n) \subset \cS^{\om}(x, n) \subset B^{\om}(x, C_{\mathrm{W}} n)$ and
    \begin{align*}
      |\partial_{\cS^{\om}(x, n)}^{\om} A|
      \;\geq\;
      C_{\mathrm{riso}}\, n^{-1}\, |A|
    \end{align*}
    for every $A \subset \cS^{\om}(x, n)$ with $|A| \leq \tfrac{1}{2}\, |\cS^{\om}(x, n)|$.
  \end{enumerate}
\end{definition}
\begin{assumption}\label{ass:cluster} 
  For some $\th \in (0,1)$, for $\prob_0$-a.e.\ $\om$, there exists $N_0(\om) < \infty$ such that for all $n \geq N_0(\om)$ the following hold.
  \begin{enumerate}
  \item[(i)] The ball $B^{\om}(0, n)$ is $\th$-\emph{very regular}, that is, the ball $B^{\om}(x, r)$ is regular for every $x \in B^{\om}(0, n)$ and $r \geq n^{\th/d}$.  

  \item[(ii)] There exist $\de > 0$ and $C_8>0$  such that for each $r>0$,
    \begin{align*}
      d^{\om}(y,z)
      \;\leq\;
      \big(C_8 \, |y - z|_\infty\big) \vee n^{1-\de},
      \qquad \forall y,z\in \La_n(x,r).
    \end{align*}
  \end{enumerate}
\end{assumption}
Assumption~\ref{ass:cluster} is satisfied, for instance, on supercritical Bernoulli percolation clusters, see \cite{Ba04}, or clusters in percolation models with long range correlations, see \cite[Proposition~4.3]{Sa17} and \cite[Theorem~2.3]{DRS14}.  Such random graphs have typically a local irregular behaviour, meaning that the required properties in Definition~\ref{def:regular} fail on small scales. In a sense, Assumption~\ref{ass:cluster} provides a uniform lower bound on the radius of regular balls.  For more details and examples we refer to \cite[Examples~1.11--1.13]{DNS18} and references therein.  
\begin{assumption}\label{ass:pq}
  There exist $p, q \in [1, \infty]$ and $\th \in (0, 1)$ satisfying
  \begin{align} \label{eq:cond:pq_rg}
    \frac{1}{p} \,+\, \frac{1}{q}
    \;<\;
    \frac{2(1-\th)}{d-\th},
  \end{align}
  such that for any $e\in E_d$,
  \begin{align*}
    \mean\big[\om(e)^p\big] \;<\; \infty
    \qquad \text{and} \qquad
    \mean\big[\om(e)^{-q} \indicator_{\{e \in \cO\}}\big] \;<\; \infty,
  \end{align*}
  where we used the convention that $0/0 = 0$.
\end{assumption}
\begin{theorem} [QFCLT \cite{DNS18}]\label{thm:ip_rg}
  Suppose there exist $\th \in (0,1)$ and $p,q \in [1,\infty]$ such that Assumptions~\ref{ass:P2}, \ref{ass:cluster}-(i) and \ref{ass:pq} hold.  Then, for $\prob_0$-a.e.\ $\om$, the process $X^{(n)} \equiv \big(X_t^{(n)} \ldef n^{-1} X_{n^2 t}: t\geq 0\big)$, converges (under $\Prob_{\!0}^\om$) in law towards a Brownian motion on $\bbR^d$ with a deterministic non-degenerate covariance matrix $\Si^2$.
\end{theorem}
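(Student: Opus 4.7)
The plan is to implement the corrector method, the standard route to a QFCLT for a reversible random walk in an ergodic random environment. First I would consider the environment viewed from the particle, $\bar\om_t \ldef \tau_{X_t}\om$, which under Assumption~\ref{ass:P2} and the reversibility of $X$ is a stationary Markov process on $\Om_0 \ldef \{\om : 0 \in \cC_\infty(\om)\}$ with reversible measure $\mu^\om(0)\,\md\prob_0$; its ergodicity follows from the uniqueness of the infinite cluster. A Kozlov/Kipnis--Varadhan type construction then produces a corrector $\ch\!:\Om_0 \times \bbZ^d \to \bbR^d$ satisfying the cocycle identity $\ch(\om, x+y) - \ch(\om, x) = \ch(\tau_x\om, y)$ for $x,y$ in the same open cluster, such that the harmonic coordinates $\Ph(\om,x) \ldef x - \ch(\om,x)$ are $\cL^\om$-harmonic on $\cC_\infty(\om)$. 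The existence of $\ch$ in the appropriate weighted $\ell^2$-space only requires the first-moment bound $\mean_0[\mu^\om(0)]<\infty$, which is guaranteed by Assumption~\ref{ass:pq}.

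Setting $M_t \ldef \Ph(\om, X_t)$ yields an $\bbR^d$-valued $\Prob^\om_0$-martingale whose increments are stationary and ergodic under the annealed measure. The Lindeberg--Feller martingale CLT, combined with the ergodic theorem applied to the quadratic variation, then gives an invariance principle for $(n^{-1}M_{n^2 t})_{t\geq 0}$ toward a Brownian motion with a deterministic covariance $\Si^2$, non-degenerate by the ergodicity of the cluster and the $\ell^2$-bound on $\ch$.

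The main obstacle is the quenched sublinearity of the corrector,
\begin{align*}
  \lim_{n\to\infty}\; \frac{1}{n}\, \max_{x \in \cC_\infty(\om) \cap [-n,n]^d}\, |\ch(\om,x)| \;=\; 0,
  \qquad \prob_0\text{-a.s.}
\end{align*}
I would follow the now-standard two-step strategy: first, $\ell^1$-sublinearity on spatial averages via the multidimensional ergodic theorem applied to $\nabla\ch$ combined with a weighted Poincar\'e inequality on large balls, which is available since the relevant weights are controlled by Assumption~\ref{ass:pq}; second, an upgrade to $\ell^\infty$-sublinearity by means of a maximal inequality for $\cL^\om$-subharmonic functions. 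The latter step is where the moment condition \eqref{eq:cond:pq_rg} is essential: combined with the volume-regularity and isoperimetric input supplied by Assumption~\ref{ass:cluster} and Definition~\ref{def:regular}, it yields a weighted Sobolev inequality on $\cC_\infty(\om)$ sufficient to run a Moser (or De~Giorgi) iteration, much as in the time-independent restriction of the arguments developed in Section~\ref{sec:Sobolev}. The parameter $\th$ in \eqref{eq:cond:pq_rg} accounts for the irregular small-scale geometry of the cluster: on balls of radius below $n^{\th/d}$ the required regularity may fail, so the iteration must be carried out above this threshold, and this effectively inflates the exponents on the right-hand side of the $1/p + 1/q$ bound by the factor $(d-\th)/(1-\th)$.

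Having established sublinearity, I would conclude via the decomposition $n^{-1}X_{n^2 t} = n^{-1}M_{n^2 t} + n^{-1}\ch(\om, X_{n^2 t})$. The martingale term converges to a Brownian motion by the first step, while the tightness of $n^{-1}\sup_{t\leq T}|X_{n^2 t}|$ (already a byproduct of the martingale CLT modulo the correction) combined with the quenched sublinearity of $\ch$ forces the corrector contribution to vanish uniformly on compact time intervals in $\Prob^\om_0$-probability, yielding the claimed functional invariance principle.
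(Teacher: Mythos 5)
This theorem is not proven in the paper; it is taken as a black-box from \cite{DNS18}, and the paper's ``proof'' consists only of the citation. Your sketch correctly reconstructs the corrector-method argument that \cite{DNS18} actually uses: harmonic coordinates from the Kipnis--Varadhan/Kozlov construction, a martingale functional CLT, and then quenched sublinearity of the corrector via the two-step $\ell^1$-to-$\ell^\infty$ upgrade where the Sobolev inequality coming from Assumption~\ref{ass:cluster}-(i) (with effective dimension $d' = (d-\th)/(1-\th)$) feeds a Moser iteration, exactly as you describe. So there is no competing in-paper proof to compare against; your route is essentially \emph{the} route.

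Two small inaccuracies worth flagging. First, the walk here is the VSRW, whose reversible measure on $\cC_\infty(\om)$ is the counting measure; consequently the environment-viewed-from-the-particle process is reversible with respect to $\prob_0$ itself, not with respect to $\mu^\om(0)\,\md\prob_0$ (the latter weighting would be correct for the CSRW). The corrector construction then takes place in $L^2$ spaces weighted by $\om(\{0,e\})$ over edges rather than by $\mu^\om(0)$ over vertices. Second, non-degeneracy of $\Si^2$ does not follow from ergodicity plus the $\ell^2$-bound on $\ch$ alone; the usual argument deduces it \emph{from} sublinearity (a degenerate direction would force the corrector to grow linearly in that direction), so it logically comes after the hard step rather than alongside the martingale CLT. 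Neither issue affects the validity of the overall plan.
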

\begin{theorem}[Quenched local CLT] \label{thm:lclt_rg} 
  Suppose there exist $\th \in (0,1)$ and $p,q \in [1,\infty]$ such that Assumptions~\ref{ass:P2}, \ref{ass:cluster} and \ref{ass:pq} hold.  Then, for any $T_2>T_1>0$ and $K>0$,
  \begin{align*}
    \lim_{n \to \infty} \sup_{|x|\leq K} \sup_{ t\in [T_1, T_2]}
    \big|
      n^d p^{\om}(n^2t, 0,\lfloor nx \rfloor)
      - \prob[0\in \cC_{\infty}]^{-1}k^\Si_t(x)
    \big|
    \;=\;
    0,
    \qquad \prob_0\text{-a.s.,}
  \end{align*}
  with $k_t^\Si$ defined as in \eqref{eq:def_kt}.
\end{theorem}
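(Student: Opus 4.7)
The plan is to apply the general local CLT criterion of Theorem~\ref{thm:lclt_det} to $\cG = \cC_{\infty}(\om)$ and $q(t,x) = p^{\om}(t,0,x)$, which requires verifying conditions $(\cG.1)$--$(\cG.4)$. The first three are almost immediate: by the spatial ergodic theorem applied to $\indicator_{\{x \in \cC_{\infty}\}}$, $|\La_n(x,r)|/(2nr)^d \to \prob[0 \in \cC_{\infty}]$ for $\prob_0$-a.e.\ $\om$, so $\cC_{\cG} = \prob[0 \in \cC_{\infty}]$ and $(\cG.1)$ holds; $(\cG.2)$ is precisely Assumption~\ref{ass:cluster}-(ii); and $(\cG.3)$ is the QFCLT of Theorem~\ref{thm:ip_rg}. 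The substantive work lies in $(\cG.4)$.

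To obtain $(\cG.4)$, I would first check that, for $\prob_0$-a.e.\ $\om$, the graph $\cC_\infty(\om)$ satisfies Assumption~\ref{ass:graph} with $d' = (d-\th)/(1-\th)$. Volume regularity follows from Definition~\ref{def:regular}-(i) and the ergodic theorem; the Sobolev inequality \eqref{eq:sobolev} with this $d'$ follows from the weak relative isoperimetric inequality in Definition~\ref{def:regular}-(ii) via \cite[Proposition~3.5]{DNS18}, and the weak Poincar\'e inequality \eqref{eq:poincare:ineq} follows from a discrete co-area argument as in Remark~\ref{rem:ass_graph}-(ii). Observe that with $p' = q' = \infty$, condition \eqref{eq:cond:pq2} becomes $1/p + 1/q < 2/d' = 2(1-\th)/(d-\th)$, which is exactly \eqref{eq:cond:pq_rg}. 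Thus Theorem~\ref{thm:osc} is applicable to the static operator $\cL^{\om}$ on sufficiently large balls.

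Next I would establish a static analogue of Assumption~\ref{ass:lim_const}. Since the weights are time-independent, the time averaging is trivial, and by the spatial ergodic theorem applied to $\mu^{\om}(x)^p \indicator_{\{x \in \cC_\infty\}}$ and $\nu^{\om}(x)^q \indicator_{\{x \in \cC_\infty\}}$ (both integrable by Assumption~\ref{ass:pq}), one obtains that, for $\prob_0$-a.e.\ $\om$, the quantities $\Norm{1 \vee \mu^{\om}}{p,\infty,Q_n}$ and $\Norm{1\vee\nu^{\om}}{q,\infty,Q_n}$ are asymptotically bounded by deterministic constants $\bar\mu$, $\bar\nu$ uniformly over the relevant family of cylinders $Q_n$ centered near $g_n(x)$ with $x$ in a compact set. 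This yields a universal contraction factor $\bar{\ga} = \ga(\bar\mu,\bar\nu) \in (0,1)$ in \eqref{eq:oscillation_ineq}.

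Given these ingredients, I would argue verbatim as in Corollary~\ref{cor:cont_caloric} and Proposition~\ref{prop:fromosc}: iterate Theorem~\ref{thm:osc} on a dyadic chain of shrinking time-space cylinders $Q_k = n^2[t-\vt^{2k}t,t]\times B^{\om}(g_n(x), \vt^k n \sqrt{t}/2)$ to obtain $\osc_{Q_{k_0}} p^{\om}(\cdot,0,\cdot) \leq \bar{\ga}^{k_0} \sup_{Q_0} p^{\om}(\cdot,0,\cdot)$, then use the on-diagonal upper bound from Corollary~\ref{cor:hk_upper} to control $\sup_{Q_0} p^{\om}$ by $c\,t^{-d/2}$, and finally note that $\bar\ga^{k_0} \leq c(\de/\sqrt{t})^{\vr}$ for $\vr = \ln\bar\ga/\ln \vt$. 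This gives $(\cG.4)$ and Theorem~\ref{thm:lclt_det} then yields the claim.

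The principal obstacle is the verification of Assumption~\ref{ass:graph} on the random graph at all relevant scales. Unlike $\bbZ^d$, here volume regularity and isoperimetric properties fail on small scales, holding only on balls of radius at least $n^{\th/d}$ inside a $\th$-very regular ball (Assumption~\ref{ass:cluster}-(i)). One must therefore take some care in adapting the De~Giorgi cylinders used in the proofs of Theorem~\ref{thm:max:ineq} and Theorem~\ref{thm:osc} so that they fit within the regular supercluster $\cS^{\om}$, and to ensure that $g_n(x)$ -- which lies in $\bbZ^d$ rather than necessarily in $\cC_{\infty}(\om)$ -- can be replaced by a nearby point of the cluster without affecting the limiting constants; these are the same technical issues handled in \cite{DNS18} for the QFCLT, and they transfer to the present setting without essential change.
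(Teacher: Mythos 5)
Your proposal is correct and follows essentially the same route as the paper: applying Theorem~\ref{thm:lclt_det}, verifying $(\cG.1)$--$(\cG.3)$ directly (ergodic theorem, Assumption~\ref{ass:cluster}-(ii), Theorem~\ref{thm:ip_rg}), and obtaining $(\cG.4)$ by checking Assumption~\ref{ass:graph} for $\cC_\infty(\om)$ with $d' = (d-\th)/(1-\th)$, choosing $p'=q'=\infty$ so that \eqref{eq:cond:pq2} reduces to \eqref{eq:cond:pq_rg}, and then iterating the oscillation inequality of Theorem~\ref{thm:osc} with the spatial ergodic theorem as in Proposition~\ref{prop:fromosc}. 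Your closing remark about $g_n(x)$ possibly lying off the cluster is moot, since by the definition in Section~\ref{sec:localclt} one has $g_n(x) \in \cG = \cC_\infty(\om)$ by construction; the remaining points you flag (regularity only on large scales, matching the De~Giorgi cylinders to the regular superset $\cS^{\om}$) are handled by the thresholds $N_i(x)$ built into Assumption~\ref{ass:graph} together with Assumption~\ref{ass:cluster}-(i), exactly as in the paper.
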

\begin{remark}
  It appears feasible to derive a local CLT  also for a more general class of speed measures for the random walk rather than only for the VSRW as in Theorem~\ref{thm:lclt_rg}. On $(\bbZ^d,E_d)$ such a result has been shown in \cite{AT19}.
\end{remark}
\begin{proof}[Proof of Theorem~\ref{thm:lclt_rg}]
  The result follows  from Theorem~\ref{thm:lclt_det} once conditions $(\cG.1)-(\cG.4)$ are verified.  For condition $(\cG.1)$ note that for any $r>0$ and $x\in \bbR^d$ by the ergodic theorem in \cite[Theorem~1]{KP87},
  \begin{align*}
    \frac{|\La_n(x,r)|}{(2n r)^d}
    \;=\;
    \frac{1}
    {
      (2n r)^d} \sum_{y \in C(nx,nr)}
      \indicator_{\{ y\in \cC_{\infty}(\om)\}
    }
    \underset{n \to \infty}{\;\longrightarrow\;}
    \prob[0 \in \cC_{\infty}] > 0,
    \qquad \prob\text{-a.s.}
  \end{align*}
  and therefore also $\prob_0$-a.s.  Condition $(\cG.2)$ coincides with Assumption~\ref{ass:cluster}-(ii) and $(\cG.3)$ is a consequence of the invariance principle in Theorem~\ref{thm:ip_rg}.  For condition $(\cG.4)$ we aim to apply Theorem~\ref{thm:osc} together with the ergodic theorem in \cite[Theorem~1]{KP87} (cf.\ Proposition~\ref{prop:krengel_pyke} above for the space-time version), which implies $(\cG.4)$ by the same arguments as in the proof of Proposition~\ref{prop:fromosc} above.  Note that the conductances are constant in time in the present setting, so we may choose $p'=q'=\infty$ in Theorem~\ref{thm:osc} and \eqref{eq:cond:pq} reduces to \eqref{eq:cond:pq_rg}.

It remains to check that the graph $(\cC_{\infty}(\om), \cO(\om))$ satisfies  Assumption~\ref{ass:graph}.  Obviously, for every $x\in \cC_{\infty}(\om)$ and $n \geq 1$, the ball $B^{\om}(x,n)$ is contained in the corresponding ball with respect to the graph distance.  Thus, $|B^\om(x,n)| \leq c n^d$, and the volume regularity in Assumption~\ref{ass:graph}-(i) follows from this and Assumption~\ref{ass:cluster}-(i).  Furthermore,  Assumption~\ref{ass:cluster}-(i) also implies an isoperimetric inequality on large sets (see \cite[Lemma~2.10]{DNS18}), which in conjunction with the volume regularity implies the Sobolev inequality in Assumption~\ref{ass:graph}-(ii) with  $d' \ldef (d-\th)/(1-\th)$, see \cite[Proposition~3.5]{DNS18}. Finally, the weak Poincar\'{e} inequality in  Assumption~\ref{ass:graph}-(iii) follows from the  relative isoperimetric inequality provided by Assumption~\ref{ass:cluster}-(i) by applying a discrete co-area formula, see \cite[Lemma~3.3.3]{Sa96} and cf.\ Remark~\ref{rem:ass_graph} above.
\end{proof}

Similarly as in Corollary~\ref{cor:near diag ests} above, one can derive near-diagonal heat kernel estimates from Theorem~\ref{thm:lclt_rg} following the argument in \cite[Lemma~5.3]{ADS16}.

\appendix
\section{A technical estimate} \label{sec:tech}
\begin{lemma} \label{lem:A1}
  Let $g \in C^1((0, \infty))$ be a convex, non-increasing function. Assume that $g'$ is piecewise differentiable and that there exists $\ga \in (0, 1]$ such that $\ga g'(r)^2 \leq g''(r)$ for a.e.\ $r \in (0, \infty)$.  Then, for all $x, y > 0$ and $b, a \geq 0$,
  \begin{align}\label{eq:A1}
    &-\big(b^2 g'(y) - a^2 g'(x) \big)(y-x)
    \nonumber\\[.5ex]
    &\mspace{36mu}\leq\;
    \begin{cases}
      -
      \dfrac{\ga}{2}\, \big(a^2 \wedge b^2\big)\,
      \big( g(y) - g(x) \big)^{2}
      +
      \dfrac{2}{\ga}\,
      \bigg(\dfrac{a^2}{b^2} \vee \dfrac{b^2}{a^2}\bigg)
      \big(b - a\big)^2,
      & a \wedge b > 0,\\[2ex]
      \big( \!-\mspace{-4mu}x g'(x) \vee -y g'(y)\big)\, (b-a)^2,
      & a \wedge b = 0.
    \end{cases}
  \end{align}
\end{lemma}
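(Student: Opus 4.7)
The plan is to verify the two cases of \eqref{eq:A1} separately. For the main case $a \wedge b > 0$, I would parametrize the ``jump'' $b^2 g'(y) - a^2 g'(x)$ via the affine interpolations $c(t) \ldef (1-t) a + t b$ and $z(t) \ldef (1-t) x + t y$, $t \in [0,1]$, and apply the fundamental theorem of calculus to write
\begin{align*}
  b^2 g'(y) - a^2 g'(x)
  \;=\;
  \int_0^1 \bigl[ 2(b-a)\, c(t)\, g'(z(t)) + c(t)^2 (y-x)\, g''(z(t)) \bigr]\, \md t.
\end{align*}
Multiplying by $-(y-x)$ splits the left-hand side of \eqref{eq:A1} into a ``cross term'' linear in $(b-a)(y-x)$ and a ``main term'' quadratic in $(y-x)$ weighted by $g''$.

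To the cross term I would apply Young's inequality $2|AB| \leq \frac{\ga}{2} A^2 + \frac{2}{\ga} B^2$ pointwise in $t$ with $A = c(t)\, |g'(z(t))(y-x)|$ and $B = |b-a|$, and then invoke the hypothesis $g''(z) \geq \ga\, g'(z)^2$ on the main term. Exactly half of the resulting $\ga\, c(t)^2 g'(z(t))^2 (y-x)^2$ coming from $g''$ absorbs the contribution from Young, leaving
\begin{align*}
  -(b^2 g'(y) - a^2 g'(x))(y-x)
  \;\leq\;
  -\frac{\ga}{2}\, (y-x)^2 \int_0^1 c(t)^2 g'(z(t))^2\, \md t
  \,+\, \frac{2}{\ga}\, (b-a)^2.
\end{align*}
Using the pointwise lower bound $c(t)^2 \geq (a \wedge b)^2$ together with the Cauchy--Schwarz inequality applied to the identity $g(y) - g(x) = (y-x) \int_0^1 g'(z(t))\, \md t$ then produces the sharper bound $-\frac{\ga}{2} (a^2 \wedge b^2) (g(y) - g(x))^2 + \frac{2}{\ga} (b-a)^2$, from which the stated inequality follows since $a^2/b^2 \vee b^2/a^2 \geq 1$.

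For the degenerate case $a \wedge b = 0$, by symmetry it suffices to treat $a = 0$, $b > 0$; the case $b = 0 < a$ is completely analogous and $a = b = 0$ is trivial. The inequality then reduces to showing $-g'(y)(y-x) \leq (-x g'(x)) \vee (-y g'(y))$. Writing $-g'(y)(y-x) = -y g'(y) + x g'(y)$ and distinguishing $y \geq x$ (where $x g'(y) \leq 0$, hence $-g'(y)(y-x) \leq -y g'(y)$) from $y < x$ (where $-g'(y)(y-x) \leq 0 \leq -x g'(x)$) settles both subcases, using only $x, y > 0$ and $g' \leq 0$. The only delicate point in the whole proof is the calibration of the Young constant in Case~1: the split $\epsilon = \ga/2$ is essentially forced if we want exactly half of the gradient-squared integrand to be absorbed by $g''$, leaving a genuinely negative $(g(y) - g(x))^2$ contribution; any other split would either leave an unabsorbed positive term or waste the coercive part entirely.
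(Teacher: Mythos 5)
Your proof is correct, and it takes a genuinely different route from the paper. The paper starts from the algebraic identity
\begin{align*}
  -\big(b^2 g'(y) - a^2 g'(x)\big)(y-x)
  \;=\;
  -a^2\big(g'(y)-g'(x)\big)(y-x) \,-\, g'(y)(y-x)(a+b)(b-a),
\end{align*}
bounds the first summand via the inequality $\ga\big(g(y)-g(x)\big)^2 \leq \big(g'(y)-g'(x)\big)(y-x)$ (Cauchy--Schwarz on $\int_x^y g'$ combined with $\ga g'^2 \leq g''$), bounds the cross term using $0 \leq -g'(y)(y-x) \leq g(x)-g(y)$ and $|a+b|\,|b-a| \leq 2(a\vee b)|b-a|$, and only then applies Young once globally with the asymmetric parameter $\ve = \tfrac{\ga}{2}(a^2\wedge b^2)/(a^2\vee b^2)$. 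You instead differentiate $t \mapsto c(t)^2 g'(z(t))$ along the affine interpolation $(c,z)(t) = ((1-t)a+tb,\,(1-t)x+ty)$, apply Young \emph{pointwise in $t$} with the symmetric parameter $\ve = \ga/2$, absorb half the $c^2 (g')^2$ integrand with the hypothesis $g'' \geq \ga (g')^2$, and only afterwards pass to $c^2 \geq a^2 \wedge b^2$ and Cauchy--Schwarz for $\big(g(y)-g(x)\big)^2 = (y-x)^2\big(\int_0^1 g'(z)\,\md t\big)^2$. Two things your version buys: (i)~it never needs the ``WLOG $y>x$'' step, since the path identity and the pointwise estimates are sign-symmetric; and (ii)~it actually yields the strictly sharper constant $\tfrac{2}{\ga}(b-a)^2$ with \emph{no} factor $\tfrac{a^2}{b^2}\vee\tfrac{b^2}{a^2}$ --- the asymmetric ratio in the paper is an artifact of bounding $-g'(y)(y-x)$ crudely by $g(x)-g(y)$ before balancing, whereas your interpolated cross term already has the same $c^2(g')^2$ structure as the coercive term. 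Your handling of the degenerate case $a\wedge b=0$ is a straightforward case check and matches the paper's (the paper only says ``immediate''). One small caveat on your closing remark: $\ve = \ga/2$ is the natural choice but is not literally forced; any $\ve \in (0,\ga)$ leaves a strictly negative coercive remainder $-(\ga-\ve)\int_0^1 c^2(g')^2$ and a finite cross-term cost $(b-a)^2/\ve$, and $\ve=\ga/2$ simply reproduces the advertised prefactor $\ga/2$ in front of $\big(g(y)-g(x)\big)^2$. Also note, for completeness, that the application of the fundamental theorem of calculus to $t \mapsto c(t)^2 g'(z(t))$ uses the absolute continuity of $g'$ on compact subsets of $(0,\infty)$, which holds under the paper's ``piecewise differentiable'' hypothesis and is the same regularity the paper itself invokes when writing $\int_x^y g''(t)\,\md t = g'(y)-g'(x)$.
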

\begin{proof}
  Since $g$ is non-increasing, the case $a=0$ or $b=0$ is immediate.  In the sequel, we assume that $a \wedge b > 0$.  First, notice that an application of the Cauchy-Schwarz inequality yields for any $x,y > 0$,
  \begin{align*}
    \ga \bigg( \int_x^y\! g'(t)\, \md t \bigg)^{\!\!2}
    \;\leq\;
    \ga \bigg( \int_x^y\! g'(t)^2\, \md t \bigg)
    \bigg( \int_x^y\! 1\, \md t \bigg)
    \;\leq\;
    \bigg( \int_x^y\! g''(t)\, \md t \bigg)
    \bigg( \int_x^y\! 1\, \md t \bigg),
  \end{align*}
  where we used in the second step that $\ga g'(t)^2 \leq g''(t)$ for a.e.\ $t \in (0, \infty)$.  Hence,
  \begin{align}
    \ga\, \big(g(y) - g(x)\big)^2
    \;\leq\;
    \big(g'(y) - g'(x)\big)(y-x).
  \end{align}
  Without loss of generality, assume that $y > x$. Since $g$ is convex and non-increasing, it follows that $0 \leq -g'(y) (y-x) \leq g(x) - g(y)$.  Hence,
  \begin{align*}
    &-\big(b^2 g'(y) - a^2 g'(x) \big)(y-x)
    \\[.5ex]
    &\mspace{36mu}=\;
    -a^2 \big(g'(y) - g'(x) \big)(y-x) \,-\, g'(y)(y-x)(a+b)(b-a)
    \\[.5ex]
    &\mspace{36mu}\leq\;
    -\ga \big(a^2 \wedge b^2\big)\, \big( g(y) - g(x) \big)^2
    \,+\, 2 \big( g(x) - g(y) \big) (a \vee b) |b-a|.
  \end{align*}
  Thus, by applying the Young inequality, that reads $|\al \be| \leq \frac{1}{2}(\ve\, \al^2 + \be^2/\ve)$ for any $\ve > 0$, we obtain
  \begin{align*}
    &-\big(b^2 g'(y) - a^2 g'(x) \big)(y-x)
    \\[.5ex]
    &\mspace{36mu}\leq\;
    - \Big( \ga \big(a^2 \wedge b^2\big) - \ve (a^2 \vee b^2) \Big)\,
    \big( g(y) - g(x) \big)^2
    \,+\,
    \frac{1}{\ve}\, (b-a)^2.
  \end{align*}
  By choosing $\ve = \frac{\ga}{2}(a^2 \wedge b^2) / (a^2 \vee b^2)$, the estimate \eqref{eq:A1} follows.
\end{proof}

\section{Forward and backward equations for the  semigroup} \label{sec:be_fe}
In this section we aim to verify that also in the case of dynamic unbounded conductances the heat kernel of the random walk satisfies the forward equation. We will work in the setting outlined in Section~\ref{sec:setting_intro}. In particular we will suppose throughout that Assumption~\ref{ass:P} holds, which  implies that, $\prob$-a.s., the conductances are local integrable in time, that is
\begin{align}
  \label{eq:local:integrability}
  \prob\!\bigg[ \int_I \om_s(x,y)\, \md s < \infty \bigg]
  \;=\;
  1,
  \qquad\text{for every finite interval } I \subset \bbR.
\end{align}
For any $s \geq 0$, we write $(P_{s,t}^{\om} : t \geq s)$ for the Markov semigroup associated with the random walk $X$, i.e.\ $(P_{s,t}^{\om}f)(x) = \Mean_{s,x}^{\om}[f(X_t)]$ for any bounded function $f\!: \bbZ^d \to \bbR$, $0 \leq s < t$ and $x \in \bbZ^d$. Further, we write $(P_{s,t}^\om)^*$ for the adjoint of $P^\om_{s,t}$ in $\ell^2(\bbZ^d)$. The associated heat kernel is still denoted $p^{\om}(s,x;t,y) \ldef \Prob_{s,x}^{\om}\!\big[X_t = y\big]$ for $x, y \in \bbZ^d$ and $0 \leq s < t$.  As a consequence of \eqref{eq:shift:space-time} we have
\begin{align}\label{eq:shift:hk}
  p^{\tau_{h, z} \om}(s,x;t,y)
  \;=\;
  p^{\om}(s+h,x+z; t+h, y+z).
\end{align}

Next we briefly recall the construction of the time-inhomogeneous Markov process $X$ starting at time $s \geq 0$ in $x \in \bbZ^d$, cf.~\cite[Section~4]{ACDS18}.  Let $(E_n : n \in \bbN)$ be a sequence of independent $\mathop{\mathrm{Exp}}(1)$-distributed random variables.  Further, set $\pi_t^{\om}(x,y) \ldef \om_t(x,y) / \mu_t^{\om}(x) \indicator_{\{(x,y) \in E_d\}}$, where $\mu_t^{\om}(x) \ldef \sum_{y : (x,y) \in E_d} \om_t(x,y)$ for any $t \in \bbR$, $x \in \bbZ^d$.  We specify both the sequence of jump times, $(J_n : n \in \bbN_0)$ and positions, $(Y_n : n \in \bbN_0)$, inductively.  For this purpose, set $J_0 = s$ and $Y_0 = x$.  Suppose that, for any $n \geq 1$, we have already constructed the random variables $(J_0, Y_0, \ldots, J_{n-1}, Y_{n-1})$.  Then, $J_{n}$ is given by
\begin{align*}
  J_{n}
  \;=\;
  J_{n-1} \,+\,
  \inf\bigg\{
    t \geq 0
    \;:\;
    \int_{J_{n-1}}^{J_{n-1} + t} \mu_u^{\om}(Y_{n-1})\, \md u \geq E_n
  \bigg\},
\end{align*}
and at the jump time $J_n$ the distribution of $Y_n$ is given by $\pi_{J_n}^{\om}(Y_{n-1}, \cdot)$.  Since, under Assumption~\ref{ass:P}, $\sup_{n \in \bbN_0} J_n = \infty$, $\prob$-a.s., the Markov process $X$ is given by
\begin{align*}
  X_t \;=\; Y_n \quad \text{on} \quad [J_n, J_{n+1})
  \qquad \forall\, n \in \bbN_0.
\end{align*}
Note that, under $\Prob_{s,x}^{\om}$, we have $J_0 = s$ and $Y_0 = x$ almost surely,  the conditional law of $J_n$ given $(J_0, Y_0, \ldots, J_{n-1}, Y_{n-1})$ (also called survival distribution with time-dependent hazard rate $\mu^{\om}_t(Y_{n-1})$) is
\begin{align*}
  \mu_t^{\om}(Y_{n-1})\, \me^{-\int_{J_{n-1}}^t \mu_u^{\om}(Y_{n-1})\, \md u}\,
  \indicator_{\{t > J_{n-1}\}}\, \md t,
\end{align*}
and the conditional law of $Y_n$ given $(J_0, Y_0, \ldots, J_{n-1}, Y_{n-1}, J_n)$ is $\pi_{J_n}^{\om}(Y_{n-1}, \cdot)$.

Further, for the Markov process $X$ as constructed above  the  strong Markov property holds, an application of which yields that $(P_{s,t}^{\om} : t \geq s)$ satisfies the integrated backward equation, that is, for $\prob$-a.e.\ $\om$,
\begin{align}
  \label{eq:backward:integrated}
  (P_{s,t}^{\om} f)(x)
  \;=\;
  \me^{-\int_s^t \mu_u^{\om}(x)\, \md u} f(x)
  \,+\,
  \int_s^t
    \me^{-\int_s^r \mu_u^{\om}(x)\, \md u}\!
    \sum_{y:(x,y) \in E_d}\mspace{-9mu} \om_r(x,y)\, (P_{r,t}^{\om} f)(y)\,
  \md r
\end{align}
for any $f \in \ell^{\infty}(\bbZ^d)$, $0 \leq s < t < \infty$ and $x \in \bbZ^d$.
\begin{prop} \label{prop:be}
  For $\prob$-a.e.\ $\om$, every  $x, y \in \bbZ^d$  and $f \in \ell^{\infty}(\bbZ^d)$ the following hold.
  \begin{itemize}
  \item[(i)] For every $t > 0$, the map $ s \mapsto p^{\om}(s,x;t,y)$ is differentiable at almost every $s \in (0,t)$.  In particular, $\lim_{s \uparrow t} p^{\om}(s,x;t,y) = p^{\om}(t,x;t,y) = \indicator_y(x)$.

  \item[(ii)]  For every $t > 0$,
    \begin{align*}
      - \partial_s (P_{s,t}^{\om} f)(x)
      \;=\;
      \big(\cL_s^{\om} (P_{s,t}^{\om} f)\big)(x),
      \qquad \text{for a.e.\ $s \in (0,t)$}.
    \end{align*}
    %
  \end{itemize}
\end{prop}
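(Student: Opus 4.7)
The plan is to derive both parts of Proposition~\ref{prop:be} directly from the integrated backward equation~\eqref{eq:backward:integrated} by showing that both sides are absolutely continuous in $s$ and then differentiating termwise via the fundamental theorem of calculus for Lebesgue integrable functions. The key input is the local integrability of $u \mapsto \mu_u^\om(x)$ (and hence of $u \mapsto \om_u(x,z)$) guaranteed by Assumption~\ref{ass:P}-(iii) together with \eqref{eq:local:integrability}, combined with the uniform bound $\|P_{r,t}^\om f\|_{\ell^\infty} \leq \|f\|_{\ell^\infty}$.

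First I would establish part~(i) and the boundary behaviour.  In the right-hand side of~\eqref{eq:backward:integrated}, the prefactor $e^{-\int_s^t \mu_u^\om(x)\, \md u}$ is absolutely continuous in $s \in (0,t)$ because $\int_s^t \mu_u^\om(x)\, \md u$ is.  Rewriting the integral term as
\begin{align*}
  A(s)
  \;\ldef\;
  e^{\int_0^s \mu_u^\om(x)\, \md u}
  \int_s^t g(r)\, \md r,
  \qquad
  g(r)
  \;\ldef\;
  e^{-\int_0^r \mu_u^\om(x)\, \md u}
  \sum_{z : (x,z) \in E_d}\mspace{-9mu}
  \om_r(x,z)\, (P_{r,t}^\om f)(z),
\end{align*}
we observe that $|g(r)| \leq \|f\|_{\ell^\infty}\, \mu_r^\om(x)$ is locally integrable, so $s \mapsto \int_s^t g(r)\, \md r$ is absolutely continuous; multiplication by the absolutely continuous, locally bounded factor $e^{\int_0^s \mu_u^\om(x)\, \md u}$ preserves this property.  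Letting $s \uparrow t$ in~\eqref{eq:backward:integrated}, the prefactor tends to $1$ while $\int_s^t g(r)\, \md r \to 0$, whence $(P_{s,t}^\om f)(x) \to f(x)$.  Specializing to $f = \indicator_y$ yields $p^\om(s,x;t,y) \to \indicator_y(x)$ as $s \uparrow t$, and the a.e.\ differentiability of $s \mapsto p^\om(s,x;t,y)$ follows from the absolute continuity just established, proving~(i).

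For part~(ii), I would differentiate~\eqref{eq:backward:integrated} at any $s \in (0,t)$ which is simultaneously a Lebesgue point for $u \mapsto \mu_u^\om(x)$, for $r \mapsto g(r)$, and for $u \mapsto \om_u(x,z)$ with $(x,z) \in E_d$ (the complement of such points has measure zero).  The first term of~\eqref{eq:backward:integrated} contributes $\mu_s^\om(x)\, e^{-\int_s^t \mu_u^\om(x)\, \md u}\, f(x)$.  For $A(s)$, the product rule combined with the fundamental theorem of calculus gives
\begin{align*}
  \partial_s A(s)
  \;=\;
  \mu_s^\om(x)\, A(s)
  \,-\, e^{\int_0^s \mu_u^\om(x)\, \md u}\, g(s)
  \;=\;
  \mu_s^\om(x)\, A(s)
  \,-\sum_{z : (x,z) \in E_d}\mspace{-9mu} \om_s(x,z)\, (P_{s,t}^\om f)(z).
\end{align*}
Summing both contributions, the $\mu_s^\om(x)$-terms recombine into $\mu_s^\om(x)\, (P_{s,t}^\om f)(x)$, so
\begin{align*}
  \partial_s (P_{s,t}^\om f)(x)
  \;=\;
  \mu_s^\om(x)\, (P_{s,t}^\om f)(x)
  \,-\sum_{z : (x,z) \in E_d}\mspace{-9mu} \om_s(x,z)\, (P_{s,t}^\om f)(z)
  \;=\;
  -\big(\cL_s^\om (P_{s,t}^\om f)\big)(x),
\end{align*}
which is the backward equation.

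The main technical obstacle is the lack of time regularity of $\om$: the conductances are only locally integrable in $t$, so classical pointwise differentiation of~\eqref{eq:backward:integrated} is unavailable, and we must work throughout with Lebesgue's differentiation theorem.  The only non-trivial step is verifying that the integrand $g(r)$ is itself locally integrable, which follows from the uniform $\ell^\infty$-bound on $P_{r,t}^\om f$ together with the pointwise estimate $\sum_z \om_r(x,z) = \mu_r^\om(x) \in L^1_{\mathrm{loc}}$; beyond this, the argument is a clean application of the product rule to the integrated semigroup identity.
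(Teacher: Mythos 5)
Your proof is correct and follows essentially the same route as the paper: both start from the integrated backward equation~\eqref{eq:backward:integrated}, establish absolute continuity of $s \mapsto (P^\om_{s,t}f)(x)$ from local integrability of $\mu^\om_\cdot(x)$, and then obtain the differential backward equation by an application of the chain and product rule at Lebesgue points. The only difference is one of presentation: the paper verifies absolute continuity directly via the $\ve$--$\de$ criterion and delegates the differentiation step to \cite[Theorem~6.3.6]{Co13}, whereas you rewrite the integral term as $e^{\int_0^s \mu^\om_u(x)\,\md u}\int_s^t g(r)\,\md r$ with $|g(r)|\le\|f\|_\infty\mu^\om_r(x)$ and carry out the Lebesgue-point calculation explicitly, which is a somewhat more self-contained but substantively equivalent variant.
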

\begin{proof}
  (i) We will show that for every $t > 0$, $x, y \in \bbZ^d$ and $\prob$-a.e.\ $\om$ the mapping $[0,t) \ni s \mapsto p^{\om}(s,x;t,y)$ is absolute continuous, which implies (i).  For that purpose, fix some $f \in \ell^{\infty}(\bbZ^d)$ and $t > 0$.  Since, for every $x \in \bbZ^d$, the map $t \mapsto \mu_t^{\om}(x)$ is $\prob$-a.s.\ locally integrable by \eqref{eq:local:integrability}, the absolute continuity of the Lebesgue integral implies that, for every $\ve > 0$, there exists $\de \equiv \de(x) > 0$ such that
  \begin{align*}
    \int_D \mu^{\om}_u(x)\, \md u
    \;\leq\;
    \frac{\ve}{2 \|f\|_{\infty}}
    \qquad \forall\, D \in \cB(\bbR)
    \text{ with Lebesgue measure less than } \de.
  \end{align*}
  Then, by using the integrated backward equation \eqref{eq:backward:integrated} and the Cauchy-Schwarz inequality,
  \begin{align*}
    &\sum_{i=1}^n\,
    \big|(P_{s_i,t}^{\om} f)(x) - (P_{r_i,t}^{\om}f)(x)\big|
    \\
    &\mspace{36mu}\leq\; 
    2 \|f\|_{\infty}\,
    \sum_{i=1}^n
    \Big(1 - \me^{-\int_{r_i}^{s_i} \mu_u^{\om}(x)\, \md u} \Big)
    \;\leq\;
    2 \|f\|_{\infty}\,
    \bigg(\int_D \mu_u^{\om}(x)\, \md u \bigg)
    \;<\;
    \ve
  \end{align*}
  for any union $D = \bigcup_{i=1}^n (r_i, s_i)$ of pairwise disjoint intervals $(r_i, s_i) \subset [0,t]$ of total length less than $\de$.

  (ii) We rewrite the right-hand side of \eqref{eq:backward:integrated} as
  \begin{align*}
    \me^{-\int_s^t \mu_u^{\om}(x)\, \md u}\, f(x)
    \,+\,
    \me^{\int_0^s \mu_u^{\om}(x)\, \md u}\,
    \int_s^t
    \me^{-\int_0^r \mu_u^{\om}(x)\, \md u}
    \sum_{y:(x,y) \in E^d}\mspace{-9mu} \om_r(x,y) (P_{r,t}^{\om} f)(y)\,
    \md r.
  \end{align*}
  Since $t \mapsto \mu_t^{\om}(x)$ is locally integrable, the differential form of the backward equation in weak sense follows from \cite[Theorem~6.3.6]{Co13} together with an application of the chain and product rule.
  %
%
\end{proof}
\begin{lemma} \label{lem:time_inverse}
  Define $\tilde{\om}_t(e) \ldef \om_{-t}(e)$ for any $t \in \bbR$ and $e \in E_d$.  Then,
  \begin{align}\label{eq:hk:time_inverse} 
    p^{\om}(0,x;t,y)
    \;=\;
    p^{\tilde{\om}}(-t, y; 0,x),
    \qquad \forall\, x, y \in \bbZ^d, \quad t \geq 0.
  \end{align}
\end{lemma}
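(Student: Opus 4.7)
The underlying idea is time-reversal.  Since $\cL^\om_t$ is self-adjoint with respect to the counting measure (which is therefore invariant), running the process backward in time while simultaneously reversing the conductances should produce the same transition probabilities.  To implement this, the plan is to view both sides of~\eqref{eq:hk:time_inverse} as functions of $(t,y) \in [0,\infty) \times \bbZ^d$ with $x$ fixed, show that they solve the same parabolic Cauchy problem, and then conclude by uniqueness.

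Concretely, set $f(t,y) \ldef p^\om(0,x;t,y)$ and $g(t,y) \ldef p^{\tilde\om}(-t,y;0,x)$ for $t \geq 0$.  By Proposition~\ref{prop:be}(i), $f(0,y) = g(0,y) = \indicator_x(y)$.  Combining Proposition~\ref{prop:be}(ii) with the Chapman--Kolmogorov identity $P^\om_{0,t+h} = P^\om_{0,t}\, P^\om_{t,t+h}$, one obtains $\partial_t P^\om_{0,t} = P^\om_{0,t}\, \cL^\om_t$ for a.e.\ $t$, and the symmetry of the conductances $\om_t(\{y,z\}) = \om_t(\{z,y\})$ then yields the matrix-element form of the forward equation
\begin{equation*}
  \partial_t f(t,y) \;=\; \cL^\om_t f(t, \cdot)(y),
  \qquad \text{for a.e.\ }t > 0,
\end{equation*}
where the generator acts on the $y$-variable.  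For $g$, apply Proposition~\ref{prop:be}(ii) directly to the map $(s,y) \mapsto p^{\tilde\om}(s,y;0,x)$ to obtain $-\partial_s p^{\tilde\om}(s,y;0,x) = \cL^{\tilde\om}_s p^{\tilde\om}(s,\cdot;0,x)(y)$.  The change of variable $s=-t$, combined with $\cL^{\tilde\om}_{-t} = \cL^\om_t$, then leads to $\partial_t g(t,y) = \cL^\om_t g(t, \cdot)(y)$ for a.e.\ $t > 0$.

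It remains to establish uniqueness.  The difference $w \ldef f - g$ satisfies $\partial_t w = \cL^\om_t w$ with $w(0, \cdot) \equiv 0$.  Since both $f(t, \cdot)$ and $g(t, \cdot)$ are sub-probability densities, $|w(t,y)| \leq f(t,y) + g(t,y)$ is summable in $y$, so $w(t, \cdot) \in \ell^1(\bbZ^d) \cap \ell^\infty(\bbZ^d) \subset \ell^2(\bbZ^d)$.  Summation by parts then gives the energy identity
\begin{equation*}
  \frac{\md}{\md t}\, \Norm{w(t)}{\ell^2(\bbZ^d)}^2
  \;=\;
  2\, \scpr{w(t)}{\cL^\om_t w(t)}{\bbZ^d}
  \;=\;
  -2\, \cE^\om_t\big(w(t)\big)
  \;\leq\;
  0,
\end{equation*}
which together with $\Norm{w(0)}{\ell^2(\bbZ^d)} = 0$ forces $w \equiv 0$.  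The main technical subtlety is to justify the various differentiations and in particular the interchange of $\partial_t$ with the infinite sum in the energy identity, in the present unbounded and time-inhomogeneous setting; this is handled by dominated convergence, exploiting the absolute continuity already established in Proposition~\ref{prop:be}(i) together with the local integrability~\eqref{eq:local:integrability} of the conductances.
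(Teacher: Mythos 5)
Your starting point is the right one: time reversal together with $\cL^{\tilde\om}_{-t} = \cL^\om_t$ and the self-adjointness of $\cL^\om_t$ in $\ell^2$. The $g$-side of your argument is sound, since $g(t,y) = p^{\tilde\om}(-t,y;0,x) = (P^{\tilde\om}_{-t,0}\indicator_x)(y)$ is exactly of the form to which Proposition~\ref{prop:be}(ii) applies. The route you take afterward, however, is genuinely different from the paper's, and it has two gaps that you should not wave away.

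First, the $f$-side. You need the \emph{forward} (Fokker--Planck) equation $\partial_t p^{\om}(0,x;t,y) = \cL^{\om}_t p^{\om}(0,x;t,\cdot)(y)$, with the generator acting in $y$ and the derivative in $t$. This is not what Proposition~\ref{prop:be}(ii) provides: that proposition differentiates $P^\om_{s,t}$ in the \emph{first} time argument $s$, with the generator landing on the \emph{first} spatial argument. Your proposed bridge via Chapman--Kolmogorov, $\partial_t P^\om_{0,t} = P^\om_{0,t}\lim_{h\to 0}\tfrac{1}{h}(P^\om_{t,t+h}-I)$, requires identifying $\lim_{h\to 0}\tfrac{1}{h}(P^\om_{t,t+h}-I)$ with $\cL^\om_t$ and then commuting this limit with the action of $P^\om_{0,t}$ on $\ell^1$; with conductances that are only locally integrable in $t$ and unbounded in $\om$ this is not immediate, and in any case it is a different derivative than the one controlled by the backward equation (one differentiates in the final time, the other in the initial time, at the diagonal). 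Crucially, the paper only establishes the forward equation \emph{from} Lemma~\ref{lem:time_inverse} (see Proposition~\ref{prop:forwardeq}), so as written your argument risks circularity unless you supply an independent and careful proof of the forward equation.

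Second, the $\ell^2$-uniqueness step. The identity $\frac{\md}{\md t}\Norm{w_t}{\ell^2}^2 = -2\cE^\om_t(w_t)$ on the infinite lattice requires (a) that $w_t$ lie in the form domain, i.e.\ $\cE^\om_t(w_t) = \sum_e \om_t(e)(\nabla w_t(e))^2 < \infty$, which is not implied by $w_t \in \ell^1\cap\ell^\infty$ when $\om_t$ is unbounded; (b) that the summation by parts $\scpr{w_t}{\cL^\om_t w_t}{\bbZ^d} = -\cE^\om_t(w_t)$ be valid, which again needs absolute convergence of the rearranged sums; and (c) that $\partial_t$ can be interchanged with the infinite sum. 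Citing dominated convergence and the absolute continuity from Proposition~\ref{prop:be}(i) does not address (a) or (b) at all. These are exactly the pitfalls that the paper's proof sidesteps by first working with the process killed upon exiting $B_n$: on the finite set $B_n$ all sums are finite, $\cL^\om_s$ restricted with Dirichlet boundary conditions is a symmetric matrix, and the identity $\partial_s\scpr{P^{\tilde\om,n}_{-s,0}g}{P^{\om,n}_{s,t}f}{B_n} = 0$ follows from two applications of the backward equation alone; one then integrates over $s\in[0,t]$ and lets $n\to\infty$ by monotone convergence. If you want to pursue the energy route, you would need to perform it on $B_n$ first and pass to the limit, at which point you have essentially reproduced the paper's argument; as stated, your proof does not close.
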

\begin{proof}
  Write $B_n \ldef B(0,n)$ and $\tau_{B_n} \ldef \inf\{t \geq 0: X_t \in B_n^c \}$ with $\inf \emptyset \ldef \infty$.  We denote by $p^{\om,n}(s,x;t,y) \ldef \Prob_{s,x}^{\om}\!\big[X_t = y, \, t<\tau_{B_n} \big]$ the heat kernel associated with the process $X$ killed upon exiting $B_n$, and we write $(P_{s,t}^{\om,n}: t \geq s)$ for the transition semigroup.  Recall that the associated time-dependent generator, still denoted by $\cL^{\om}_t$, is acting on functions with Dirichlet boundary condition.  By similar arguments as in Proposition~\ref{prop:be} one can establish a backward equation for $(P_{s,t}^{\om,n} : t \geq s)$, which gives
  \begin{align*}
    &\partial_s \scpr{P_{-s,0}^{\tilde{\om},n} g}{P_{s,t}^{\om, n} f}{B_n}
    \\[1ex]
    &\mspace{36mu}=\;
    \scpr{\cL_{-s}^{\tilde{\om}} P_{-s,0}^{\tilde{\om},n} g}
         {P_{s,t}^{\om,n} f}{B_n} 
    -
    \scpr{ P_{-s,0}^{\tilde{\om},n} g }{\cL_s^{\om} P_{s,t}^{\om,n} f}{B_n}
    \;=\;
    0,
  \end{align*}
  where we used in the last step that $\cL_{-s}^{\tilde{\om}} = \cL^{\om}_s$.  By integration over $[0,t]$ we get
  \begin{align*}
    \scpr{P_{-t,0}^{\tilde{\om},n} g}{f}{B_n}
    - \scpr{g}{P_{0,t}^{\om,n} f}{B_n}
    \;=\;
    0,
  \end{align*}
  and by choosing $f = \indicator_{\{y\}}$ and $g = \indicator_{\{x\}}$ we obtain $p^{\tilde{\om},n}(-t,y;0,x) = p^{\om,n}(0,x;t,y)$.  Finally, since $\lim_{n \to \infty} p^{\om,n}(s,x;t,y) = p^{\om}(s,x;t,y)$ for all $x, y \in \bbZ^d$, $t \geq s$ and $\om \in \Omega$, the result follows by taking the limit $n \to \infty$.
\end{proof}
\begin{prop} \label{prop:forwardeq}
  For $\prob$-a.e.\ $\om$, every $x \in \bbZ^d$ and finitely supported $f\!:\bbZ^d \to \bbR$, the map $ t \mapsto (P_{0,t}^{\om}f)(x)$ is differentiable at almost every $t \in (0,\infty)$ and
  \begin{align}\label{eq:diff:forward}
    \partial_t (P_{0,t}^{\om} f)(x)
    \;=\;
    \big(\cL_t^{\om} (P_{0,t}^{\om} f)\big)(x),
    \qquad \text{for a.e. } t \in (0,\infty).
  \end{align}
  In particular, for $\prob$-a.e.\ $\om$, the function $(t,x) \mapsto u(t,x) = p^{\om}(0,0;t,x)$ solves
  \begin{align*}
    \partial_t u(t,x) \;=\; (\cL_t^{\om} u(t,\cdot))(x),
    \qquad \forall\, x \in \bbZ^d \text{ and a.e. } t \in (0,\infty).
  \end{align*}
\end{prop}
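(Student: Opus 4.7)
The plan is to deduce the forward equation from the backward equation of Proposition~\ref{prop:be}(ii) by invoking the time-reversal identity of Lemma~\ref{lem:time_inverse}. Setting $\tilde{\om}_s(e) \ldef \om_{-s}(e)$, one sees that $\tilde{\om}$ inherits Assumption~\ref{ass:P} from $\om$, so Proposition~\ref{prop:be}(ii) is available for the process with conductances $\tilde{\om}$; crucially $\cL_{-t}^{\tilde{\om}} = \cL_t^{\om}$, since $\tilde{\om}_{-t}(e) = \om_t(e)$.

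Concretely, I would fix $x \in \bbZ^d$ and apply Proposition~\ref{prop:be}(ii) to $\tilde{\om}$ with terminal time $T=0$ and test function $\indicator_{\{x\}}$. Using that $(P^{\tilde{\om}}_{s,0}\indicator_{\{x\}})(y) = p^{\tilde{\om}}(s,y;0,x)$, this gives, for $\prob$-a.e.\ $\om$ and a.e.\ $s < 0$,
\begin{align*}
  -\partial_s\, p^{\tilde{\om}}(s,y;0,x)
  \;=\;
  \big(\cL_s^{\tilde{\om}}\big)_{\!y}\, p^{\tilde{\om}}(s,y;0,x),
\end{align*}
where the subscript $y$ indicates that the generator acts on the first spatial variable of the heat kernel. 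Substituting $s = -t$ and applying Lemma~\ref{lem:time_inverse} together with $\cL_{-t}^{\tilde{\om}} = \cL_t^{\om}$ produces the Kolmogorov forward equation for the heat kernel,
\begin{align}\label{eq:plan:kfe}
  \partial_t\, p^{\om}(0,x;t,y)
  \;=\;
  \big(\cL_t^{\om}\big)_{\!y}\, p^{\om}(0,x;t,y),
  \qquad \text{for a.e. } t > 0.
\end{align}
Setting $x = 0$ in \eqref{eq:plan:kfe} immediately yields the in-particular statement for $u(t,y) = p^{\om}(0,0;t,y)$. For the identity involving a finitely supported $f$, I would write $(P_{0,t}^{\om} f)(x) = \sum_y p^{\om}(0,x;t,y) f(y)$ as a finite sum, differentiate under the summation, apply \eqref{eq:plan:kfe} termwise, and rearrange using the symmetry of the conductances and the self-adjointness of $\cL_t^{\om}$ with respect to the counting measure; the interchange of sum and derivative is harmless since $f$ has finite support.

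The main obstacle, already largely addressed in Proposition~\ref{prop:be}(i), is to control the exceptional null set of $t$'s on which \eqref{eq:plan:kfe} might fail, since a priori this set could depend on the pair $(x,y)$. The remedy is to apply Proposition~\ref{prop:be}(i) for the $\tilde{\om}$ process (transported via Lemma~\ref{lem:time_inverse}) for each of the countably many pairs $(x,y) \in \bbZ^d \times \bbZ^d$, take the corresponding countable union of $\prob$-null sets, and work on the resulting full-measure event; there the identity holds simultaneously for all $(x,y)$, which is exactly what is required for both the heat kernel statement and the finitely supported $f$ statement. The joint measurability provided by Assumption~\ref{ass:P}-(ii) ensures that these measurability considerations are legitimate.
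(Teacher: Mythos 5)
Your proposal is correct and follows essentially the same route as the paper's own proof: both deduce the forward equation from the backward equation of Proposition~\ref{prop:be}(ii) applied to the time-reversed environment $\tilde\om$, pass through the identity $p^{\om}(0,x;t,y)=p^{\tilde\om}(-t,y;0,x)$ of Lemma~\ref{lem:time_inverse} together with $\cL_{-t}^{\tilde\om}=\cL_t^{\om}$, and invoke the self-adjointness of $\cL_t^{\om}$. The only (cosmetic) difference is that you argue at the kernel level and then sum against $f$, whereas the paper differentiates the duality pairing $\scpr{f}{P_{-t,0}^{\tilde\om}\indicator_{x}}{\bbZ^d}$ directly; one small point worth noting is that your rearrangement, like the paper's own chain of equalities, actually produces $\partial_t(P_{0,t}^{\om}f)(x)=\big(P_{0,t}^{\om}(\cL_t^{\om}f)\big)(x)$, i.e.\ the generator applied before the semigroup, and the form $(\cL_t^{\om}(P_{0,t}^{\om}f))(x)$ in the display \eqref{eq:diff:forward} appears to be a misprint, although the heat-kernel statement (the actual content used later) follows correctly in both treatments via one further application of self-adjointness.
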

\begin{proof}
  This follows from the backward equation  in Proposition~\ref{prop:be} and Lemma~\ref{lem:time_inverse}. Indeed, let $\tilde \om$ be defined as in Lemma~\ref{lem:time_inverse}, then we have for any $f,g \in \ell^2(\bbZ^d)$, 
  \begin{align*}
    \scpr{P_{0,t}^{\om} f}{g}{\bbZ^d}
    \;=\;
    \scpr{f}{(P_{0,t}^{\om})^*g}{\bbZ^d}
    \overset{\!\!\eqref{eq:hk:time_inverse}\!\!}{\;=\;}
    \scpr{f}{P_{-t,0}^{\tilde{\om}}g}{\bbZ^d}.
  \end{align*}
  Thus, for any finitely supported $f\!: \bbZ^d \to \bbR$ and $g = \indicator_x$, we obtain
  \begin{align*}
    \partial_t (P_{0,t}^{\om} f)(x)
    &\;=\;
    \lim_{h \to 0} \frac{1}{h}
    \scpr{P_{0,t+h}^{\om} f - P_{0,t}^{\om} f}{g}{\bbZ^d}
    \\
    &\;=\;
    \lim_{h \to 0} \frac{1}{h}
    \scpr{f}{P_{-(t+h),0}^{\tilde{\om}} g - P_{-t,0}^{\tilde{\om}} g}{\bbZ^d}
    \;=\;
    \scpr{f}{\partial_t P_{-t,0}^{\tilde{\om}} g}{\bbZ^d}.
  \end{align*}
  Hence, by using the differential backward equation, we get
  \begin{align*}
    \partial_t (P_{0,t}^{\om} f)(x)
    \;=\;
    \scpr{f}{\cL_{-t}^{\tilde{\om}}(P_{-t,0}^{\tilde{\om}} g)}{\bbZ^d}
    \;=\;
    \scpr{f}{\cL_t^{\om} ((P_{0,t}^{\om})^*g)}{\bbZ^d}
    \;=\;
    P_{0,t}^{\om}(\cL_t^{\om} f)(x),
  \end{align*}
  which yields \eqref{eq:diff:forward}.  Finally, consider the function $u(t,x) \ldef p^{\om}(0,0;t,x)$.  Then, by applying \eqref{eq:diff:forward}, we find that
  \begin{align*}
    \partial_t u(t,x)
    \;=\;
    \partial_t (P_{0,t}^{\om} \indicator_x)(0)
    &\;=\;
    P_{0,t}^{\om}(\cL_t^{\om} \indicator_x)(0)
    \\
    &\;=\;
    \scpr{u(t,\cdot)}{\cL_t^{\om} \indicator_x}{\bbZ^d}
    \;=\;
    \big(\cL_t^{\om} u(t,\cdot)\big)(x),
  \end{align*}
  which concludes the proof.
\end{proof}
\bibliographystyle{abbrv}
\bibliography{literature}

\end{document}